\newcommand{\codim}{{\rm codim}}
\newcommand{\sJ}{{\mathcal J}}
\newtheorem{lemma1}{}[section]
\newenvironment{lemma}{\begin{lemma1}{\bf Lemma.}}{\end{lemma1}}
\newenvironment{example}{\begin{lemma1}{\bf Example.}\rm}{\end{lemma1}}
\newenvironment{theorem}{\begin{lemma1}{\bf Theorem.}}{\end{lemma1}}
\newenvironment{proposition}{\begin{lemma1}{\bf Proposition.}}{\end{lemma1}}
\newenvironment{corollary}{\begin{lemma1}{\bf Corollary.}}{\end{lemma1}}
\newenvironment{remark}{\begin{lemma1}{\bf Remark.}\rm}{\end{lemma1}}
\newenvironment{definition}{\begin{lemma1}{\bf Definition.}}{\end{lemma1}}
\newenvironment{conjecture}{\begin {lemma1}{\bf Conjecture.}}{\end{lemma1}}
\newenvironment{question}{\begin{lemma1}{\bf Question.}}{\end{lemma1}}
\newenvironment{problem}{\begin{lemma1}{\bf Problem.}}{\end{lemma1}}
\newenvironment{remark*}{{\bf Remark.}}{}
\newenvironment{example*}{{\bf Example.}}{}
\newenvironment{assumption*}{{\bf Assumption.}}{}
\newcommand{\R}{\ensuremath{\mathbb{R}}}
\newcommand{\Q}{\ensuremath{\mathbb{Q}}}
\newcommand{\Z}{\ensuremath{\mathbb{Z}}}
\newcommand{\N}{\ensuremath{\mathbb{N}}}
\newcommand{\PP}{\ensuremath{\mathbb{P}}}
\newcommand{\merom}[3]{\ensuremath{#1:#2 \dashrightarrow #3}}
\newcommand{\holom}[3]{\ensuremath{#1:#2  \rightarrow #3}}
\newcommand{\fibre}[2]{\ensuremath{#1^{-1} (#2)}}
\newcommand\sF{{\mathcal F}}
\newcommand\sI{{\mathcal I}}
\newcommand\sO{{\mathcal O}}
\newcommand\bN{{\mathbb N}}
\DeclareMathOperator*{\Pic0}{Pic^0}
\DeclareMathOperator*{\supp}{Supp}
\newcommand{\Null}[1]{\ensuremath{\mbox{Null}(#1)}}
\DeclareMathOperator{\nd}{nd}
\DeclareMathOperator{\DIV}{div}
\DeclareMathOperator{\rank}{rank}
\newcommand{\NE}[1]{ \ensuremath{ \overline { \mbox{NE} }(#1)} }
\title{Rational curves on compact K\"ahler manifolds} 
\date{October 25, 2017}
\subjclass[2000]{32J27, 14E30, 14J35, 14J40, 14M22, 32J25}
\keywords{MMP, rational curves, K\"ahler manifolds, relative adjoint classes, subadjunction}
\author{Junyan Cao}
\author{Andreas H\"oring}
\address{Junyan Cao, Institut de Math\'ematiques de Jussieu, Universit\'e Pierre et Marie Curie, Case 247, 4 Place Jussieu, 75252 Paris Cedex, France}
\email{Junyan.CAO@imj-prg.fr}
\address{Andreas H\"oring, Laboratoire de Math{\'e}matiques J.A. Dieudonn{\'e},
UMR 7351 CNRS, Universit{\'e} de Nice Sophia-Antipolis, 06108 Nice Cedex 02, France        
}
\email{hoering@unice.fr}
\begin{document}

\begin{abstract} 
Mori's theorem yields the existence of rational curves on projective manifolds
such that the canonical bundle is not nef. In this paper we study 
compact K\"ahler manifolds such that the canonical bundle
is pseudoeffective, but not nef. We present an inductive argument for the
existence of rational curves that uses neither deformation theory nor reduction
to positive characteristic. 
The main tool for this inductive strategy is a weak subadjunction formula for lc centres associated
to certain big cohomology classes. 
\end{abstract}

\maketitle

\section{Introduction}

\subsection{Main results}

Rational curves have played an important role in the classification theory of projective manifolds ever since Mori showed
that they appear as a geometric obstruction to the nefness of the canonical bundle.

\begin{theorem} \cite{Mor79, Mor82}
Let $X$ be a complex projective manifold such that the canonical bundle $K_X$ is not nef. Then there exists a rational curve
$C \subset X$ such that $K_X \cdot C<0$.
\end{theorem}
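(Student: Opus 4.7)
The proof I have in mind is Mori's original one: reduce to positive characteristic, use Frobenius to inflate the negativity of $K_X \cdot C$, apply bend-and-break to produce a rational curve of bounded degree, and lift the existence back to characteristic zero.

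First, since $K_X$ is not nef I would pick an irreducible curve $C_0 \subset X$ with $K_X \cdot C_0 < 0$, fix an ample divisor $H$ on $X$, and consider the normalization $f_0 : \tilde C \to X$ of $C_0$, of genus $g$. The main tool is the \emph{bend-and-break} principle: if a morphism $f : C \to X$ from a smooth projective curve admits a non-trivial $1$-parameter deformation that fixes two distinct points of $X$, then $f_*[C]$ is numerically equivalent to a non-zero effective $1$-cycle containing a rational component. The input is the deformation-theoretic estimate
\[
\dim_{[f]}\text{Hom}(C,X;\, p\mapsto x) \;\geq\; -K_X\cdot f_*[C] \;+\; n(1-g) \;-\; n,
\]
where $n = \dim X$; so one rational curve falls out as soon as $-K_X \cdot f_*[C]$ exceeds $ng$ by a definite amount.

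Second, this threshold cannot be reached in characteristic zero by precomposing with a cover $C' \to C$, since the genus grows in step with the degree. The remedy is to spread $X$, $H$ and $f_0$ out over a finitely generated subring $R \subset \C$, and for a closed point $s \in \operatorname{Spec} R$ of residue characteristic $p > 0$ replace $f_{0,s}$ by $f_{0,s} \circ F^m$, where $F : \tilde C_s \to \tilde C_s$ is the absolute Frobenius. This multiplies the degree against $-K_X$ by $p^m$ while leaving the genus unchanged, so for $m \gg 0$ the dimension estimate above is arbitrarily large and bend-and-break yields a rational curve $R_s \subset X_s$ with $K_{X_s} \cdot R_s < 0$.

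Third, a quantitative form of bend-and-break gives a degree bound
\[
H \cdot R_s \;\leq\; 2n\,\frac{H \cdot C_0}{-K_X \cdot C_0},
\]
which is uniform in $s$ and $m$. The relative Chow/Hilbert scheme parametrizing rational curves on $X/\operatorname{Spec} R$ of $H$-degree below this bound, with $K_X$-degree negative, is of finite type over $\operatorname{Spec} R$; since it has points over a dense set of closed fibres, it has a point over the generic fibre, producing the desired rational curve $C \subset X$ with $K_X \cdot C < 0$.

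The main obstacle is the characteristic-$p$ step: not merely writing down the Frobenius twist, but extracting an \emph{actual} rational component from a family of deformations (bend-and-break as a geometric construction, not just a dimension count) and controlling its degree by a bound that depends only on the data on $X$, not on $p$ or $m$. Once this uniform degree bound is in hand, the spreading-out descent to characteristic zero is essentially a Chevalley-type argument on the parameter space.
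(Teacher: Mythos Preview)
The paper does not give its own proof of this statement: it is quoted as background, with citations to \cite{Mor79, Mor82}, and serves only to motivate the paper's main results in the K\"ahler setting. So there is no ``paper's proof'' to compare against.

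That said, your outline is exactly Mori's original argument, and the sketch is accurate. The dimension estimate for $\mathrm{Hom}(C,X;\, p\mapsto x)$, the Frobenius amplification to beat the genus term, the bend-and-break step producing a rational component, the uniform degree bound $H\cdot R_s \le 2n\,\dfrac{H\cdot C_0}{-K_X\cdot C_0}$, and the descent via a finite-type parameter space are all correctly identified. One small refinement worth making explicit when you write it out in full: the version of bend-and-break you need is the two-pointed one (fix the images of two distinct points of $\tilde C_s$), and the degree bound comes from iterating the breaking step to produce a rational curve through a \emph{chosen} point with $-K_X$-degree at most $n+1$ (or $2n$ in the looser form you state), which is what makes the bound independent of $p$ and $m$. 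This is essentially what you flag as the ``main obstacle,'' and you have the right shape of the solution.
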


This statement was recently generalised to compact K\"ahler manifolds of dimension three \cite{HP16},
but the proof makes crucial use of results on deformation theory of curves on threefolds which 
are not available in higher dimension. Mori's proof uses
a reduction to positive characteristic in an essential way and thus does not adapt to the more general analytic setting. 
The aim of this paper is to develop a completely different, inductive approach to the existence
of rational curves. Our starting point is the following

\begin{conjecture} \label{conjectureBDPP}
Let $X$ be a compact K\"ahler manifold. Then the canonical class $K_X$ is pseudoeffective if and only if $X$ is not uniruled (i.e.
not covered by rational curves).
\end{conjecture}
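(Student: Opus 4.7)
The easy direction is standard. Assume $X$ is covered by rational curves and pick a covering family $\{C_t\}_{t\in T}$ passing through a very general point. For a very general member, the normalization $\nu \colon \PP^1 \to C_{t_0}$ satisfies that $\nu^* T_X$ is nef, contains $T_{\PP^1}$ as a subsheaf, and hence has degree $\geq 2$; equivalently $K_X \cdot C_{t_0} \leq -2$. The class $[C_{t_0}]$ lies in the movable cone of curves, and since the movable cone pairs nonnegatively with pseudoeffective $(1,1)$-classes, $K_X$ cannot be pseudoeffective.

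For the hard direction, my plan is to emulate the projective argument of Boucksom--Demailly--P\u{a}un--Peternell. The first ingredient is the K\"ahler duality theorem: the pseudoeffective cone in $H^{1,1}_{BC}(X,\bR)$ is dual, under the natural pairing, to the cone generated by classes of the form $\pi_*(\tilde\omega_1 \wedge \cdots \wedge \tilde\omega_{n-1})$, where $\pi \colon \tilde X \to X$ runs through K\"ahler modifications and the $\tilde \omega_i$ are K\"ahler classes on $\tilde X$. Granting this duality, the assumption that $K_X$ is not pseudoeffective produces a movable class $\alpha$ with $K_X \cdot \alpha < 0$. The second ingredient is a mechanism that converts this purely numerical statement into an actual uniruling: a covering family of rational curves through a general point of $X$.

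The second step is the genuine obstacle. Mori's bend-and-break, which does this job in the algebraic case, reduces modulo a prime $p$ and uses Frobenius iterates to force a high-degree curve to degenerate into a chain containing a rational component through any prescribed point. This device has no known analogue on a non-algebraic compact K\"ahler manifold: analytic substitutes via harmonic maps from $\PP^1$, $L^2$ extension from positive currents, or limit arguments on families of analytic cycles have not so far yielded rational curves under only the hypothesis that some movable class is $K_X$-negative. A conditional route would be to first show that such $X$ is automatically projective---for instance by analysing its algebraic reduction or by producing enough meromorphic functions from the movable class---and then invoke the projective BDPP theorem; but this projectivity is essentially as hard as the conjecture itself and is currently known only in dimension $\leq 3$ via \cite{HP13a}. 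Consequently, any complete proof must either develop a K\"ahler characteristic-zero bend-and-break or effect such a projectivity reduction, and it is precisely to build up inductive tools relevant to this circle of problems (in the intermediate regime where $K_X$ is pseudoeffective but not nef) that the present paper's subadjunction machinery is designed.
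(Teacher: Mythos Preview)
The statement you are addressing is labelled as a \emph{conjecture} in the paper, and the paper does not prove it. It is stated as an open problem (known for projective manifolds by \cite{BDPP13} and for compact K\"ahler threefolds by Brunella \cite{Bru06}) and is then used as an inductive hypothesis: Theorem \ref{theoremmain} assumes Conjecture \ref{conjectureBDPP} in dimensions $\leq n-1$ in order to produce $K_X$-negative rational curves on $n$-folds with $K_X$ pseudoeffective but not nef. There is therefore no ``paper's own proof'' to compare against.

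Your write-up reflects this correctly: you prove the easy direction, sketch the BDPP duality strategy for the hard direction, and then explain honestly that the rational-curve production step (bend-and-break or a projectivity reduction) is the genuine gap in the K\"ahler setting. That assessment is accurate, and your closing remark---that the paper's subadjunction machinery is aimed at the adjacent problem where $K_X$ is pseudoeffective but not nef, rather than at Conjecture \ref{conjectureBDPP} itself---is also correct. One minor point: your argument for the easy direction via free rational curves through a very general point is fine in the projective setting, but in the K\"ahler case one should be a bit more careful about what ``movable cone of curves'' means and why it pairs nonnegatively with pseudoeffective $(1,1)$-classes; the cleanest route is the one in \cite{BDPP13} or \cite{Bou04}, using that a covering family gives a nonzero class in the closure of the cone of K\"ahler $(n-1,n-1)$-classes.
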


This conjecture is shown for projective manifolds in \cite{MM86, BDPP13} and it is also known in dimension three by a theorem of 
Brunella \cite{Bru06} using his theory of rank one foliations. Our main result is as follows:

\begin{theorem} \label{theoremmain}
Let $X$ be a compact K\"ahler manifold of dimension $n$. 
Suppose that Conjecture \ref{conjectureBDPP} holds 
for all manifolds of dimension at most $n-1$. 
If $K_X$ is pseudoeffective but not nef, there exists a $K_X$-negative rational curve $f: \PP^1 \rightarrow X$. 
\end{theorem}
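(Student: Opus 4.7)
The plan is to reduce, via the weak subadjunction formula of this paper, to the inductive form of Conjecture \ref{conjectureBDPP} on a proper subvariety, and then to lift the resulting rational curves to $K_X$-negative rational curves on $X$. First I would fix a K\"ahler class $\omega$ on $X$ and set $t_0 := \inf \{ t > 0 : K_X + t\omega \text{ is nef} \}$. Since $K_X$ is not nef, $t_0 > 0$; since $K_X$ is pseudoeffective, the nef class $\alpha := K_X + t_0 \omega$ is in fact nef and big. By minimality of $t_0$, $\alpha$ lies on the boundary of the K\"ahler cone, so its non-K\"ahler locus is a nonempty proper analytic subset of $X$; by a theorem of Collins--Tosatti it equals the null locus $\mathrm{Null}(\alpha) = \bigcup \{ Z \subsetneq X \text{ irreducible} : \int_Z \alpha^{\dim Z} = 0 \}$.

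\textbf{Subadjunction on a component of the null locus.} I would then choose an irreducible component $V \subset \mathrm{Null}(\alpha)$ and, after a controlled perturbation inside the big cone, construct a singular Hermitian metric on (a small big perturbation of) $\alpha$ whose multiplier ideal exhibits $V$ as a minimal log-canonical centre. Applying the weak subadjunction formula developed earlier in this paper to a desingularisation $\pi : \widehat V \to V$ would produce a cohomology-class identity of the form
\[
\pi^* \bigl( (K_X + \alpha)|_V \bigr) = K_{\widehat V} + N_V + B_V,
\]
with $N_V$ pseudoeffective (the moduli/defect part) and $B_V$ effective. The key extra input is the vanishing $\int_V \alpha^{\dim V} = 0$: this degeneracy, combined with the identity above, would force $K_{\widehat V}$ to be \emph{not} pseudoeffective.

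\textbf{Applying the induction and producing the curve.} Since $\dim \widehat V \le n-1$, Conjecture \ref{conjectureBDPP} in dimension at most $n-1$ would yield that $\widehat V$ is uniruled. Let $f : \PP^1 \to \widehat V \to V \hookrightarrow X$ be a very general member of a covering family of rational curves on $\widehat V$. Its class on $\widehat V$ is movable, so it pairs nonnegatively with both the pseudoeffective class $N_V$ and the effective class $B_V$. Together with the nefness of $\alpha$ on $X$, the subadjunction identity would then force $K_X \cdot f_*[\PP^1] < 0$, producing the desired $K_X$-negative rational curve.

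\textbf{Main obstacle.} The delicate step is the subadjunction argument: one must choose the singular metric and the minimal lc centre $V$ so that the weak subadjunction is applicable, \emph{and} control $N_V$ and $B_V$ sharply enough that the vanishing of $\alpha^{\dim V}$ on $V$ genuinely forces $K_{\widehat V}$ to fail to be pseudoeffective. A secondary subtlety is to guarantee, in the last step, that the rational curves produced by induction are $K_X$-negative rather than merely $K_{\widehat V}$-negative; this relies on using simultaneously the movability of the covering family on $\widehat V$ and the positivity of $\alpha$, $N_V$, and $B_V$.
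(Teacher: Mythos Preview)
Your outline gets as far as the uniruledness of a desingularised component of the null locus, which is essentially Theorem~\ref{theoremuniruled} in the paper. Two points deserve correction even at this stage. First, the paper's weak subadjunction (Theorem~\ref{theoremweaksubadjunction}) does \emph{not} produce a class-level identity $\pi^*(K_X+\alpha)|_V = K_{\widehat V} + N_V + B_V$ with $N_V$ pseudoeffective and $B_V$ effective; it gives only a numerical inequality $K_{\widehat V}\cdot\omega_1\cdots\omega_{d-1}\le \pi^*(K_X+c\alpha)|_V\cdot\omega_1\cdots\omega_{d-1}$ against nef test classes. Second, an irreducible component of $\Null{\alpha}$ is a \emph{maximal} lc centre for $(X,c\alpha)$ (Lemma~\ref{lemmalccentre}), not a minimal one, so Kawamata-type subadjunction for minimal centres is unavailable. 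The paper exploits the numerical inequality by testing against $(\pi^*\alpha|_V)^k\cdot(\pi^*\omega|_V)^{\dim V-k-1}$, where $k=\nd(\pi^*\alpha|_V)$, and uses $(\pi^*\alpha|_V)^{k+1}=0$ to kill the $c\alpha$ term.

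The genuine gap is your final step. Even granting your stronger class-level identity, movability of a covering rational curve $C$ on $\widehat V$ gives only
\[
\pi^*K_X|_V \cdot C \;=\; K_{\widehat V}\cdot C \;+\; (N_V+B_V)\cdot C \;-\; \pi^*\alpha|_V\cdot C,
\]
and nothing prevents the nonnegative term $(N_V+B_V)\cdot C$ from swamping $K_{\widehat V}\cdot C$. What one actually needs is an $\alpha$-\emph{trivial} rational curve: since $\alpha=K_X+t_0\omega$ with $\omega$ K\"ahler, $\alpha\cdot C=0$ forces $K_X\cdot C=-t_0\,\omega\cdot C<0$ directly. Producing $\alpha$-trivial rational curves from mere uniruledness of $\widehat V$ is the heart of the paper and what you call a ``secondary subtlety'' is in fact the main content. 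The paper argues by contradiction: if no rational curve in $Z$ is $K_X$-negative, then by Bishop's theorem $\alpha\cdot C\ge b>0$ for every rational curve $C\subset Z$. One then passes to the MRC fibration $Z'\to Y$, runs an MMP on the general (rationally connected, hence projective) fibre $F$ using Kawamata's bound on lengths of extremal rays (Proposition~\ref{propositionboundedpseff}) to show $K_F+\lambda\,\alpha'|_F$ is pseudoeffective for some explicit $\lambda$, applies the second positivity theorem for relative adjoint classes (Theorem~\ref{theoremdirectimage2}) to get $K_{Z'/Y}+\lambda\alpha'$ pseudoeffective, and invokes Conjecture~\ref{conjectureBDPP} \emph{a second time} on the base $Y$ to conclude $K_{Z'}+\lambda\alpha'$ is pseudoeffective. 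This contradicts $(K_{Z'}+\lambda\alpha')\cdot(\alpha')^k\cdot(\pi^*\omega)^{\dim Z-k-1}<0$, which holds for every $\lambda>0$ because $(\alpha')^{k+1}=0$. None of this machinery is present in your proposal.
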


Our statement is actually a bit more precise: the $K_X$-negative rational curve has zero intersection with 
a cohomology class that is nef and big, so the class of the curve lies in an extremal face of the (generalised) Mori cone.
Theorem \ref{theoremmain} is thus a first step towards a cone and contraction theorem for K\"ahler manifolds of arbitrary dimension. 

In low dimension we can combine our theorem with Brunella's result:

\begin{corollary} \label{corollarylowdimension}
Let $X$ be a compact K\"ahler manifold of dimension at most four. 
If $K_X$ is pseudoeffective but not nef, there exists a rational curve $f: \PP^1 \rightarrow X$ such that $K_X \cdot f(\PP^1)<0$.
\end{corollary}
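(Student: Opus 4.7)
The strategy is immediate: apply Theorem \ref{theoremmain} to $X$, so I only need to verify its hypothesis, namely that Conjecture \ref{conjectureBDPP} holds for compact K\"ahler manifolds of dimension at most $n-1 \leq 3$.

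First I would dispose of the low-dimensional cases. In dimension one, a compact K\"ahler manifold is a smooth curve, and the equivalence is trivial: $K_C$ is pseudoeffective iff $\deg K_C \geq 0$ iff $C \neq \PP^1$ iff $C$ is not uniruled. In dimension two, the conjecture follows from the Enriques--Kodaira classification of compact K\"ahler surfaces; concretely, if $K_X$ is not pseudoeffective then $X$ has negative Kodaira dimension and the classification forces $X$ to be birational to a ruled surface, hence uniruled, and the reverse implication is obvious since rational curves move and $K_X \cdot C < 0$ for such curves. In dimension three, the conjecture is exactly the content of Brunella's theorem \cite{Bru06}, which the excerpt already cites; his argument proceeds via the study of rank one foliations associated to non-pseudoeffective canonical classes.

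With Conjecture \ref{conjectureBDPP} verified in dimensions $\leq 3$, the hypothesis of Theorem \ref{theoremmain} is satisfied for any compact K\"ahler $X$ with $\dim X \leq 4$. Applying the theorem to the given $X$ (which has $K_X$ pseudoeffective but not nef) directly produces a $K_X$-negative rational curve $f \colon \PP^1 \to X$, which is the desired conclusion.

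There is essentially no obstacle left at this stage: the corollary is a pure bookkeeping consequence of Theorem \ref{theoremmain} combined with the known low-dimensional instances of the BDPP-type characterization of uniruledness. The genuine content lies in Theorem \ref{theoremmain} itself and in Brunella's theorem in dimension three; the dimension bound four in the corollary is exactly the largest $n$ for which the inductive hypothesis of Theorem \ref{theoremmain} is currently available in the K\"ahler category.
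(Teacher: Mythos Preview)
Your proposal is correct and matches the paper's approach exactly: the paper simply notes that the corollary follows by combining Theorem \ref{theoremmain} with Brunella's result \cite{Bru06}, which is precisely what you do after dispatching the trivial cases in dimensions one and two.
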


\subsection{The strategy}

The idea of the proof is quite natural and inspired by well-known results of the minimal model program: let $X$ be a compact K\"ahler
manifold such that $K_X$ is pseudoeffective but not nef. We choose a K\"ahler class $\omega$ 
such that $\alpha := K_X+\omega$ is nef and big but not K\"ahler. If we suppose that $X$ is projective and $\omega$ is an $\R$-divisor
class we know by the base point free theorem \cite[Thm.7.1]{HM05} that there exists a morphism
$$
\mu : X \rightarrow X'
$$
such that $\alpha = \mu^* \omega'$ with $\omega'$ an ample $\R$-divisor class on $X'$. Since $\alpha$ is big the morphism
$\mu$ is birational, and we denote by $Z$ an irreducible component of its exceptional locus. A general fibre 
of $Z \rightarrow \mu(Z)$ has positive dimension and is covered by rational curves, in particular $Z$
is uniruled. More precisely, denote by $k \in \N$ the dimension of $\mu(Z)$. Since $\alpha= \mu^* \omega'$ we
have $(\alpha|_Z)^{k+1}=0$ and $(\alpha|_Z)^k$ is represented by some multiple of $F$
where $F$ is an irreducible component of a general fibre of
$Z \rightarrow \mu(Z)$. Since $F$ is an irreducible component of a $\mu$-fibre the conormal sheaf is ``semipositive'', so we expect that
\begin{equation} \label{naivecase}
K_{F'} \cdot \pi^* \omega|_F^{\dim Z-k-1} \leq  \pi^* K_X|_F \cdot \pi^* \omega|_F^{\dim Z-k-1}
\end{equation}
where $\holom{\pi}{F'}{F}$ is a desingularisation of $F$. Since $\alpha|_F$ is trivial and $K_X = \alpha - \omega$
we see that the right hand side is negative, in particular $K_{F'}$ is not pseudoeffective. 
Thus we can apply \cite{MM86, BDPP13} to $F'$ and obtain that $F$ is uniruled. 
Since $F$ is general we obtain that $Z$ is uniruled. The key idea of our approach is to prove a numerical analogue
of \eqref{naivecase} that does not assume the existence of the contraction.

Indeed if $X$ is K\"ahler we are far from knowing the existence of a contraction. However we can still consider the null-locus
$$
\Null{\alpha} = \bigcup_{\int_Z \alpha|_Z^{\dim Z}=0} Z.
$$
It is easy to see that if a contraction theorem holds also in the K\"ahler setting, then the null-locus is exactly the
exceptional locus of the bimeromorphic contraction. We will prove
that at least one of the irreducible components $Z \subset \Null{\alpha}$ is covered by $\alpha$-trivial rational curves: 
let $\holom{\pi}{Z'}{Z}$ be a desingularisation, and let
$k$ be the numerical dimension of $\pi^* \alpha|_Z$ (cf. Definition \ref{definitionnumericaldimension}). 
We will prove that
\begin{equation} \label{notnaivecase}
K_{Z'} \cdot \pi^* \alpha|_Z^{k} \cdot \pi^* \omega|_Z^{\dim Z-k-1} 
\leq 
\pi^* K_X|_Z \cdot \pi^* \alpha|_Z^{k} \cdot \pi^* \omega|_Z^{\dim Z-k-1}.
\end{equation}
Note that the right hand side is negative, so Conjecture \ref{conjectureBDPP} yields the existence of rational curves.
Recall also that if the contraction $\mu$ exists, then $\pi^* \alpha|_Z^{k}$ is a multiple of a general fibre, so 
this inequality is a refinement of \eqref{naivecase}.
The inequality \eqref{notnaivecase} follows from a more general weak subadjunction formula for maximal
lc centres (cf. Definition \ref{definitionmaxlc}) of the pair $(X, c \alpha)$ (for some real number $c>0$)
which we will explain in the next
section. The idea of seeing the irreducible components of the null locus as an lc centre for a suitably chosen pair is 
already present in Takayama's uniruledness of stable base loci \cite{Tak08}, in our case 
a recent result of Collins and Tosatti \cite[Thm.1.1]{CT13} and the work of Boucksom \cite{Bou04} yield this property
without too much effort.

While \eqref{notnaivecase} and Conjecture \ref{conjectureBDPP} imply immediately that $Z$ is uniruled it is a priori
not clear if we can choose the rational curves to be $K_X$-negative (or even $\alpha$-trivial):
for the simplicity of notation, let us suppose that $Z$ is smooth.  
If $Z$ was projective and $\alpha|_Z$ an $\R$-divisor class we could argue as in \cite[Prop.7.11]{HP16} using Araujo's description of the mobile cone \cite[Thm.1.3]{Ara10}.
In the K\"ahler case we need a new argument: let $Z \rightarrow Y$ be the MRC-fibration (cf. Remark \ref{remarkMRC})
and let $F$ be a general fibre.
Arguing by contradiction we suppose that $F$ is not covered by $\alpha$-trivial rational curves.
A positivity theorem for relative adjoint classes (Theorem \ref{theoremdirectimage2}) shows that 
$K_{Z/Y} + \alpha|_Z$ is pseudoeffective if $K_F+ \alpha|_F$ is pseudoeffective. 
Since $K_Y$ is pseudoeffective by Conjecture \ref{conjectureBDPP} this implies that 
$K_Z+\alpha|_Z$ is pseudoeffective, a contradiction to  \eqref{notnaivecase}.

Thus we are left to show that $K_F+ \alpha|_F$ is pseudoeffective, at least up to replacing 
$\alpha|_F$ by $\lambda \alpha|_F$ for some $\lambda \gg 0$. Since $\alpha|_F$ is not a rational cohomology class this is a non-trivial property related to
the Nakai-Moishezon criterion for $\R$-divisors by  Campana and Peternell \cite{CP90}.
Using the minimal model program for the projective manifold $F$ and Kawamata's bound on the length
of extremal rays \cite[Thm.1]{Kaw91} we overcome this problem in Proposition \ref{propositionboundedpseff}.

\subsection{Weak subadjunction}

Let $X$ be a complex projective manifold, and let $\Delta$ be an effective $\Q$-Cartier divisor on $X$ 
such that the pair $(X, \Delta)$ is log-canonical. Then there is a finite number of log-canonical centres associated
to $(X, \Delta)$ and if we choose $Z \subset X$ an lc centre that is minimal with respect to the inclusion, the
Kawamata subadjunction formula holds \cite{Kaw98} \cite[Thm1.2]{FG12}: the centre $Z$ is a normal variety and there exists a boundary divisor $\Delta_Z$ such that $(Z, \Delta_Z)$ is klt and
$$
K_Z + \Delta_Z \sim_\Q (K_X+\Delta)|_Z.
$$
If the centre $Z$ is not minimal the geometry is more complicated, however we can still find an {\em effective} $\Q$-divisor $\Delta_{\tilde Z}$
on the normalisation $\holom{\nu}{\tilde Z}{Z}$ such 
that\footnote{This statement is well-known to experts, cf. \cite[Lemma 3.1]{a24} for a proof.}
$$
K_{\tilde Z} + \Delta_{\tilde Z} \sim_\Q \nu^* (K_X+\Delta)|_Z.
$$
We prove a weak analogue of the subadjunction formula
for cohomology classes:

\begin{theorem} \label{theoremweaksubadjunction}
Let $X$ be a compact K\"ahler manifold, and let $\alpha$ be a cohomology class on $X$
that is a modified K\"ahler class (cf. Definition \ref{definitionmodified}). Suppose that $Z \subset X$ is a maximal lc centre of the pair $(X, \alpha)$, 
and let $\holom{\nu}{\tilde Z}{Z}$ be the normalisation. 
Then we have 
$$
K_{\tilde Z} \cdot \omega_1 \cdot \ldots \cdot \omega_{\dim Z-1}
\leq
\nu^* (K_X+\alpha)|_Z \cdot \omega_1 \cdot \ldots \cdot \omega_{\dim Z-1},
$$
where $\omega_1, \ldots, \omega_{\dim Z-1}$ are arbitrary nef classes on $\tilde Z$.
\end{theorem}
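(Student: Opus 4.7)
The plan is to extract $Z$ as a coefficient-$1$ divisor on a log resolution of a current representing $\alpha$, apply divisorial adjunction there, and push the resulting identity down to $\tilde Z$ using the paper's relative adjoint positivity result (Theorem \ref{theoremdirectimage2}).

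First I would pick a closed positive $(1,1)$-current $T\in\alpha$ with analytic singularities exhibiting $Z$ as an lc centre and take a log resolution $\mu\colon\tilde X\to X$ of $T$, so that in cohomology
$$\mu^*\alpha\equiv\sum_i a_i\{E_i\}+\{\beta\}$$
with $\beta$ a Kähler class on $\tilde X$ and all $a_i\le 1$. Using the \emph{maximality} of $Z$ as an lc centre, together with the freedom to perturb $T$ within the modified Kähler class $\alpha$, I would arrange that a unique prime divisor $E=E_{i_0}$ has coefficient $1$ and dominates $Z$, while every other coefficient-$1$ divisor meeting $E$ maps into a proper subvariety of $Z$. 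Let $f\colon E\to\tilde Z$ be the induced map through the normalisation $\nu$, and set $d:=\dim E-\dim Z$.

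Combining $K_{\tilde X}=\mu^*K_X+\sum_j b_jF_j$ with the above expression for $\mu^*\alpha$ and restricting to $E$ via $(K_{\tilde X}+E)|_E=K_E$, I would obtain a numerical identity
$$K_E+\beta|_E+\Gamma^+\equiv f^*\bigl(\nu^*(K_X+\alpha)|_Z\bigr)+\Gamma^-,$$
where $\Gamma^+$ is effective and $\Gamma^-$ is an effective $\R$-divisor all of whose components are $f$-vertical (collecting the $\mu$-exceptional contributions over $Z$). Intersecting both sides with $f^*(\omega_1\cdots\omega_{\dim Z-1})\cdot(\beta|_E)^d$ and pushing forward by $f_*$, the projection formula gives $f_*\bigl(f^*\eta\cdot(\beta|_E)^d\bigr)=c\,\eta$ for $c:=\int_F(\beta|_F)^d>0$ and $F$ a general fibre, while the $f$-verticality of $\Gamma^-$ forces $f_*\bigl(\Gamma^-\cdot(\beta|_E)^d\bigr)=0$ by a dimension count (when $d\ge 1$). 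The terms $\beta|_E$ and $\Gamma^+$ contribute non-negatively against the nef classes $\omega_i$, and Theorem \ref{theoremdirectimage2} applied to $f$ yields that $f_*\bigl(K_E\cdot(\beta|_E)^d\bigr)$ dominates $c\,K_{\tilde Z}$ modulo pseudoeffective corrections. Assembling these pieces gives the desired inequality.

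The main obstacle is this last input: bounding $f_*\bigl(K_E\cdot(\beta|_E)^d\bigr)$ from below by $K_{\tilde Z}$ in the Kähler setting. In the projective case this would follow from Kawamata-type positivity of direct images, but here one genuinely needs the paper's Kähler positivity result for relative adjoint classes, which rests on analytic ($L^2$, Bergman-kernel) techniques. A secondary, more combinatorial difficulty is the clean extraction of $Z$ by a single coefficient-$1$ divisor on some resolution, and this is where the \emph{maximality} hypothesis on $Z$ is decisive (as opposed to minimality, which would not suffice for singling out $E$). The generically finite case $d=0$ requires a separate birational argument rather than relative positivity.
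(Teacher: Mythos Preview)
Your overall architecture is right: pass to a log resolution, isolate a single divisor $E$ over $Z$ with coefficient $1$ via tie-breaking, restrict the adjunction identity to $E$, and push down to $\tilde Z$ using a positivity result for the relative canonical class. But the specific positivity input you invoke is wrong, and the divisorial bookkeeping in your push-forward step does not close.

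First, Theorem~\ref{theoremdirectimage2} cannot be applied to $f\colon E\to\tilde Z$: that theorem requires the general fibre $F$ to be simply connected with $H^0(F,\Omega_F^2)=0$, and there is no reason whatsoever for the fibres of an lc place over a maximal centre to satisfy this. The correct tool is Theorem~\ref{theoremdirectimage}, whose hypothesis~(c) is precisely what adjunction hands you: the restricted identity reads
\[
K_{E/Z'}+\tilde\alpha|_E + D \;=\; f^*L, \qquad L=(\pi|_{Z'})^*(K_X+\alpha)|_Z - K_{Z'},
\]
so the relative adjoint class is a pull-back from the base. Theorem~\ref{theoremdirectimage} then yields $L\cdot\omega_1'\cdots\omega_{\dim Z-1}'\ge 0$ directly, with no slicing by $(\beta|_E)^d$ and no assumption on the fibres.

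Second, your decomposition $K_E+\beta|_E+\Gamma^+ \equiv f^*(\cdots)+\Gamma^-$ with $\Gamma^-$ effective and $f$-vertical is not achievable: the boundary $D=-\sum_{j\ge 2}d_j(E_j\cap E)$ has $f$-horizontal components coming from exceptional divisors over $Z$ with discrepancy in $(-1,0)$, and $f$-vertical components whose discrepancies can be $\le -1$. The paper handles this by modifying $D$ to $\tilde D$ so that conditions (a),(b) of Theorem~\ref{theoremdirectimage} hold, and then shows that the correction term $c_1(f_*\mathcal O_E(\lceil -D\rceil))$ is, up to a divisor mapping to codimension $\ge 2$ in $Z$, anti-effective. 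This last step uses relative Kawamata--Viehweg vanishing and the surjection $\mu_*\mathcal O_{\tilde X}(\sum\lceil d_j\rceil E_j)\twoheadrightarrow (\mu|_E)_*\mathcal O_E(\lceil -D\rceil)$; your dimension count for $f_*(\Gamma^-\cdot(\beta|_E)^d)=0$ does not substitute for this, since vertical components can dominate divisors in $\tilde Z$.

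Finally, your remark on maximality is slightly off: maximality is encoded in the condition $d_j\ge -1$ for every $E_j$ over $Z$, which is exactly what makes the pair sub-lc along the horizontal part (condition~(a)) and forces $f_*\mathcal O_E(\lceil -D\rceil)$ to have rank one. The tie-breaking that singles out $E$ is a separate, standard perturbation argument and works for any lc centre.
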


Our proof follows the strategy of Kawamata in \cite{Kaw98}: given a log-resolution $\holom{\mu}{\tilde X}{X}$  
and an lc place $E_1$ dominating $Z$ we want to use a canonical bundle formula 
for the fibre space $\mu|_{E_1}: E_1 \rightarrow \tilde Z$ to relate $\mu^* (K_X+\alpha)|_{E_1}$ and $K_{\tilde Z}$.
As in \cite{Kaw98} the main ingredient for a canonical bundle formula is the positivity theorem for relative adjoint classes Theorem
\ref{theoremdirectimage} which, together with Theorem \ref{theoremdirectimage2}, is the main technical contribution
of this paper. The main tool of the proofs of Theorem \ref{theoremdirectimage} and Theorem \ref{theoremdirectimage2} is the positivity of the fibrewise Bergman kernel which is established in \cite{BP08, BP10}. 
Since we work with lc centres that are not necessarily minimal the positivity result Theorem \ref{theoremdirectimage}
has to be stated for pairs which might not be (sub-)klt.  This makes the setup of the proof quite heavy, but similar to earlier arguments
(cf. \cite{BP10, Pau12b} and \cite{FM00, Tak06} in the projective case).

The following elementary example illustrates Theorem \ref{theoremweaksubadjunction} and
shows how it leads to Theorem \ref{theoremmain}:

\begin{example}
Let $X'$ be a smooth projective threefold, and let $C \subset X'$ be a smooth curve such that the normal bundle
$N_{C/X'}$ is ample. Let $\holom{\mu}{X}{X'}$ be the blow-up of $X'$ along $C$ and let $Z$ be the exceptional divisor.
Let $D \subset X'$ be a smooth ample divisor containing the curve $C$, and let $D'$ be the strict transform.

By the adjunction formula we have $K_Z = (K_X+Z)|_Z$, in particular it is not true that
$K_{Z} \cdot \omega_1
\leq
K_X|_Z \cdot \omega_1$
for every nef class $\omega_1$ on $Z$. Indeed this would imply that $-Z|_Z$ is pseudoeffective, hence $N_{C/X'}^*$ is pseudoeffective
in contradiction to the construction. However if we set $\alpha := \mu^* c_1(D)$, then $\alpha$ is nef and represented
by $\mu^* D = D' + Z$. Then the pair $(X, D'+Z)$ is log-canonical and $Z$ is a maximal lc centre. Moreover we have 
$$
K_Z \cdot \omega_1 = (K_X+Z)|_Z \cdot \omega_1 \leq (K_X+ D' + Z)|_Z \cdot \omega_1 = (K_X+ \alpha)|_Z \cdot \omega_1
$$
since $D'|_Z$ is an effective divisor. 

Now we set $\omega_1= \alpha|_Z$, then $\alpha|_Z \cdot \omega_1 = \alpha|_Z^2 =0$ since it is a pull-back from $C$. Since $K_X$ is anti-ample on the $\mu$-fibres we have
$$
K_Z \cdot \alpha|_Z = K_X|_Z \cdot \alpha|_Z < 0.
$$
Thus $K_Z$ is not pseudoeffective.
\end{example}

\subsection{Relative adjoint classes}

We now explain briefly the idea of the proof of Theorem \ref{theoremdirectimage} and Theorem \ref{theoremdirectimage2}. In view of the main results in \cite{BP08} and \cite{Pau12},
it is natural to ask the following question :
\begin{question}\label{mainpositquestion}
Let $X$ and $Y$ be two compact K\"ahler manifolds of dimension $m$ and $n$ respectively, and let $f: X\rightarrow Y$ be a surjective map with connected fibres.
Let $F$ be the general fiber of $f$.
Let $\alpha_X$ be a K\"ahler class on $X$ and let $D$ be a klt $\mathbb{Q}$-divisor on $X$ such that $c_1 (K_F) + [(\alpha_X +D)|_F]$ is a pseudoeffective class.
Is $c_1 (K_{X/Y}) + [\alpha_X +D]$ pseudoeffective ?
\end{question}
In the case 
$c_1 (K_F) + [(\alpha_X +D)|_F]$ is a K\"{a}hler class on $F$, \cite{Pau12, Gue16} confirm the above question by studying the variation of K\"{a}hler-Einstein metrics
(based on \cite{Sch12}). 
In our article, we confirm Question \ref{mainpositquestion} in two special cases: Theorem \ref{theoremdirectimage} and Theorem \ref{theoremdirectimage2} by using the positivity of the fibrewise Bergman kernel 
which is established in \cite{BP08, BP10}.
Let us compare our results to P\u aun's result \cite[Thm.1.1]{Pau12} on relative adjoint classes: 
while we make much weaker
assumptions on the geometry of pairs or the positivity of the involved cohomology classes we are always in a situation where
locally over the base we only have to deal with $\R$-divisor classes. Thus the transcendental character of the argument is
only apparent on the base, not along the general fibres. 

More precisely, in Theorem \ref{theoremdirectimage}, we add an additional condition that
$c_1 (K_{X/Y} +[\alpha_X +D])$ is pull-back of a $(1,1)$-class on $Y$ (but we assume that $D$ is sub-boundary). Then we can take a Stein cover $(U_i)$ of $Y$
such that $ (K_{X/Y} +[\alpha_X +D]) |_{f^{-1}(U_i)}$ is trivial on $f^{-1}(U_i)$. Therefore $[\alpha_X +D] |_{f^{-1}(U_i)}$ is a $\R$-line bundle on $f^{-1}(U_i)$. 
We assume for simplicity that $D$ is klt (the sub-boundary case is more complicated).
We can thus apply \cite{BP10} to every pair $(f^{-1}(U_i), K_{X/Y} +[\alpha_X +D])$. Since the fibrewise Bergman kernel metrics are defined fiber by fiber, by using 
$\partial\overline{\partial}$-lemma, we can glue the metrics together and Theorem \ref{theoremdirectimage} is thus proved.

In Theorem \ref{theoremdirectimage2}, we add the condition that $F$ is simply connected and $H^0 (F, \Omega_F ^2) =0$
\footnote{If $F$ is rational connected these two conditions are satisfied.}. Then we can find a Zariski open set $Y_0$ of $Y$
such that $R^i f_* (\mathcal{O}_X) =0$ on $Y_0$ for every $i=1,2$. By using the same argument as in Theorem \ref{theoremdirectimage}, we can construct a quasi-psh function
$\varphi$ on $f^{-1} (Y_0)$ such that $\frac{\sqrt{-1}}{2\pi}\Theta (K_{X/Y}) + \alpha_X +dd^c \varphi \geq 0$ on $f^{-1} (Y_0)$. Now the main problem is to extend $\varphi$ 
to be a quasi-psh function on $X$. Since $c_1 (K_F +\alpha_X |_F)$ is not necessary a K\"{a}hler class on $F$, we cannot use directly the method in \cite[3.3]{Pau12} .
Here we use the idea in \cite{Lae02}. In fact, thanks to 
\cite[Part II, Thm 1.3]{Lae02}, we can find an increasing sequence $(k_m)_{m\in \N}$ and hermitian line bundles $(F_m, h_m)_{m\in \N}$ (not necessarily holomorphic) on $X$ such that
\begin{equation}\label{introconstr}
\|\frac{\sqrt{-1}}{2\pi} \Theta_{h_m} (F_m) - k_m (\frac{\sqrt{-1}}{2\pi}\Theta (K_{X/Y}) + \alpha_X)\|_{C^{\infty} (X)}\rightarrow 0 .
\end{equation}
Let $X_y$ be the fiber over $y\in Y_0$. As we assume that $H^0 (X_y, \Omega_{X_y} ^2) =0$, $F_m |_{X_y}$ can be equipped with a holomorphic structure $J_{X_y, m}$. 
Therefore we can define the Bergman kernel metric associated to $( F_m |_{X_y} , J_{X_y, m}, h_m)$.
Thanks to $\partial\overline{\partial}$-lemma, we can compare $\varphi|_{X_y}$ and the Bergman kernel metric associated to $( F_m |_{X_y} ,J_{X_y, m},  h_m)$.
Note that \eqref{introconstr} implies that $F_m$ is more and more holomorphic. Therefore, by using standard Ohsawa-Takegoshi technique \cite{BP10}, 
we can well estimate the Bergman kernel metric associated to $F_m |_{X_y}$ when $y\rightarrow Y\setminus Y_0$. Theorem \ref{theoremdirectimage2} is thus proved by combining these two facts.

{\bf Acknowledgements.} 
This work was partially supported by the A.N.R. project CLASS\footnote{ANR-10-JCJC-0111}.

\section{Notation and terminology}

For general definitions we refer to \cite{Har77, Kau83, Dem12}.
Manifolds and normal complex spaces will always be supposed to be irreducible.
A fibration is a proper surjective map with connected fibres \holom{\varphi}{X}{Y} between normal complex spaces.

\begin{definition} \label{definitiondecomp}
Let $X$ be a normal complex space, and let $\holom{f}{X}{Y}$ be a proper surjective morphism. A $\Q$-divisor $D$ is
$f$-vertical if $f(\supp D) \subsetneq Y$. Given a $\Q$-divisor $D$ it admits a unique decomposition
$$
D = D_{f\mbox{\tiny \rm -hor}} + D_{f\mbox{\tiny \rm -vert}}
$$
such that $D_{f\mbox{\tiny \rm -vert}}$ is $f$-vertical and every irreducible component $E \subset \supp D_{f\mbox{\tiny \rm -hor}}$ surjects onto $Y$.
\end{definition}

\begin{definition} \label{definitionc1}
Let $X$ be a complex manifold, and let $\sF$ be a sheaf of rank one on $X$ that is locally free in codimension one. 
The bidual $\sF^{**}$ is reflexive of rank one, so locally free, and
we set $c_1(\sF):=c_1(\sF^{**})$.
\end{definition}

Throughout this paper we will use positivity properties of real cohomology classes 
of type $(1,1)$, that is elements of the vector space $H^{1,1}(X) \cap H^2(X, \R)$.
The definitions can be adapted to the case of a normal compact K\"ahler space $X$
by using Bott-Chern cohomology for $(1,1)$-forms with local potentials \cite{HP16}.
In order to simplify the notation we will use the notation 
$$
N^1(X) := H^{1,1}(X) \cap H^2(X, \R).
$$
Note that for the purpose of this paper
we will only use cohomology classes that are pull-backs of nef classes on some smooth space, 
so it is sufficient to give the definitions in the smooth case.

\begin{definition} \label{definitionnef} \cite[Defn 6.16]{Dem12}
Let $(X, \omega_{X} )$ be a compact K\"ahler manifold, and let $\alpha \in N^1(X)$. 
We say that $\alpha$ is nef if for every $\epsilon> 0$, there is a smooth $(1,1)$-form $\alpha_{\epsilon}$
in the same class of $\alpha$ such that $\alpha_{\epsilon}\geq -\epsilon\omega_{X}$.

We say that $\alpha$ is pseudoeffective if there exists a $(1, 1)$-current $T\geq 0$ in the same class of $\alpha$.
We say that $\alpha$ is big if there exists a $\epsilon> 0$ such that $\alpha-\epsilon \omega_{X}$ is pseudoeffective.
\end{definition}

\begin{definition}
Let $X$ be a compact K\"ahler manifold, and let $\alpha \in N^1(X)$ be a nef and big cohomology class on $X$.
The null-locus of $\alpha$ is defined as
$$
\Null{\alpha} = \bigcup_{\int_Z \alpha|_Z^{\dim Z}=0} Z.
$$
\end{definition}

\begin{remark*}
A priori the null-locus is a countable union of proper subvarieties of $X$. However by \cite[Thm.1.1]{CT13}
the null-locus coincides with the non-K\"ahler locus $E_{nK}(\alpha)$, in particular it is an analytic subvariety of $X$.
\end{remark*}

\begin{definition} \cite[Defn 6.20]{Dem12} \label{definitionnumericaldimension}
Let $X$ be a compact K\"ahler manifold, and let $\alpha \in N^1(X)$ be a nef class.
We define the numerical dimension of $\alpha$ by 
$$
\nd (\alpha) :=
\max \{k \in \N \ | \ \alpha^{k}\neq 0 \mbox{ in } H^{2k}(X,\mathbb{R})\}.
$$
\end{definition}

\begin{remark} \label{remarknumericaldimension} 
A nef class $\alpha$ is big if and only if $\int_X \alpha^{\dim X} >0$ \cite[Thm.0.5]{DP04} 
which is of course equivalent to $\nd(\alpha)=\dim X$. 

By \cite[Prop 6.21]{Dem12} the cohomology class $\alpha^{\nd (\alpha)}$ can be represented 
by a non-zero closed positive $(\nd (\alpha),\nd (\alpha))$-current $T$.
Therefore 
$\int_X \alpha^{\nd (\alpha)}\wedge\omega_{X}^{\dim X - \nd (\alpha)} > 0$ for any K\"ahler class $\omega_{X}$.
\end{remark}

\begin{definition} \label{definitionintersection}
Let $X$ be a normal compact complex space of dimension $n$, and let $\omega_1, \ldots, \omega_{n-1} \in N^1(X)$
be cohomology classes. Let $\sF$ be a reflexive rank one sheaf on $X$, and let $\holom{\pi}{X'}{X}$ be a desingularisation.
We define the intersection number $c_1(\sF) \cdot \omega_1 \cdot \ldots \cdot \omega_{n-1}$ by 
$$
c_1((\mu^* \sF)^{**}) \cdot \mu^* \omega_1 \cdot \ldots \cdot \mu^* \omega_{n-1}.
$$
\end{definition}

\begin{remark*} The definition above does not depend on the choice of the resolution $\pi$:
the sheaf $\sF$ is reflexive of rank one, so locally free on the smooth locus of $X$. Thus $\mu^* \sF$ is locally free in the complement
of the $\mu$-exceptional locus. Thus $\holom{\pi_1}{X_1'}{X}$ and $\holom{\pi_2}{X_2'}{X}$ are two resolutions and $\Gamma$ is a manifold dominating $X_1'$ and $X_2'$ via bimeromorphic morphisms $q_1$ and $q_2$, then $q_1^* \pi_1^* \sF$
and $q_2^* \pi_2^* \sF$ coincide in the complement of the $\pi_1 \circ q_1=\pi_2 \circ q_2$-exceptional locus. Thus their
biduals coincide in the complement of this locus. By the projection formula their intersection with classes coming from $X$ are the same.
\end{remark*}

\section{Positivity of relative adjoint classes, part 1}

Before the proof of the main theorem in this section, we first recall the construction of fibrewise Bergman kernel metric and its important property, 
which are established in the works \cite{BP08,BP10}.
The original version \cite{BP10} concerns only the projective fibration. However, thanks to the optimal extension theorem \cite{GZ15}
and an Ohsawa-Takegoshi extension theorem for K\"{a}hler manifolds \cite{Yi14,Cao14}, we know that it is also true for the 
K\"{a}hler case :

\begin{theorem} {\cite[Thm 0.1]{BP10}, \cite[3.5]{GZ15}, \cite[Thm 1.1]{Yi14}\cite[Thm 1.2]{Cao14}}\label{maintool}
Let $p : X\rightarrow Y$ be a proper fibration between K\"{a}hler manifolds of dimension $m$ and $n$ respectively, and let $L$ be a line bundle endowed with a metric $h_L$ such that:

1) The curvature current of the bundle $(L, h_L)$ is semipositive in the sense of current, i.e., $\sqrt{-1}\Theta_{h_L} (L) \geq 0 ;$

2) there exists a general point $z\in Y$ and a non zero section $u\in H^0 (X_z, m K_{X_z} +L)$ such that
\begin{equation}\label{integralcond}
\int_{X_z} |u|_{h_L} ^{\frac{2}{m}} < +\infty . 
\end{equation}
Then the line bundle $m K_{X/Y} +L$ admits a metric with positive curvature current. Moreover, this metric is equal to the fibrewise $m$-Bergman kernel metric on the general fibre of $p$.
\end{theorem}

\begin{remark}\label{bergmankernelconstr}
Here are some remarks about the above theorem. 

{\em (1):} Note first that as $u\in H^0 (X_z, m K_{X_z} +L)$, $|u|_{h_L} ^{\frac{2}{m}}$ is a volume form on $X_z$.
Therefore the integral \eqref{integralcond} is well defined.

{\em (2):} The fibrewise $m$-Bergman kernel metric is defined as follows : Let $x\in X$ be a point on a smooth fibre of $p$. We first define a hermitian metric $h$ on $- (m K_{X/Y} +L)_x$
by
$$\|\xi\|_h ^2 :=\sup \frac{|\tau (x) \cdot \xi|^2}{(\int_{X_{p (x)}} |\tau|_{h_L} ^{\frac{2}{m}})^m} ,$$
where $\xi$ is a basis of  $- (m K_{X/Y} +L)_x$ and the 'sup' is taken over all sections $\tau\in H^0 (X_{p(x)} , m K_{X/Y} +L)$. The fibrewise $m$-Bergman kernel metric on $m K_{X/Y}+L$ is defined to be the dual of $h$.

It will be useful to give a more explicit expression of the Bergman kernel type metric.
Let $\omega_X$ and $\omega_Y$ be K\"{a}hler metrics on $X$ and $Y$ respectively. Then $\omega_X$ and $\omega_Y$
induce a natural metric $h_{X/Y}$ on $K_{X/Y}$. 
Let $Y_0$ be a Zariski open set of $Y$ such that $p$ is smooth over $Y_0$.
Set $h_0 := h_{X/Y} ^m \cdot h_L$ be the induced metric on $m K_{X/Y} +L$.
Let $\varphi$ be a function on $p^{-1} (Y_0)$ defined by 
$$\varphi (x) =\sup_{\tau \in A} \frac{1}{m} \ln |\tau|_{h_0} (x) ,$$
where
$$
A := \{f \ | \   f\in H^0 (X_{p(x)} , m K_{X/Y} +L)  \text{ and } \int_{X_{p (x)}} |f|_{h_0} ^{\frac{2}{m}} (\omega_X ^m / p^*\omega_Y ^n ) =1 \}.
$$
We can easily check that the metric $ h_0 \cdot e^{- 2 m \varphi}$ on $m K_{X/Y}+L$ coincides with the fibrewise $m$-Bergman kernel metric defined above.
In particular, $ h_0 \cdot e^{- 2 m \varphi}$ is independent of the choice of the metrics $\omega_X$ and $\omega_Y$.
Sometimes we call $\varphi$ the fibrewise $m$-Bergman kernel metric.

{\em (3):} Note that, by construction, if we replace $h_L$ by $f^\star c(y) \cdot h_L$ for some smooth strictly positive function $c(y)$ on $Y$, 
the corresponding weight function $\varphi$ in unchanged.
\end{remark}

For readers' convenience, we recall also the following version of the Ohsawa-Takgoshi extension theorem which will be used in the article.
\begin{proposition}\cite[Prop 0.2]{BP10}\label{keyextension}
Let $p : X\rightarrow \Delta$ be a fibration from a K\"{a}hler manifold to the unit disc $\Delta \in \mathbb{C}^n$.
and let $L$ be a line bundle endowed with a possible singular metric $h_L$ such that $\sqrt{-1}\Theta_{h_L} (L) \geq 0$ in the sense of current.
Let $m\in\mathbb{N}$.
We suppose that the center fiber $X_0$ is smooth and let $f\in H^0 (X_0, m K_{X_0} +L)$ such that
$$
\int_{X_0} |f|_{h_L} ^{\frac{2}{m}} < +\infty . 
$$
Then there exists a $F\in H^0 (X, m K_{X/Y} +L)$ such that 
\begin{enumerate}
\item[\rm (i)] $F|_{X_0} =f$
\item[\rm (ii)] The following $L^{\frac{2}{m}}$ bound holds
$$\int_{X} |F|_{h_L} ^{\frac{2}{m}} \leq C_0 \int_{X_0} |f|_{h_L} ^{\frac{2}{m}} .$$
where $C_0$ is an absolute constant as in the standard Ohsawa-Takegoshi theorem.
\end{enumerate}

Moreover, thanks to \cite{GZ15}, we can take $C_0$ as the volume of the unit disc $\Delta$.
\end{proposition}

Here is the main theorem in this section.

\begin{theorem}\label{theoremdirectimage}
Let $X$ and $Y$ be two compact K\"ahler manifolds of dimension $m$ and $n$ respectively, and let $f: X\rightarrow Y$ be a surjective map with connected fibres.
Let $\alpha_X$ be a K\"ahler class on $X$. Let\footnote{The somewhat awkward notation
will be become clear in the proof of Theorem \ref{theoremweaksubadjunction}.} $D=\sum\limits_{j=2}^k -d_j D_j$ be a $\mathbb{Q}$-divisor on $X$ such that the support has simple normal crossings.
Suppose that the following properties hold:

\begin{enumerate}
\item[\rm (a)] If $d_j \leq -1$ then $f(D_j)$ has codimension at least $2$. 
\item[\rm (b)] The direct image sheaf $f_* \mathcal{O}_X (\lceil-D\rceil)$ has rank one. Moreover, if $D= D^h~+~D^v$ is the decomposition in a $f$-horizontal part $D^h$ (resp. $f$-vertical part $D^v$)
then we have $(f_* \mathcal{O}_X (\lceil-D ^v\rceil))^{**} \simeq \mathcal{O}_Y$.
\item[\rm (c)] $c_1 (K_{X/Y} +\alpha_X + D)=f^* \beta$ for some real class $\beta\in H^{1,1} (Y, \mathbb{R})$. 
\end{enumerate}

Let $\omega_1 , \omega_2 , \cdots , \omega_{\dim Y -1}$ be nef classes on $Y$. Then we have
\begin{equation}\label{positivecondtion}
\beta \cdot \omega_1\cdots \omega_{\dim Y -1} \geq 0 .
\end{equation}
\end{theorem}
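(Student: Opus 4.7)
The plan is to use condition~(c) to reduce the problem to the setting of the fibrewise Bergman kernel theorem (Theorem~\ref{maintool}) locally over $Y$, and then to glue the resulting positively curved currents via the $\partial\bar\partial$-lemma into a single closed positive $(1,1)$-current on $Y$ representing $L$. Once such a current is constructed, the inequality~\eqref{positivecondtion} is immediate, since a closed positive $(1,1)$-current pairs non-negatively with every product of nef classes.

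More concretely, first choose a locally finite cover $\{U_i\}$ of $Y$ by contractible Stein open sets, together with smooth real potentials $\lambda_i$ on $U_i$ such that $L|_{U_i}=dd^c\lambda_i$. By condition~(c) we then have $c_1(-K_{X/Y}|_{f^{-1}(U_i)})=[\alpha_X+D]|_{f^{-1}(U_i)}$ in $N^1(f^{-1}(U_i))$, so the a priori transcendental class $\alpha_X+D$ is realised as the first Chern class of the honest line bundle $-K_{X/Y}|_{f^{-1}(U_i)}$. Write $D=D^+-D^-$ with $D^\pm\geq 0$ effective, pick a K\"ahler form $\omega\in\alpha_X$, and apply the $\partial\bar\partial$-lemma on $f^{-1}(U_i)$ to equip the line bundle $M_i:=(-K_{X/Y}+D^-)|_{f^{-1}(U_i)}$ with a singular Hermitian metric whose curvature current equals the positive current $\omega+[D^+]$. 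By condition~(a) the components of $D^+$ with coefficient $\geq 1$ are $f$-vertical with image of codimension $\geq 2$ in $Y$, hence do not meet a general fibre $F$; consequently the pair $(F,D^+|_F)$ is klt and a defining section of $(K_{X/Y}+M_i)|_F=D^-|_F$ has finite fibrewise $L^2$-norm in this metric. Combined with the rank-one condition~(b), we are precisely in the setting of Theorem~\ref{maintool} applied with $m=1$ to $f|_{f^{-1}(U_i)}\colon f^{-1}(U_i)\to U_i$ and the line bundle $M_i$.

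Theorem~\ref{maintool} then yields a positively curved singular Hermitian metric on $K_{X/Y}+M_i=D^-|_{f^{-1}(U_i)}$, and because the Bergman kernel construction of Remark~\ref{bergmankernelconstr} is intrinsic to each fibre, the associated potential differs from $f^*\lambda_i$ by a function that is pluriharmonic along the general fibres, hence equals $f^*\mu_i$ modulo a pluriharmonic term for some psh function $\mu_i$ on $U_i$ with $[dd^c\mu_i]=L|_{U_i}$. On overlaps $U_i\cap U_j$ the differences $\mu_i-\mu_j$ are then pluriharmonic, so the local currents $dd^c\mu_i$ glue into a globally defined closed positive $(1,1)$-current on $Y$ in the class $L$, which gives the desired inequality. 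The main difficulty I expect is the extension step: the Bergman kernel potential is constructed a priori only over the smooth locus $Y_0$ of $f$ and away from the image of $\supp D$, and extending $\mu_i$ as a psh function across both the discriminant locus and the higher-codimension image of the non-klt and negative-coefficient components of $D$ is exactly the delicate Ohsawa--Takegoshi estimate that underlies the proof of Theorem~\ref{maintool}.
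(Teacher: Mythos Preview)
Your overall plan---Stein cover $\{U_i\}$ of $Y$, realise $\alpha_X+D$ as the curvature of an honest singular line bundle on each $f^{-1}(U_i)$, invoke Theorem~\ref{maintool}, then glue via the fibrewise nature of the Bergman kernel---is exactly the paper's strategy. However, your descent step is not correct. Write $D=B-F^v-F^h$ with $B,F^v,F^h$ effective (so your $D^-=F^v+F^h$). When $F^h\neq 0$ the Bergman potential $\varphi_i$ has logarithmic behaviour along $F^h\cap X_y$ on the general fibre; it is \emph{not} pluriharmonic along fibres and does not factor as $f^*\mu_i$. The paper therefore glues the $\varphi_i$ on $X$, not on $Y$: the normalised section on each fibre is unique (up to a unimodular constant) by the rank-one part of condition~(b), so $\varphi_i=\varphi_j$ on $f^{-1}(U_i\cap U_j)$ and one obtains a single quasi-psh function $\varphi$ on $X$. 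The conclusion for $L$ is then read off from a current inequality on $X$ together with the fact that the residual error has image of codimension $\geq 2$ in $Y$.

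There is a second, more substantial gap. A direct application of Theorem~\ref{maintool} with your metric of curvature $\omega+[D^+]$ only produces a positive current in the class of $K_{X/Y}+M_i=D^-$, which is trivially pseudoeffective; in the paper's formulation this is the inequality $(\alpha_X+\beta+D)+dd^c\varphi_i\geq -\lceil F^v\rceil-\lceil F^h\rceil$. To say anything about $L$ one must sharpen this to $\geq -\lceil F_2^v\rceil$, where $F_2^v\subset F^v$ is the part with $\codim_Y f(F_2^v)\geq 2$. Absorbing $\lceil F^h\rceil+\lceil F_1^v\rceil$ is the technical heart of the proof: one has to show that the Bergman potential already carries the correct Lelong numbers along those components. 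For $F^h$ this comes from the constancy of sections on fibres (again the rank-one condition), and for $F_1^v$ it requires a uniform lower bound on the fibrewise mass $\int_{X_y}e^{-2\psi}$ as $y$ approaches the discriminant; this is Lemma~\ref{uniformbound}, and it is precisely where the hypothesis $(f_*\mathcal{O}_X(\lceil -D^v\rceil))^{**}\simeq\mathcal{O}_Y$ is used. Your final paragraph gestures at an extension difficulty but does not engage with this absorption step; without it the argument yields no constraint on $L$.
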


\begin{proof}

{\em Step 1: Preparation.}

We start by interpreting the conditions $(a)$ and $(b)$ in a more analytic language.
We can write the divisor $D$ as
$$
D= B - F^v - F^h,
$$ 
where $B, F^v, F^h $ are effective $\Q$-divisors and $F^v$ (resp. $F^h$) is $f$-vertical (resp. $f$-horizontal).
We also decompose $F^v$ as 
$$
F^v  = F_1 ^v + F_2 ^v
$$
such that $\codim_Y f( F_2 ^v) \geq 2$ and $\codim_Y f(E) = 1$ for every irreducible component $E \subset F_1^v$.

Let $X_y$ be a general $f$-fibre. Since $d_j > -1$ for every $D_j$ mapping onto $Y$ (cf. condition (a)),
the divisors $\lceil-D\rceil$ and  $\lceil F^h \rceil$ coincide over a non-empty Zariski open subset of $Y$.
Thus the condition $\rank f_* \mathcal{O}_X (\lceil-D\rceil)=1$ implies that 
$$
h^0 (X_y, \lceil F^h \rceil |_{X_y})=1.
$$
Therefore, for any meromorphic function $\zeta$ on $X_y$, we have
\begin{equation}\label{importantimply}
\DIV (\zeta) \geq - \lceil F^h \rceil |_{X_y} \qquad\Rightarrow \qquad \zeta \text{ is constant.}
\end{equation}
Since $d_j > -1$ for every $D_j$ mapping onto a divisor in $Y$ (cf. condition (a)),
the divisors $\lceil -D ^v\rceil$ and  $\lceil F^v \rceil$ coincide over a Zariski open subset $Y_1 \subset Y$
such that $\codim_Y (Y\setminus Y_1) \geq 2$.
In particular the condition $(f_* \mathcal{O}_X (\lceil-D ^v\rceil))^{**} \simeq \mathcal{O}_Y$ 
implies that $(f_* \mathcal{O}_X (\lceil-D ^v\rceil))|_{Y_1} = \sO_{Y_1}$.
So for every meromorphic function $\zeta$ on any small Stein open subset of $U \subset Y_1$, we have
\begin{equation}\label{importantimply2}
\DIV (\zeta \circ f) \geq - \lceil F^v \rceil|_{\fibre{f}{U}} \qquad\Rightarrow \qquad \zeta \text{ is holomorphic.}
\end{equation}

\medskip

{\em Step 2: Stein cover.} 

Select a Stein cover $( U_i )_{i\in I}$ of $Y$ such that $H^{1,1} (U_i ,\R)= 0$ for every $i$. 
Let $\theta$ be a smooth closed $(1,1)$-form in the same class of $c_1 (K_{X/Y} +\alpha_X +D + \lceil F^v + F^h \rceil)$.

Thanks to $(c)$, we have $c_1( K_{X/Y}+\alpha_X +D) |_{f^{-1} (U_i)} \in f^{-1} (H^{1,1} (U_i, \R)) =0$.
There exists thus a line bundle $L_i$ on $f^{-1} (U_i)$ such that 
$K_{X/Y} +L_i \simeq \lceil F^v + F^h \rceil$ on $f^{-1} (U_i)$.
Moreover, we can find a smooth hermitian metric $h_i$
on $K_{X/Y} + L_i$ over $f^{-1} (U_i)$ such that
\begin{equation}\label{12}
\frac{\sqrt{-1}}{2\pi} \Theta_{h_i} (K_{X/Y} +L_i) =\theta  \qquad\text{on }f^{-1} (U_i).
\end{equation}

\medskip

{\em Step 3: Local construction of metric.}

We construct in this step a canonical function $\varphi_i$ on $f^{-1} (U_i)$ such that
\begin{equation}\label{new4}
\theta +dd^c \varphi_i \geq  \lceil F_1 ^v +F^h \rceil \qquad\text{over }f^{-1} (U_i) \qquad\text{for every }i .
\end{equation}
The function is in fact just the potential of 
the fibrewise Bergman kernel metric mentioned in Remark \ref{bergmankernelconstr}. A more explicit construction is as follows:

Note first that $c_1 (L_i) =\alpha_X +D +\lceil F^v +F^h \rceil$, we can find a metric $h_{L_i}$
on $L_i$ such that 
$$i\Theta_{h_{L_i}} = \alpha_X +[D] +\lceil F^v +F^h \rceil =\alpha_X + [B] + (\lceil F^v +F^h \rceil - [F^v +F^h])\geq 0$$
in the sense of current. Moreover, we can ask that $ h_i /h_{L_i}$ is a global metric on $K_{X/Y}$, i.e., $h_i /h_{L_i}= h_j /h_{L_j}$
on $f^{-1} (U_i \cap U_j)$.

Thanks to the sub-klt condition $(a)$ and the construction of the metric $h_{L_i}$, we can find a Zariski open subset $U_{i,0}$ of $U_i$
such that for every $y\in U_{i,0}$, $f$ is smooth over $y$ and there exists a $s_y \in H^0 (X_y , K_{X/Y} +L_i)$ such that
\begin{equation}\label{adduniquesection}
\int_{X_y} |s_y |_{h_{L_i}} ^{2}  =1 .
\end{equation}
Recall that $|s_y |_{h_{L_i}} ^{2}$ is a volume forme on $X_y$ (cf. Remark \ref{bergmankernelconstr}).
Using the fact that 
\begin{equation}\label{addnew1}
h^0 (X_y, K_{X/Y} +L_i)= h^0 (X_y, \lceil F^h \rceil)=1  \qquad\text{for every }y\in U_{i,0},
\end{equation}
we know that $s_y$ is unique after multiplying by a unit norm complex number.
There exists thus a unique function $\varphi_i$ on $f^{-1} (U_{i,0})$ such that its restriction on 
$X_y$ equals to $\ln |s_y |_{h_i} $. We have the following key property.

\noindent\textbf{Claim:} $\varphi_i$ can be extended to be a quasi-psh function (we still denote it as $\varphi_i$) on $f^{-1} (U_i)$,
and satisfies \eqref{new4}.

The claim will be proved by using the methods in \cite[Thm 0.1]{BP08}. 
We postpone the proof of the claim later and first finish the proof of the theorem. The properties \eqref{importantimply}
and \eqref{importantimply2} will be used in the proof of the claim.

\medskip

{\em Step 4: Gluing process, final conclusion.}

We first prove that 
\begin{equation}\label{coincide}
\varphi_i=\varphi_j \qquad\text{on }f^{-1} (U_i \cap U_j) .
\end{equation}

Let $y\in U_{i,0} \cap U_{j,0}$. 
Since both $(K_{X/Y} +L_i)|_{X_y} \simeq (K_{X/Y} +L_j)|_{X_y} \simeq \lceil F^v +F^h \rceil |_{X_y}$,
we have $L_i |_{X_y} \simeq L_j |_{X_y}$. 
Under this isomorphism, the curvature condition \eqref{12} and $\partial\overline{\partial}$-lemma imply that 
\begin{equation}\label{new3}
h_{L_i} |_{X_y} =h_{L_j} |_{X_y} \cdot e^{-c_y} \qquad\text{for some constant }c_y \text{ on }X_y ,
\end{equation}
where the constant $c_y$ depends on $y\in Y$.
As $h_i /h_{L_i}$ is a metric on $K_{X/Y}$ independent of $i$, we have
\begin{equation}\label{new3cor}
h_{i} |_{X_y} =h_{j} |_{X_y} \cdot e^{-c_y} \qquad \text{ on }X_y .
\end{equation}
By \eqref{addnew1}, there exist unique elements
$s_{y, i} \in H^0 (X_y, K_{X/Y} +L_i)$ and $s_{y, j} \in H^0 (X_y, K_{X/Y} +L_j)$ (after multiply by a unit norm complex number) 
such that 
$$\int_{X_y} |s_{y,i} |_{h_{L_i}} ^2  =1 \qquad\text{and}\qquad\int_{X_y} |s_{y,j} |_{h_{L_j}} ^2   =1 .$$
Thanks to \eqref{new3}, we have (after multiply by a unit norm complex number) 
$$s_{y,i} =e^{\frac{c_y}{2}} \cdot s_{y , j} .$$
Together with \eqref{new3cor}, we get
\begin{equation}\label{fibrewiseestimate}
\varphi_i |_{X_y} = \ln |s_{y,i} |_{h_i}  = \ln |s_{y,j} |_{h_j} = \varphi_j |_{X_y} . 
\end{equation}
Since \eqref{fibrewiseestimate} is proved for every $y\in U_{i,0} \cap U_{j,0} $, we have $\varphi_i  = \varphi_j$ on $ f^{-1} (U_{i,0} \cap U_{j,0})$.
Combining this with the extension property of quasi-psh functions, 
\eqref{coincide} is thus proved.

Thanks to \eqref{coincide}, $( \varphi_i )_{i\in I}$ defines a global quasi-psh function on $X$ which we denote by $\varphi$.
By \eqref{new4}, we have 
$$\theta +dd^c \varphi \geq  \lceil F_1 ^v + F^h \rceil  \qquad\text{ over }f^{-1} (U_i)\qquad \text{for every }i.$$ 

Therefore 
$$\theta +dd^c \varphi \geq  \lceil F_1 ^v +F^h \rceil  \qquad\text{ over } X .$$ 
Then $c_1 (K_{X/Y} +\alpha_X +D +\lceil F_2 ^v\rceil)$ is pseudoeffective on $X$.
Together with the fact $\codim_Y f_* (F_2 ^v ) \geq 2$, the theorem is proved.
\end{proof}

The rest part of this section is devoted to the proof of the claim in Theorem \ref{theoremdirectimage}. The main method is 
the Ohsawa-Takegoshi extension techniques used in \cite{BP10}.
Before the proof of the claim, we need the following lemma which interprets the property \eqref{importantimply2} in terms of a condition on the metric $h_i$.

\begin{lemma}\label{uniformbound}
Fix a K\"ahler metric $\omega_X$ (resp. $\omega_Y$) on $X$ (resp. $Y$).
Let $s_B$ (resp. $s_{F^v}, s_{F^h}$) be the canonical section of the divisor $B$ (resp. $F^v$ and $F^h$).  Let $\psi$ be the function of the form
\begin{equation}\label{constructionsingularmetric}
\psi = \ln |s_B| -\ln |s_{F^v}| -\ln |s_{F^h}| + C^{\infty} ,
\end{equation}
where $|\cdot|$ is with respect to some smooth metric on the corresponding line bundle.
Let $Y_1$ be the open set defined in Step 1 of the proof of Theorem \ref{theoremdirectimage} and 
let $Y_0 \subset Y_1$ be a non-empty Zariski open set satisfying the following conditions :
\begin{enumerate}
\item[\rm (a)] $f$ is smooth over $Y_0$;
\item[\rm (b)] $f(D^v) \subset Y\setminus Y_0$;
\item[\rm (c)] $F^h|_{X_y}$ is snc for every $y \in Y_0$; 
\item[\rm (d)] The property \eqref{importantimply} holds for every $y\in Y_0$.
\end{enumerate}
Then for any open set $\Delta \Subset Y_1 \cap U_i$ (i.e., the closure of $\Delta$ is in $Y_1\cap U_i$), 
there exists some constant $C (\Delta, Y_1 ,U_i) >0$ depending only on $\Delta$, $Y_1$ and $U_i$, 
such that 
\begin{equation}\label{uniformestimate}
\int_{X_y} e^{-2\psi} \omega_X ^m /f^*\omega_Y ^n \geq C (\Delta, Y_1, U_i ) \qquad\text{for every }y \in \Delta \cap Y_0 ,
\end{equation}
where $m$ (resp. $n$) is the dimension of $X$ (resp. $Y$).
\end{lemma}

\begin{remark}\label{importantremark}
The meaning of \eqref{uniformestimate} is that, for any sequence $(y_i)_{i\geq 1}$ converging to a point in $Y_1 \setminus Y_0$, the sequence 
$(\int_{X_{y_i}} e^{-2\psi} \omega_X ^m /f^*\omega_Y ^n )_{i\geq 1}$ will not tend to $0$. 
\end{remark}

\begin{proof}
Fix an open set $\Delta_1$ such that $\Delta \Subset \Delta_1\Subset Y_1 \cap U_i$.
Let $y_0$ be a point in $\Delta \cap Y_0$ and let $c_{y_0}$ be a constant such that 
\begin{equation}\label{added1}
|c_{y_0} |^2 \int_{X_{y_0}} e^{-2\psi} \omega_X ^m / f^*\omega_Y ^n =1 . 
\end{equation}
Let $s_{\lceil F\rceil}$ be the canonical section of $\lceil F^v +F^h\rceil$.
By applying Proposition \ref{keyextension} to $( f^{-1} (\Delta_1) , K_X + L_i, h_{L_i})$ and the section $c_{y_0} \otimes s_{\lceil F \rceil} 
\in H^0 (X_{y_0} , K_X +L_i)$, 
we can find a holomorphic section $\tau \in H^0 (f^{-1} (\Delta_1) , K_X + L_i)$ such that 
$$\tau |_{X_{y_0}} =c_{y_0} \otimes s_{\lceil F \rceil}$$
and
\begin{equation}\label{l2condition}
\int_{f^{-1} (\Delta_1) } |\tau|_{h_{L_i}} ^2 \leq C_1 \int_{X_{y_0} } |\tau|_{h_{L_i}} ^2 
= C_1 | c_{y_0} |^2 \int_{X_{y_0}} e^{-2\psi} \omega_X ^m / f^*\omega_Y ^n =C_1
\end{equation} 
where $C_1$ is a constant independent of $y_0\in \Delta \cap Y_0$.

\medskip

Set $\widetilde{\tau} :=\frac{\tau}{s_{\lceil F \rceil}}$.
Then $\widetilde{\tau}$ can be extended to a meromorphic function (we still denote it by $\widetilde{\tau}$) on $f^{-1} (\Delta_1)$ 
and \eqref{l2condition} implies that
\begin{equation}\label{l2conditionmero}
\int_{f^{-1} (\Delta_1)} |\widetilde{\tau}|^2 e^{-2\psi} \leq C_1
\end{equation}
Therefore
\begin{equation}\label{l2imply}
\DIV (\widetilde{\tau} ) \geq - \lceil F^h \rceil - \lceil F^v \rceil \qquad\text{on } f^{-1} (\Delta_1) .
\end{equation}

\bigskip

We now prove that $\widetilde{\tau}$ is in fact holomorphic on $f^{-1} (\Delta_1)$.
For every point $y\in \Delta_1 \cap Y_0$, thanks to $(b)$, $F^v \cap X_y =\emptyset$. 
Together with \eqref{l2imply} and $(c)$, we have
$$\DIV (\widetilde{\tau} |_{X_y} ) \geq - \lceil F^h |_{X_y}\rceil \qquad\text{on } X_y $$
for every $y\in \Delta_1 \cap Y_0$.
Combining this with $(d)$, 
$\widetilde{\tau} |_{X_y}$ is constant for every $y\in \Delta_1 \cap Y_0$.
Therefore $\widetilde{\tau}$ comes from a meromorphic function on $\Delta_1$. Then $\widetilde{\tau}$ does not have poles along $\supp (F^h)$
and \eqref{l2imply} implies that
$$
\DIV (\widetilde{\tau} ) \geq - \lceil F^v \rceil.
$$
Together with \eqref{importantimply2}, we can find a holomorphic function $\zeta$ on $\Delta_1$ such that $\widetilde{\tau} =\zeta \circ f$.

\bigskip

We now prove the lemma. Let $M \in \mathbb{N}$ large enough such that the $\mathbb{Q}$-divisor $\frac{1}{M-1} F^v +\frac{1}{M-1} F^h$ is klt. Thanks to \eqref{l2conditionmero} and the H\"older inequality, we have
\begin{equation}\label{holdin}
\int_{f^{-1} (\Delta_1)} |\widetilde{\tau}|^\frac{2}{M} \leq 
(\int_{f^{-1} (\Delta_1)} |\widetilde{\tau}|^2 e^{-2\psi})^{\frac{1}{M}} 
(\int_{f^{-1} (\Delta_1)} \frac{|s_{B}|^{\frac{2}{M-1}}}{|s_{F^v} s_{F^h}|^{\frac{2}{M-1}}})^{\frac{M-1}{M}} \leq C_2
\end{equation}
for some uniform constant $C_2$.
Since $\widetilde{\tau} =\zeta \circ f$ and $\zeta$ is holomorphic on $\Delta_1$ and $\Delta\Subset \Delta_1$, by applying maximal principal to $\zeta$, 
\eqref{holdin} implies that 
$$\sup_{z\in \Delta} |\zeta| (z) \leq C_3 \cdot (C_2)^M$$ 
where $C_3$
is a constant depending only on $\Delta$ and $\Delta_1$. In particular, the norm of $c_{y_0} = \tau |_{X_{y_0}} = \zeta (y_0)$ is less than $C_3 \cdot (C_2)^M$.
Combining this with \eqref{added1} and the fact that $C_2$ and $C_3$ are independent of the choice of $y_0 \in \Delta$, the lemma is proved.
\end{proof}

Now we prove the claim in the proof of Theorem \ref{theoremdirectimage}.

\begin{proof}[\bf{Proof of the claim}]

Let $U_{i,0}$ be the open set defined in Step 3 of the proof of Theorem \ref{theoremdirectimage}. 
Thanks to Theorem \ref{maintool}, $\varphi_i$ 
can be extended as a quasi-psh function on $f^{-1} (U_i)$ and satisfying
\begin{equation}\label{posicurrent}
\theta +dd^c \varphi_i \geq 0 \qquad\text{on } f^{-1} (U_i). 
\end{equation}
Let $s_{\lceil F \rceil}$ be the canonical section of $\lceil F^v + F^h \rceil$.
Then $\frac{e^{\varphi_i}}{s_{\lceil F \rceil}}$ is well defined on 
$f^{-1} (U_{i,0})\setminus (F^v + F^h)$.

\medskip

We next prove that $\frac{e^{\varphi_i}}{s_{\lceil F \rceil}}$ is uniformly upper bounded near the generic point of 
$\DIV (F^v + F^h)$.
Let $y$ be a generic point in $U_{i,0}$. By the construction of $s_y$ and \eqref{importantimply},
$\frac{s_y}{s_{\lceil F \rceil}}$ is a constant on $X_y$. 
Then $\frac{e^{\varphi_i}}{s_{\lceil F \rceil}} |_{X_y} = \frac{|s_y|_{h_i}}{s_{\lceil F \rceil}}$ is uniformly bounded on $X_y$.
Therefore $\frac{e^{\varphi_i}}{s_{\lceil F \rceil}}$ is uniformly bounded near the generic point of $\DIV (F^h)$.

For any $\Delta\Subset Y_1 \cap U_i $, thanks to Lemma \ref{uniformbound},
there exists a constant $c >0$, such that 
$$\int_{X_y} e^{-2\psi} (\omega_X ^m /f^*(\omega_Y) ^n ) \geq c \qquad\text{for every }y \in\Delta \cap Y_0 .$$
Together with the facts that
$$\int_{X_y} |\frac{s_y}{s_{\lceil F \rceil}}|^2 e^{-2\psi} =\int_{X_y} |s_y|^2 _{h_{L_i}} =1$$
and $\frac{s_y}{s_{\lceil F \rceil}}$ is constant on $X_y$, we see that
$\frac{e^{\varphi_i}}{s_{\lceil F \rceil}}$ is uniformly upper bounded on $f^{-1} (\Delta\cap Y_0 )$.
Since $\codim_Y (Y\setminus Y_1) \geq 2$ and $f_* (F_1 ^v)$ is of codimension $1$ by assumption, the function 
$\frac{e^{\varphi_i}}{s_{\lceil F \rceil}}$ is uniformly upper bounded near the generic point of $\DIV (F_1 ^v)$.

\medskip

Now we can prove the claim. 
Since $\frac{e^{\varphi_i}}{s_{\lceil F \rceil}}$ 
is proved to be uniformly upper bounded near the generic point of $\DIV ( F_1 ^v +F^h )$, 
the Lelong numbers of 
$dd^c \varphi_i$ at the generic points of $\DIV ( F_1 ^v +F^h)$ is not less than the Lelong numbers of the current 
$\lceil  F_1 ^v +F^h \rceil$ 
at the generic points of $\DIV ( F_1 ^v +F^h)$.
Together with \eqref{posicurrent}, we have
\begin{equation}
\theta +dd^c \varphi_i \geq \lceil  F_1 ^v +F^h \rceil .\qquad\text{on }f^{-1} (U_i) ,
\end{equation}
and the claim is proved.
\end{proof}

\section{Weak subadjunction}

\begin{definition} \label{definitionmodified} \cite[Defn.2.2]{Bou04} 
Let $X$ be a compact K\"ahler manifold, and let $\alpha$ be a cohomology class on $X$.
We say that $\alpha$ is a modified K\"ahler class if it contains
a K\"ahler current $T$ such that the generic Lelong number $\nu(T, D)$ is zero for every prime divisor $D \subset X$.
\end{definition}

By \cite[Prop.2.3]{Bou04} a cohomology class is modified K\"ahler if and only if there exists a modification
$\holom{\mu}{\tilde X}{X}$ and a K\"ahler class $\tilde \alpha$ on $\tilde X$ such that $\mu_* \tilde \alpha = \alpha$.
For our purpose we have to fix some more notation:

\begin{definition} \label{definitionlogresolution}
Let $X$ be a compact K\"ahler manifold, and let $\alpha$ be a modified K\"ahler class on $X$.
A log-resolution of $\alpha$ is a bimeromorphic morphism $\holom{\mu}{\tilde X}{X}$ from a compact K\"ahler manifold $\tilde X$ 
such that  the exceptional locus is a simple normal crossings divisor $\sum_{j=1}^k E_j$ and there exists
a K\"ahler class $\tilde \alpha$ on $\tilde X$ such that $\mu_* \tilde \alpha = \alpha$.
\end{definition}

The definition can easily be extended to arbitrary big classes by using the Boucksom's Zariski decomposition
\cite[Thm.3.12]{Bou04}.

\begin{remark} \label{remarkcoeffs}
If $\holom{\mu}{\tilde X}{X}$ is a log-resolution of $\alpha$ one can write
$$
\mu^* \alpha = \tilde \alpha + \sum_{j=1}^k r_j E_j
$$
and $r_j>0$ for all $j \in \{1, \ldots, k\}$. For $\R$-divisors this is known as the the negativity lemma 
\cite[3.6.2]{BCHM10}, in the analytic setting we proceed as follows: let $T \in \alpha$ be a current with analytic
singularities such that the generic Lelong $\nu(T,D)$ is zero for every prime divisor $D \subset X$.
Resolving the ideal sheaf defining $T$ and pulling back we obtain
$$
\mu^* \alpha = \alpha' + \sum_{j=1}^k r_j' E_j \geq \mu^* \omega
$$
where $\omega$ is a K\"ahler form, $r_j'>0$ for all $j \in \{1, \ldots, k\}$ and $\alpha'$ is semi-positive
with null locus equal to $\cup_{j=1}^k E_j$. For $0<\varepsilon_j \ll 1$ the class $\tilde \alpha :=
\alpha' - \sum_{j=1}^k \varepsilon_j E_j$ is K\"ahler, so the statement holds by setting $r_j:=r_j' + \varepsilon_j$.
\end{remark}

\begin{definition} \label{definitionmaxlc}
Let $X$ be a compact K\"ahler manifold, and let $\alpha$ be a modified K\"ahler class on $X$.
A subvariety $Z \subset X$ is a maximal lc centre if there exists a log-resolution $\holom{\mu}{\tilde X}{X}$ of $\alpha$ 
with exceptional locus $\sum_{j=1}^k E_j$ such that the following holds:
\begin{itemize}
\item $Z$ is an irreducible component of $\mu(\supp \sum_{j=1}^k E_j)$;
\item if we write
$$
K_{\tilde X} + \tilde \alpha = \mu^* (K_X+\alpha) + \sum_{j=1}^k d_j E_j,
$$
then $d_j \geq -1$ for every $E_j$ mapping onto $Z$ and (up to renumbering) we have $\mu(E_1)=Z$ and $d_1=-1$. 
\end{itemize}
\end{definition}

Following the terminology for singularities of pairs we call the coefficients $d_j$ the discrepancies of $(X, \alpha)$.
Note that this terminology is somewhat abusive since $d_j$ is not determined by the class $\alpha$ but depends on the choice
of $\tilde \alpha$ (hence implicitly on the
choice of a K\"ahler current $T$ in $\alpha$ that is used to construct the log-resolution). Similarly it would be more appropriate
to define $Z$ as an lc centre of the pair $(X, T)$ with $[T] \in \alpha$. Since most of the time we will only work with the
cohomology class we have chosen to use this more convenient terminology.

We can now prove the weak subadjunction formula:

\begin{proof}[Proof of Theorem \ref{theoremweaksubadjunction}]

{\em Step 1. Geometric setup.}
Since $Z \subset X$ is a maximal lc centre of $(X, \alpha)$ there exists a log-resolution $\holom{\mu}{\tilde X}{X}$ of $\alpha$ 
with exceptional locus $\sum_{j=1}^k E_j$ such that 
$Z$ is an irreducible component of $\mu(\supp \sum_{j=1}^k E_j)$
and
\begin{equation} \label{decompalpha}
K_{\tilde X} + \tilde \alpha = \mu^* (K_X+\alpha) + \sum_{j=1}^k d_j E_j,
\end{equation}
satisfies $d_j \geq -1$ for every $E_j$ mapping onto $Z$ and (up to renumbering) we have $\mu(E_1)=Z$ and $d_1=-1$. 
Let $\holom{\pi}{X'}{X}$ be an embedded resolution of $Z$, then (up to blowing up further $\tilde X$) we can
suppose that there exists a factorisation $\holom{\psi}{\tilde X}{X'}$. Let $Z' \subset X'$ be the strict transform of $Z$.
Since $\pi$ is an isomorphism in the generic point of $Z'$, the divisors $E_j$ mapping onto $Z'$ via $\psi$ are exactly those mapping
onto $Z$ via $\mu$. Denote by $Q_l \subset Z'$ the prime divisors that are images of divisors
$E_1 \cap E_j$ via $\psi|_{E_1}$. Then we can suppose 
(up to blowing up further $\tilde X$) that the divisor
$$
\sum_l (\psi|_{E_1})^* Q_l + \sum_{j=2}^k E_1 \cap E_j
$$
has a support with simple normal crossings. 
We set 
$$
f:=\psi|_{E_1}, \qquad \mbox{and} \qquad D= - \sum_{j=2}^k d_j D_j
$$
where $D_j := E_j \cap E_1$.
Note also that the desingularisation $\pi|_{Z'}$ factors through the 
normalisation $\holom{\nu}{\tilde Z}{Z}$, so we have a bimeromorphic morphism
$\holom{\tau}{Z'}{\tilde Z}$ such that $\pi|_{Z'}=\nu \circ \tau$.
We summarise the construction in a commutative diagram:
$$
\xymatrix{
& & E_1 \ar[lldd]_{f:=\psi|_{E_1}} \ar[rrdd]  \ar @{^{(}->}[d] & &
\\
& & \tilde X  \ar[ld]_\psi \ar[rd]^\mu  & &
\\
Z' \ar @{^{(}->}[r] \ar[rrd]_\tau & X'  \ar[rr]^\pi & & X & Z \ar @{_{(}->}[l]
\\
& & \tilde Z \ar[rru]_\nu & &
}
$$

A priori there might be more than one divisor with discrepancy $-1$ mapping onto $Z$, but we can
use the tie-breaking technique which is well-known in the context of singularities of pairs:
recall that the class $\tilde \alpha$ is K\"ahler which is an open property. Thus we can choose $0 < \varepsilon_j \ll 1$
for all $j \in \{2, \ldots, k\}$ such that the class
$\tilde \alpha +  \sum_{j=2}^k \varepsilon_j E_j$ is K\"ahler.
The decomposition
$$
K_{\tilde X} + (\tilde \alpha+  \sum_{j=2}^k \varepsilon_j E_j) = \mu^* (K_X+\alpha) - E_1 + \sum_{j=2}^k (d_j+\varepsilon_j) E_j
$$
still satisfies the properties in Definition \ref{definitionmaxlc} and $E_1$ is now the unique divisor with discrepancy $-1$ mapping onto
$Z$. Note that up to perturbing $\varepsilon_j$ we can suppose that
$d_j+\varepsilon_j$ is rational for every $j \in \{1, \ldots, k\}$. In order to simplify the notation we will suppose without loss of generality, that these properties already holds for the decomposition \eqref{decompalpha}.

{\em Outline of the strategy.} The geometric setup above is analogous to the proof of Kawamata's subadjunction formula
\cite[Thm.1]{Kaw98} and as in Kawamata's proof our aim is now to apply the positivity theorem \ref{theoremdirectimage} to $f$ to relate $K_{Z'}$ and $(\pi|_{Z'})^*(K_X+\alpha)|_Z$. However since we deal with an lc centre that is not minimal we encounter
some additional problems: the pair $(E_1, D)$ is not necessarily (sub-)klt and the centre $Z$ might not be regular in codimension one.
In the end this will not change the relation between $K_{Z'}$ and $(\pi|_{Z'})^*(K_X+\alpha)|_Z$, but it leads to some technical computations which will be carried out in the Steps 3 and 4.

{\em Step 2. Relative vanishing.}
Note that the $\Q$-divisor $-K_{\tilde X} - E_1 +  \sum_{j=2}^k d_j E_j$ is $\mu$-ample since
its class is equal to $\tilde \alpha$ on the $\mu$-fibres. Thus we can apply the relative Kawamata-Viehweg
theorem (in its analytic version \cite[Thm.2.3]{Anc87} \cite{Nak87}) to obtain that
$$
R^1 \mu_* \sO_{\tilde X}(-E_1 + \sum_{j=2}^k \lceil d_j \rceil E_j) = 0.
$$
Pushing the exact sequence
$$
0 \rightarrow \sO_{\tilde X}(-E_1 + \sum_{j=2}^k \lceil d_j \rceil E_j) 
\rightarrow \sO_{\tilde X}(\sum_{j=2}^k \lceil d_j \rceil E_j)
\rightarrow \sO_{E_1}(\lceil -D \rceil)
\rightarrow 0
$$
down to $X$, the vanishing of $R^1$ yields a surjective map
\begin{equation} \label{surjection1}
\mu_* (\sO_{\tilde X}(\sum_{j=2}^k \lceil d_j \rceil E_j)) \rightarrow (\mu|_{E_1})_*(\sO_{E_1}(\lceil -D \rceil)).
\end{equation}
Since all the divisors $E_j$ are $\mu$-exceptional, we see that $\mu_* (\sO_{\tilde X}(\sum_{j=2}^k \lceil d_j \rceil E_j))$
is an ideal sheaf $\sI$. Moreover, since $d_j>-1$ for all $E_j$ mapping onto $Z$ 
the sheaf $\sI$ is isomorphic to the structure sheaf in the generic point of $Z$ . 
In particular $(\mu|_{E_1})_*(\sO_{E_1}(\lceil -D \rceil ))$
has rank one.

{\em Step 3. Application of the positivity result.}
By the adjunction formula we have
\begin{equation} \label{help0}
K_{E_1} + \tilde \alpha|_{E_1} - \sum_{j=2}^k d_j (E_j \cap E_1) 
= f^* (\pi|_{Z'})^*(K_X+\alpha)|_Z.
\end{equation}
Since $f$ coincides with $\mu|_{E_1}$ over the generic point of $Z'$, we know by Step 2 that 
the direct image sheaf  $f_* (\sO_{E_1}( \lceil -D \rceil))$ has rank one.
In particular $f$ has connected fibres.

In general the boundary $D$ does not satisfy the conditions a) and b) in Theorem \ref{theoremdirectimage},
however we can still obtain some important information by applying Theorem \ref{theoremdirectimage} for
a slightly modified boundary:
note first that the fibration $f$ is equidimensional over the complement of a codimension two set. In particular
the direct image sheaf $f_* (\sO_{E_1}( \lceil -D \rceil))$ is reflexive \cite[Cor.1.7]{Ha80}, 
hence locally free, on the complement of a codimension two set. Thus we can consider the 
first Chern class $c_1(f_* (\sO_{E_1}(\lceil -D \rceil)))$ (cf. Definition \ref{definitionc1}). Set 
$$
L := (\pi|_{Z'})^*(K_X+\alpha)|_Z - K_{Z'},
$$
then we claim that 
\begin{equation} \label{withboundary}
\left(
L+c_1(f_* (\sO_{E_1}(\lceil -D \rceil)))
\right) 
\cdot \omega_1' \cdot \ldots \cdot \omega'_{\dim Z-1} \geq 0
\end{equation} 
for any collection of nef classes $\omega_j'$ on $Z'$.

{\em Proof of the inequality \eqref{withboundary}.}
In the complement of a codimension two subset $B \subset Z'$ the 
fibration $f|_{\fibre{f}{Z' \setminus B}}$ is equidimensional, so the
direct image sheaf $\sO_{E_1}(\lceil -D^v \rceil)$
is reflexive. Since it has rank one we thus can write 
$$
f_*(\sO_{E_1}(\lceil -D^v \rceil)) \otimes \sO_{Z' \setminus B}
= \sO_{Z' \setminus B}(\sum e_l Q_l)
$$ 
where $e_l \in \Z$ and $Q_l \subset Z'$ are the prime divisors introduced in the geometric setup. 
If $e_l>0$ then $e_l$ is the largest integer such that
$$
(f|_{\fibre{f}{Z' \setminus B}})^*(e_l  Q_l) \subset \lceil -D^v \rceil.
$$
In particular if $D_j$ maps onto $Q_l$, then $d_j>-1$. If $e_l<0$ there exists a divisor $D_j$
that maps onto $Q_l$ such that $d_j \leq -1$. Moreover if $w_j$ is the coefficient of $D_j$ in the pull-back 
$(f|_{\fibre{f}{Z' \setminus B}})^* Q_l$, then $e_l$ is the largest integer such that 
$d_j - e_l w_j >-1$ for every divisor $D_j$ mapping onto $Q_l$. 
Thus if we set
$$
\tilde D := D + \sum e_l f^*  Q_l,
$$
then $\tilde D$ has normal crossings support (cf. Step 1) and satisfies the condition a) in Theorem \ref{theoremdirectimage}. 
Moreover if we denote by $\tilde D= \tilde D^h + \tilde D^v$ the decomposition
in horizontal and vertical part, then $\tilde D^h=D^h$ and $\tilde D^v= D^v + \sum e_l f^*  Q_l$.
Since we did not change the horizontal part, the direct image $f_* (\sO_{E_1}( \lceil -\tilde D \rceil))$ has rank one.
Since $\sum e_l f^*  Q_l$ has integral coefficients, the projection formula shows that
$$
(f_* (\sO_{E_1}( \lceil -\tilde D^v \rceil)))^{**} \simeq (f_* (\sO_{E_1}( \lceil -D^v \rceil)))^{**} 
\otimes \sO_{Z'}(- \sum e_l Q_l)
 \simeq \sO_{Z'}.
$$
Thus we satisfy the condition b) in Theorem \ref{theoremdirectimage}. Finally note that
$$
K_{E_1/Z} + \tilde \alpha|_{E_1}+ \tilde D = f^* (L+\sum e_l Q_l).
$$
So if we set $\tilde L:= L+\sum e_l Q_l$, then
\begin{equation} \label{invariance}
\tilde L + c_1(f_* (\sO_{E_1}( \lceil -\tilde D \rceil))) = L + c_1(f_* (\sO_{E_1}( \lceil -D \rceil))). 
\end{equation}
Now we apply Theorem \ref{theoremdirectimage} and obtain
$$
\tilde L \cdot \omega_1' \cdot \ldots \cdot \omega_{\dim Z'-1}' \geq 0.
$$
Yet by the conditions a) and b) there exists an ideal sheaf $\sI$ on $Z'$ that has cosupport of codimension at least two 
and $f_*(\sO_{E_1}( \lceil -\tilde D \rceil)) \simeq \sI \otimes \sO_{Z'}(B)$ with $B$ an effective divisor on $Z'$.
Thus $c_1(f_*(\sO_{E_1}( \lceil -\tilde D \rceil)))$ is represented by the effective divisor $B$
and \eqref{withboundary} follows from \eqref{invariance}.

{\em Step 4. Final computation.}
In view of our definition of the intersection product
on $\tilde Z$ (cf. Definition \ref{definitionintersection}) we are done if we prove that
$$
L \cdot \tau^* \omega_1 \cdot \ldots \cdot \tau^* \omega_{\dim Z-1} \geq 0
$$
where the $\omega_j$ are the nef cohomology classes from the statement of Theorem \ref{theoremweaksubadjunction}.
We claim that
\begin{equation} \label{bidual}
c_1(f_* (\sO_{E_1}(\lceil -D \rceil))) = - \Delta_1 + \Delta_2
\end{equation}
where $\Delta_1$ is an effective divisor and $\Delta_2$ is a divisor such that $\pi|_{Z'}(\supp \Delta_2)$ has codimension
at least two in $Z$. Assuming this claim for the time being let us see how to conclude:
by \eqref{withboundary} we have
\begin{equation} \label{help1}
(L+c_1(f_* (\sO_{E_1}(\lceil -D \rceil)))) \cdot \tau^* \omega_1 \cdot \ldots \cdot \tau^* \omega_{\dim Z-1} \geq 0.
\end{equation}
Since the normalisation $\nu$ is finite and 
$\pi|_{Z'}(\supp \Delta_2)$ has codimension at least two in $Z$, we see that $\tau(\supp \Delta_2)$ has codimension at least two 
in $\tilde Z$. Thus we have 
$$
c_1(f_* (\sO_{E_1}(\lceil -D \rceil))) \cdot \tau^* \omega_1 \cdot \ldots \cdot \tau^* \omega_{\dim Z-1}
= - \Delta_1 \cdot \tau^* \omega_1 \cdot \ldots \cdot \tau^* \omega_{\dim Z-1} \leq 0.
$$
Hence the statement follows from \eqref{help1}.

{\em Proof of the equality \eqref{bidual}.}
Applying as in Step 2 the relative Kawamata-Viehweg vanishing theorem to the morphism $\psi$ we obtain a surjection
$$
\psi_* (\sO_{\tilde X}(\sum_{j=2}^k \lceil d_j \rceil E_j)) \rightarrow (\psi|_{E_1})_* (\sO_{E_1}(\lceil -D \rceil))
$$
In order to verify \eqref{bidual} note first that some of the divisors $E_j$ might not be $\psi$-exceptional, so it is not clear
if $\psi_* (\sO_{\tilde X}(\sum_{j=2}^k \lceil d_j \rceil E_j))$ is an ideal sheaf. However if we restrict the surjection \eqref{surjection1}
to $Z$ we obtain a surjective map
\begin{equation} \label{surjection2}
\sI \otimes_{\sO_X} \sO_Z \rightarrow (\pi|_{Z'})_* (f_* (\sO_{E_1}(\lceil -D \rceil))),
\end{equation}
where $\sI$ is the ideal sheaf introduced in Step 2.
There exists an analytic set $B \subset Z$ of codimension at least two such that 
$$
Z' \setminus \fibre{\pi}{B} \rightarrow Z \setminus B
$$
is isomorphic to the normalisation of $Z \setminus B$. In particular the restriction of $\pi$ to $Z' \setminus \fibre{\pi}{B}$
is finite, so the natural map
$$
(\pi|_{Z'})^* (\pi|_{Z'})_* (f_* (\sO_{E_1}(\lceil -D \rceil)))
\rightarrow f_* (\sO_{E_1}(\lceil -D \rceil))
$$
is surjective on $Z' \setminus \fibre{\pi}{B}$. Pulling back is right exact, so composing with the surjective map \eqref{surjection2}
we obtain a map from an ideal sheaf to $f_* (\sO_{E_1}(\lceil -D \rceil))$ that is surjective 
on $Z' \setminus \fibre{\pi}{B}$. An ideal sheaf is torsion-free, so this map is an isomorphism 
onto its image in $\sJ \subset f_* (\sO_{E_1}(\lceil -D \rceil))$. In the complement of a codimension two set the sheaf
$\sJ$ corresponds to an antieffective divisor $-\Delta_1'$. Since the inclusion
$\sJ \subset f_* (\sO_{E_1}(\lceil -D \rceil))$ is an isomorphism on $Z' \setminus \fibre{\pi}{B}$, there
exists an effective divisor $\Delta_2'$ with support in $\fibre{\pi}{B}$ such that $c_1(f_* (\sO_{E_1}(\lceil -D \rceil)))=-\Delta_1'+\Delta_2'$. We denote by $\Delta_1$ the part of $\Delta_1'$
whose support is not mapped into $B$ (hence maps into the non-normal locus of $Z \setminus B$) 
and set $\Delta_2 := \Delta_2' + \Delta_1 - \Delta_1'$.
Then we have $c_1(f_* (\sO_{E_1}(\lceil -D \rceil))) = - \Delta_1 + \Delta_2$
and the support of $\Delta_2$ maps into $B$.
Since $B$ has codimension at least two this proves the equality \eqref{bidual}.
\end{proof}

\begin{remark}
In Step 3 of the proof of Theorem \ref{theoremweaksubadjunction} above we introduce 
a ``boundary'' $c_1(f_*(\sO_M( \lceil -D \rceil)))$ so that we can apply Theorem \ref{theoremdirectimage}.
One should note that this divisor is fundamentally different from the divisor $\Delta$ appearing
in \cite[Thm.1, Thm.2]{Kaw98}.
In fact for a minimal lc centre Kawamata's arguments show that $c_1(f_*(\sO_M( \lceil -D \rceil)))=0$,
his boundary divisor $\Delta$ is defined in order to obtain the stronger result 
that $L-\Delta$ is nef. We have to introduce $c_1(f_*(\sO_M( \lceil -D \rceil)))$ since we want to deal with non-minimal
centres.
\end{remark}

\section{Positivity of relative adjoint classes, part 2}

{\bf Convention :} In this section, we use the following convention. Let $U$ be a open set and $(f_m)_{m\in \N}$ be a sequence of smooth functions on $U$. We say that
$$\|f_m\|_{C^{\infty} (U)} \rightarrow 0 ,$$
if for every open subset $V\Subset U$ and every index $\alpha$, we have
$$ \|\partial^\alpha f_m\|_{C^0 (V)} \rightarrow 0 .$$
Similarly, in the case $(f_m)_{m\in \N}$ are smooth formes, we say that $\|f_m\|_{C^{\infty} (U)} \rightarrow 0$ if every component tends to $0$ in the above sense.

\medskip

Before giving the main theorem of this section, we need two preparatory lemmas. The first comes from \cite[Part II, Thm 1.3]{Lae02} :

\begin{lemma}\label{approxlemma}\cite[Part II, Thm 1.3]{Lae02}
Let $X$ be a compact K\"{a}hler manifold and let $\alpha$ be a closed smooth real $2$-form on $X$. 
Then we can find a strictly increasing sequence of integers $(s_m)_{m\geq 1}$ and a sequence of hermitian line bundles (not necessary holomorphic) 
$(F_m, D_{F_m} , h_{F_m})_{m \geq 1}$ on $X$ such that 
\begin{equation}\label{importlimito}
\lim_{m\rightarrow +\infty}\| \frac{\sqrt{-1}}{2\pi}\Theta_{h_{F_m}} (F_m) - s_m  \alpha\|_{C^{\infty} (X)} =0.
\end{equation}
Here $D_{F_m}$ is a hermitian connection with respect to the smooth hermitian metric $h_{F_m}$ and $\Theta_{h_{F_m}} (F_m) = D_{F_m} \circ D_{F_m}$.

Moreover, let $(W_j)$ be a small Stein cover of $X$ and let $e_{F_m,j}$ be a basis of an isometric trivialisation of $F_m$ over $W_j$ i.e., 
$\| e_{F_m,j}\|_{h_m}= 1$.
Then we can ask the hermitian connections $D_{F_m}$ (under the basis $e_{F_m, j}$) to satisfy the following additional condition:
for the $(0,1)$-part of $D_{F_m}$  on $W_j$ : $D_{F_m} '' =\overline{\partial} + \beta^{0,1} _{m,j}$, we have
\begin{equation}\label{importlimito2}
\|\frac{1}{s_m } \beta^{0,1} _{m,j} \|_{C^{\infty} (W_j)} \leq C \|\alpha\|_{C^{\infty} (X)} ,
\end{equation}
where $C$ is a uniform constant independent of $j$ and $m$.
\end{lemma}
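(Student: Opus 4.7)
The plan is to realise $s_m \alpha$ approximately as the first Chern curvature form of a smooth (not necessarily holomorphic) hermitian complex line bundle. The approach combines rational approximation of the real cohomology class $[\alpha]$ by integral classes (with common denominators $s_m$), the topological classification of smooth complex line bundles, and a Hodge-theoretic construction of a hermitian connection with prescribed curvature.

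First, I fix a basis of $H^2(X, \R)$ and an auxiliary norm $\|\cdot\|$ on this finite-dimensional space. Applying Dirichlet's simultaneous approximation theorem to the coordinates of $[\alpha]$ in this basis, I obtain a strictly increasing sequence of positive integers $(s_m)$ and integral classes $c_m \in H^2(X, \Z)$ with $\|s_m[\alpha] - c_m\| \to 0$ arbitrarily fast. Since smooth complex line bundles on $X$ are classified up to isomorphism by $H^2(X, \Z)$, I choose a smooth line bundle $F_m$ with $c_1(F_m) = c_m$ and fix any smooth hermitian metric $h_{F_m}$.

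Next, I use Hodge theory on $(X, \omega_X)$ to construct the connection. Let $H_m$ denote the harmonic representative of $c_m - s_m[\alpha] \in H^2(X, \R)$; by continuity of the harmonic projection, $\|H_m\|_{C^\infty(X)} \to 0$. The smooth real closed $2$-form $\omega_m := s_m\alpha + H_m$ represents $c_1(F_m)$ in de Rham cohomology, and hence can be realised as $\frac{\sqrt{-1}}{2\pi}\Theta_{h_{F_m}}(F_m)$ for a suitable hermitian connection $D_{F_m}$: any two hermitian connections on $(F_m, h_{F_m})$ differ by multiplication by $i$ times a real $1$-form, so the set of their curvatures is an affine space parallel to the space of exact real $2$-forms, and every smooth representative of $c_1(F_m)$ therefore arises. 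This yields \eqref{importlimito}.

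For the local bound \eqref{importlimito2}, on a small Stein (hence contractible) $W_j$ with isometric trivialization $e_{F_m, j}$, write $D_{F_m} = d + A_{m,j}$ with $A_{m,j}$ a purely imaginary smooth $1$-form satisfying $dA_{m,j} = -2\pi i\, \omega_m$. By the Poincar\'e lemma with a bounded homotopy operator on $W_j$, this equation admits a solution with $\|A_{m,j}\|_{C^\infty(W_j)} \leq C_j \|\omega_m\|_{C^\infty(W_j)}$; the residual closed-form ambiguity is absorbed into a change of the isometric trivialization (which shifts $A_{m,j}$ by $i\, d\theta$ for a real function $\theta$, thus preserving hermitian compatibility). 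Dividing by $s_m$ and noting that $\|\omega_m/s_m\|_{C^\infty} \leq \|\alpha\|_{C^\infty(X)} + o(1)$ gives $\|\tfrac{1}{s_m}\beta^{0,1}_{m,j}\|_{C^\infty(W_j)} \leq \|\tfrac{1}{s_m}A_{m,j}\|_{C^\infty(W_j)} \leq C \|\alpha\|_{C^\infty(X)}$, as required. The most delicate step is ensuring that the locally chosen trivializations patch to a single global hermitian connection on $F_m$; this is the main obstacle, but is handled by the standard \v{C}ech--de Rham argument, using that the $U(1)$-valued gauge transformations $e^{i\theta}$ preserve the topological isomorphism class of $F_m$ and leave the bounds on $A_{m,j}$ invariant.
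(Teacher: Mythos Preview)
Your argument is correct and follows essentially the same strategy as the paper, only in reverse order. The paper first quotes Laeng to obtain closed integral forms $\alpha_m$ with $\|\alpha_m - s_m\alpha\|_{C^\infty(X)}\to 0$, then chooses bounded local primitives $\beta_{m,j}$ on each $W_j$ via the Poincar\'e lemma, and finally assembles the hermitian line bundle \emph{from} the data $(\beta_{m,j})$ using the standard \v{C}ech--de~Rham construction (Demailly, V, Thm.~9.5), so that $D_{F_m}=d+\tfrac{\sqrt{-1}}{2\pi}\beta_{m,j}$ in the resulting isometric frames and the bound \eqref{importlimito2} is automatic. You instead produce the global connection with prescribed curvature $\omega_m=s_m\alpha+H_m$ first, and then adjust the isometric frame on each $W_j$ so that the connection $1$-form coincides with a bounded Poincar\'e primitive. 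Both routes are equivalent; your use of Dirichlet plus harmonic representatives is exactly what underlies the statement you would otherwise quote from Laeng.

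One remark: your closing sentence about ``ensuring that the locally chosen trivialisations patch to a single global hermitian connection'' is confused and unnecessary. You already fixed $D_{F_m}$ globally in the previous paragraph; multiplying $e_{F_m,j}$ by a $U(1)$-valued function on $W_j$ changes only the local expression $A_{m,j}$ of that \emph{same} connection, not the bundle or the connection itself, and the choices on different $W_j$ are completely independent. The \v{C}ech--de~Rham argument you allude to is precisely what the paper uses going the other way (building $F_m$ out of the $\beta_{m,j}$), but in your top-down setup there is nothing left to glue.
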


\begin{proof}
Thanks to \cite[Part II, Thm 1.3]{Lae02}, we can find a strictly increasing integer sequence $(s_m)_{m\geq 1}$ 
and closed smooth $2$-forms $(\alpha_m)_{m\geq 1}$ on $X$, such that
$$\lim_{m\rightarrow +\infty}\|\alpha_m - s_m \alpha\|_{C^{\infty} (X)} =0 \qquad\text{and} \qquad \alpha_m \in H^2 (X, \Z) .$$
Since $(W_j)$ are small Stein open sets, we can find some smooth $1$-forms $\beta_{m, j}$ on $W_j$ such that
\begin{equation}\label{lemmaadd}
\frac{1}{2\pi} \cdot d\beta_{m,j} = \alpha_m \text{ on }W_j \qquad\text{and}\qquad \|\frac{1}{s_m} \beta_{m,j} \|_{C^{\infty} (W_j)} \leq C\|\alpha\|_{C^{\infty} (X)}
\end{equation}
for a constant $C$ independent of $m$ and $j$.

By using the standard construction (cf. for example \cite[V, Thm 9.5]{Dem}), the form $(\beta_{m,j})_j$ induces a hermitian line bundle $(F_m, D_m, h_{F_m})$ on $X$
such that $D_m = d + \frac{\sqrt{-1}}{2\pi} \beta_{m,j}$ with respect to an isometric trivialisation over $W_j$. 
Then 
$$\|\frac{\sqrt{-1}}{2\pi}\Theta_{h_{F_m}} (F_m) - s_m \alpha\|_{C^{\infty} (X)}= \|\alpha_m - s_m\alpha\|_{C^{\infty} (X)} \rightarrow 0 .$$
Let $\beta^{0,1} _{m,j}$ be the $(0,1)$-part of $\beta_{m,j}$. Then
\eqref{lemmaadd} implies \eqref{importlimito2}.
\end{proof}

Now we can prove the main theorem of this section.

\begin{theorem} \label{theoremdirectimage2}
Let $X$ and $Y$ be two compact K\"ahler manifolds and let $f: X\rightarrow Y$ be a surjective map with connected fibres such that the general fibre $F$ is simply connected and
$$
H^0(F, \Omega^2_F) = 0. 
$$
Let $\omega$ be a K\"ahler form on $X$ such that $c_1 (K_F) +[\omega|_F]$ is a pseudoeffective class.
Then $c_1 (K_{X/Y})+[\omega]$ is pseudoeffective.
\end{theorem}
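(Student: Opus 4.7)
My plan follows the outline sketched at the end of the introduction: first construct a quasi-psh potential over a large Zariski open subset of the base by the technique of Theorem \ref{theoremdirectimage}, then extend it across the remaining locus using Laeng's sequence of almost-holomorphic line bundles together with Ohsawa-Takegoshi estimates.

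\textbf{Phase 1: Potential over $Y_0$.} The hypotheses that $F$ is simply connected and $H^0(F,\Omega^2_F)=0$, together with upper semi-continuity of Hodge numbers, provide a Zariski open $Y_0\subset Y$ over which $f$ is smooth and $R^i f_*\mathcal{O}_X=0$ for $i=1,2$. Choose a Stein covering $(U_i)$ of $Y_0$ with $H^{1,1}(U_i,\mathbb{R})=0$, and let $h$ be the reference metric on $K_{X/Y}$ with curvature $\beta$. The vanishing of $R^1 f_*\mathcal{O}_X$ on $Y_0$ together with the Lefschetz $(1,1)$-theorem and $\partial\bar\partial$-lemma allow me, just as in Step 2 of the proof of Theorem \ref{theoremdirectimage}, to realise $\beta+\omega$ restricted to $f^{-1}(U_i)$ as $\beta+\frac{\sqrt{-1}}{2\pi}\Theta_{h_i}(L_i)$ for a hermitian line bundle $L_i$ with $K_{X/Y}+L_i$ trivial on $f^{-1}(U_i)$. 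Applying the fibrewise Bergman kernel construction of Theorem \ref{maintool} yields potentials $\varphi_i$ satisfying $\beta+\omega+dd^c\varphi_i\ge 0$; the gluing argument of Step~4 of Theorem \ref{theoremdirectimage} (which uses the uniqueness $h^0(X_y,K_{X/Y}+L_i)=h^0(X_y,\mathcal{O})=1$) patches them into a global quasi-psh function $\varphi$ on $f^{-1}(Y_0)$.

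\textbf{Phase 2: Laeng approximation on $X$.} To extend $\varphi$ across $f^{-1}(Y\setminus Y_0)$ I would apply Lemma \ref{approxlemma} to the closed smooth real $(1,1)$-form $\alpha:=\beta+\omega$, producing integers $k_m\nearrow\infty$ and hermitian (not necessarily holomorphic) line bundles $(F_m,D_{F_m},h_{F_m})$ on $X$ with
$$\bigl\|\tfrac{\sqrt{-1}}{2\pi}\Theta_{h_{F_m}}(F_m)-k_m\alpha\bigr\|_{C^\infty(X)}\to 0,$$
together with the uniform control \eqref{importlimito2} on the $(0,1)$-components $\beta^{0,1}_{m,j}$ of the connection. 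The key observation is that $H^0(X_y,\Omega^2_{X_y})=0$ forces the integrability of the almost holomorphic structure $\bar\partial+\beta^{0,1}_{m,j}$ in the fibre direction, so that $F_m|_{X_y}$ is a genuine holomorphic line bundle for every smooth fibre $X_y$; simple connectedness then pins down this structure up to isomorphism.

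\textbf{Phase 3: Fibrewise Bergman kernels and limit.} For each smooth fibre $X_y$ and each $m$, define the fibrewise $k_m$-Bergman kernel metric of $(F_m|_{X_y},h_{F_m})$, and set $\psi_m:=\tfrac{1}{k_m}\log|\cdot|_{\mathrm{Berg},m}$. Using the $\partial\bar\partial$-lemma on $X_y$ (applicable because $R^1 f_*\mathcal{O}_X$ vanishes on $Y_0$), one compares $\varphi|_{X_y}$ with $\psi_m|_{X_y}$ up to a defect controlled by $\|\tfrac{1}{k_m}\Theta_{h_{F_m}}(F_m)-\alpha\|_{C^\infty}\to 0$. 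Applying the $L^{2/k_m}$ Ohsawa-Takegoshi extension of Lemma \ref{extensionlemma} to local sections near a point in $f^{-1}(Y\setminus Y_0)$ and taking logarithms gives a uniform upper bound on $\psi_m$ in a Stein neighbourhood of such a point, independent of how close the base point lies to $Y\setminus Y_0$. Consequently $\varphi$ itself is uniformly bounded above near $f^{-1}(Y\setminus Y_0)$, so its upper semicontinuous regularisation extends to a global quasi-psh function $\Phi$ on $X$ with $\beta+\omega+dd^c\Phi\ge 0$, which is exactly pseudoeffectivity of $c_1(K_{X/Y})+[\omega]$.

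\textbf{Main obstacle.} As already flagged in the introduction, the delicate point is Phase~3: because $F_m$ is only \emph{almost} holomorphic, one must verify that the Ohsawa-Takegoshi estimate tolerates the $(0,1)$-defect $\beta^{0,1}_{m,j}$ (whence the importance of the uniform bound \eqref{importlimito2}), and that the comparison between $\varphi|_{X_y}$ and the Bergman metric on $(F_m|_{X_y},h_{F_m})$ survives division by $k_m$ and passage to the limit. Controlling these two error sources simultaneously so that they do not blow up as $y\to Y\setminus Y_0$ is the heart of the argument.
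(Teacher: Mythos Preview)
Your Phase 1 contains a genuine gap. You claim to realise $\omega|_{f^{-1}(U_i)}$ as the curvature of a holomorphic line bundle $L_i$ with $K_{X/Y}+L_i$ trivial, ``just as in Step~2 of Theorem \ref{theoremdirectimage}''. But that step worked only because of condition (c) there, which forces $[\beta+\alpha_X+D]|_{f^{-1}(U_i)}$ to lie in $f^*H^{1,1}(U_i,\R)=0$; the Lefschetz theorem is then being applied to the \emph{trivial} class. In the present theorem there is no such hypothesis: $[\omega]|_{f^{-1}(U_i)}$ is in general a transcendental real class, not in the image of $H^2(f^{-1}(U_i),\Z)$, so no holomorphic line bundle has it as first Chern class. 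The vanishing of $R^1 f_*\sO_X$ and $R^2 f_*\sO_X$ only gives $H^{0,2}(f^{-1}(U_i))=0$, which says every \emph{integral} $(1,1)$-class on $f^{-1}(U_i)$ is a Chern class; it does nothing to make $[\omega]$ integral.

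The paper repairs this by reversing the order you propose: the Laeng approximation is invoked \emph{first}, already to build the potential over $Y_0$. One approximates $s_m(\beta+\omega)$ by curvatures of almost-holomorphic bundles $F_m$ on all of $X$; because $c_1(F_m)$ is genuinely integral, the vanishing of $H^{0,2}(f^{-1}(U_i))$ now \emph{does} convert $F_m|_{f^{-1}(U_i)}$ into a holomorphic line bundle $L_{i,m}$ with nearby curvature. The local potentials $\varphi_i$ are then obtained as upper-regularised $\limsup_m$ of the $s_m$-Bergman kernels $\varphi_{i,m}$ of these $L_{i,m}$, and both the gluing over $U_i\cap U_j$ and the uniform upper bound near $Y\setminus Y_0$ are argued for the $\varphi_{i,m}$ by comparing with $(F_m,h_{F_m})$ at the \emph{same} index $m$. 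So the Laeng sequence is not merely an extension device at the boundary as in your Phases~2--3; it is what makes the construction over $Y_0$ possible in the first place. Your Phase~3, which compares a hypothetical $m$-independent $\varphi$ with $\psi_m$, has no object to compare once Phase~1 collapses.
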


\begin{proof}
Being pseudoeffective is a closed property, so
we can assume without loss of generality 
that $c_1 (K_F) + [\omega |_F]$ is big on $F$.

{\em Step 1: Preparation, Stein Cover.}

Fix two K\"{a}hler metrics $\omega_X$, $\omega_Y$ on $X$ and $Y$ respectively.
Let $h$ be the smooth hermitian metric on $K_{X/Y}$ induced by $\omega_X$ and $\omega_Y$.
Set $\alpha := \frac{\sqrt{-1}}{2\pi}\Theta_h (K_{X/Y})$.
Thanks to Lemma \ref{approxlemma}, there exist a strictly increasing sequence 
of integers $(s_m)_{m\geq 1}$ 
and a sequence of hermitian line bundles (not necessary holomorphic)
$(F_m, D_{F_m} , h_{F_m})_{m \geq 1}$ on $X$ such that 
\begin{equation}\label{importlimitor}
\| \frac{\sqrt{-1}}{2\pi} \Theta_{h_{F_m}} (F_m) - s_m  (\alpha+\omega)\|_{C^{\infty} (X)}\rightarrow 0 .
\end{equation}

\smallskip

By our assumption on $F$ we can find a non empty Zariski open subset $Y_0$ of $Y$ such that 
$f$ is smooth over $Y_0$ and $R^i f_* \mathcal{O}_X = 0$ on $Y_0$ for every $i=1,2$.
Let $(U_i)_{i \in I}$ be a Stein cover of $Y_0$.
Therefore
\begin{equation}\label{importantprop}
H^{0,2} (f^{-1} (U_i), \R) = 0  \qquad\text{for every } i \in I. 
\end{equation}

\medskip

{\em Step 2: Construction of the approximate holomorphic line bundles.}

Let $\Theta_{h_{F_m}} ^{(0,2)} (F_m) $ be the $(0,2)$-part of $\Theta_{h_{F_m}} (F_m)$.
Thanks to \eqref{importantprop} and \eqref{importlimitor}, $\Theta_{h_{F_m}} ^{(0,2)} (F_m) $ is $\overline{\partial}$-exact on $f^{-1}(U_i)$
and 
\begin{equation}\label{importlimitordbar}
\| \Theta_{h_{F_m}} ^{(0,2)} (F_m) \|_{C^{\infty} (f^{-1}(U_i))}\rightarrow 0 .
\end{equation}

We first construct a sequence of $(0,1)$-formes $\beta_m$ on $f^{-1}(U_i)$ such that 
\begin{equation}\label{importlimitordbarsol}
\Theta_{h_{F_m}} ^{(0,2)} (F_m) =\overline{\partial} \beta_m \qquad\text{and} \qquad
\| \beta_m \|_{C^{\infty} (f^{-1}(U_i))}\rightarrow 0 .
\end{equation}
In fact, for every $y\in U_i$, as $X_y$ is compact and $H^{0,2} (X_y)=0$, we can find smooth $(0,1)$-forms $\theta_m$ on $f^{-1}(U_i)$
such that for every $y\in U_i$
\begin{equation}\label{importlimitordbarsol1}
(\Theta_{h_{F_m}} ^{(0,2)} (F_m) -\overline{\partial} \theta_m ) |_{X_y} =0 \qquad\text{and} \qquad
\| \theta_m \|_{C^{\infty} (f^{-1}(U_i))}\rightarrow 0 .
\end{equation}
Therefore $\Theta_{h_{F_m}} ^{(0,2)} (F_m) -\overline{\partial} \theta_m = \sum_j f^\star (d\overline{t}_j) \wedge \gamma_{m,j}$,
where $(d\overline{t}_j)$ is a basis of $\wedge^{0,1} (U_i)$ and $\|\gamma_{m,j}\|_{C^{\infty} (f^{-1}(U_i))}\rightarrow 0$. 
Note that $\Theta_{h_{F_m}} ^{(0,2)} (F_m) -\overline{\partial} \theta_m$ is $\overline{\partial}$-closed. 
Then $\overline{\partial} \gamma_{m,j} |_{X_y} =0$.
As $H^{0,1} (X_y)=0$, we can find $\theta' _{m,j}$ on $f^{-1} (U_j)$ such that $(\gamma_{m,j} - \overline{\partial}\theta' _{m,j} ) |_{X_y}=0$
and $\| \theta' _{m,j} \|_{C^{\infty} (f^{-1}(U_i))}\rightarrow 0$.
As a consequence, 
$$\Theta_{h_{F_m}} ^{(0,2)} (F_m) -\overline{\partial} (\theta_m + \sum_j f^\star (d \overline{t}_j) \wedge \theta' _{m,j}) =f^\star \gamma$$
for some closed $(0,2)$-form $\gamma$ on $U_i$ and 
$ \|\gamma\|_{C^{\infty} (U_i)}
\rightarrow 0$.
Together with the fact that $U_i$ is Stein, we can thus find $\beta_m$ satisfies \eqref{importlimitordbarsol}.

\medskip

Thanks to \eqref{importlimitordbarsol}, we can find holomorphic line bundles $L_{i,m}$ on $f^{-1} (U_i)$ 
equipped with smooth hermitian metrics $h_{i,m}$ such that 
\begin{equation}\label{addednew}
\| \frac{\sqrt{-1}}{2\pi} \Theta_{h_{F_m}} (F_m) - \frac{\sqrt{-1}}{2\pi} \Theta_{h_{i,m}} (L_{i,m})  \|_{C^{\infty} (f^{-1} (U_i))}\rightarrow 0 . 
\end{equation}
By construction, we have
$$\frac{\sqrt{-1}}{2\pi} \Theta_{h_{i,m}} ( L_{i,m}) - s_m \frac{\sqrt{-1}}{2\pi}  \Theta_{h} (K_{X/Y}) =\frac{\sqrt{-1}}{2\pi} \Theta_{h_{i,m}} ( L_{i,m}) -s_m \alpha
$$
$$= (\frac{\sqrt{-1}}{2\pi} \Theta_{h_{i,m}} (L_{i,m}) - \frac{\sqrt{-1}}{2\pi} \Theta_{h_{F_m}} (F_m)) + (\frac{\sqrt{-1}}{2\pi} \Theta_{h_{F_m}} (F_m) -s_m (\alpha +\omega)) + 
s_m \omega  .$$
Thanks to the estimates \eqref{importlimitor} and \eqref{addednew}, the first two terms of the right-hand side of the above equality tends to $0$.
Therefore we can find a sequence of open sets $U_{i,m} \Subset U_i$,
such that $\cup_{m\geq 1} U_{i,m} =U_i$, $U_{i,m} \Subset U_{i, m+1}$ for every $m\in \bN$,
and 
\begin{equation}\label{positivelinebundle}
\frac{\sqrt{-1}}{2\pi} \Theta_{h_{i,m}} ( L_{i,m}) - s_m \frac{\sqrt{-1}}{2\pi}  \Theta_{h} (K_{X/Y}) \geq 0\qquad\text{on }f^{-1} (U_{i,m}) .
\end{equation}

\medskip

{\em Step 3: Construction of Bergman kernel type metrics.}

Let $\varphi_{i,m}$ be the $s_m$-Bergman kernel associated to the pair (cf. Remark \ref{bergmankernelconstr})
\begin{equation}\label{pair}
(L_{i,m} = s_m K_{X/Y} + (L_{i,m} - s_m K_{X/Y}) , h_{i,m}) 
\end{equation}
i.e., $\varphi_{i,m} (x) := \sup\limits_{g\in A} \frac{1}{s_m} \ln |g|_{ h_{i,m}} (x)$, where 
\begin{equation}\label{unitnormcond}
A:=\{ g \ | \ g \in H^0 (X_{f(x)} , L_{i,m}) , \int_{X_{f (x)}} |g |_{h_{i,m}} ^{\frac{2}{s_m}} \omega_X ^{\dim X} /f^*\omega_Y ^{\dim Y} =1\}.
\end{equation}
Thanks to \eqref{positivelinebundle}, we can apply Theorem \ref{maintool} to the pair \eqref{pair} over $f^{-1} (U_{i,m})$.
In particular, we have
\begin{equation}\label{bergman}
(\alpha +\omega) + dd^c \varphi_{i,m} \geq 0 \qquad\text{on }f^{-1} (U_{i,m}) . 
\end{equation}

\medskip

We recall that $\varphi_{i,m}$ is invariant after a normalisation of $h_{i,m}$, namely, 
if we replace the metric $h_{i,m} |_{X_y}$ by $c\cdot h_{i,m} |_{X_y}$ for some constant $c >0$,
the associated Bergman kernel function $\varphi_{i,m} |_{X_y}$ is unchanged cf. Remark \ref{bergmankernelconstr} (3).

\medskip

Let $y\in  U_i$ be a generic point. Thanks to the above remark and \eqref{addednew},
we can find a constant $c_y >0$ independent of $m$, such that $c_y \leq h_{i,m} |_{X_y} \leq c_y ^{-1}$.
Therefore, by mean value inequality, $\varphi_{i,m} |_{X_y}$ is uniformly upper bounded.
Therefore we can define 
$$\varphi_i := \lim_{k\rightarrow +\infty} (\sup\limits_{m\geq k} \varphi_{i,m})^\star ,$$
where $\star$ is the u.s.c regularization. 
Thanks to \eqref{unitnormcond}, $\varphi_i$ cannot be identically $-\infty$.
Therefore $\varphi_{i}$ is a quasi-psh. As $\cup_{m\geq 1} U_{i,m} =U_i$, \eqref{bergman} implies
\begin{equation}\label{step2aim}
\alpha +\omega + dd^c \varphi_i \geq 0 \qquad\text{on }f^{-1} (U_i)  \text{ in the sense of currents.}
\end{equation}

\medskip

{\em Step 4: Final conclusion.}

We claim that

{\bf Claim 1.} $\varphi_i =\varphi_j$ on $f^{-1} (U_i \cap U_j)$ for every $i , j$.

{\bf Claim 2.} For every small Stein open set $V$ in $X$, we can find a constant $C_V$ depending only on $V$
such that 
$$\varphi_i (x)\leq C_V \qquad\text{for every } i \text{ and } x\in V \cap f^{-1} (U_i) .$$
We postpone the proof of these two claims and finish first the proof of the theorem.

Thanks to Claim 1, $(\varphi_i)_{i\in I}$ defines a global quasi-psh function $\varphi$ on $f^{-1} (Y_0)$ and \eqref{step2aim} implies that
$$\alpha+\omega +dd^c \varphi \geq 0 \qquad\text{on }f^{-1} (Y_0).$$
Thanks to Claim 2, we have $\varphi \leq C_V$ on $V \cap f^{-1} (Y_0)$. Therefore $\varphi$ can be extended as a quasi-psh function on $V$. 
Since Claim 2 is true for every small Stein open set $V$, $\varphi$ can be extended as a quasi-psh function on $X$ and satisfies
$$\alpha+\omega +dd^c \varphi \geq 0 \qquad\text{on }X.$$
As a consequence, $c_1 (K_{X/Y}) +[\omega]$ is pseudoeffective and the theorem is proved.
\end{proof}

We are left to prove the two claims in the proof of the theorem.

\begin{lemma}
The claim 1 holds, i.e., $\varphi_i =\varphi_j$ on $f^{-1} (U_i \cap U_j)$ for every $i , j$.
\end{lemma}

\begin{proof}

Let $y\in U_i \cap U_j$ be a generic point. Thanks to \eqref{addednew}, we have
\begin{equation}\label{limitcomportement}
\lim_{m\rightarrow +\infty}\|\frac{\sqrt{-1}}{2\pi} \Theta_{h_{i,m}} ( L_{i,m}) |_{X_y} - 
\frac{\sqrt{-1}}{2\pi} \Theta_{h_{j,m}} ( L_{j,m}) |_{X_y}\|_{C^{\infty} (X_y)} = 0 . 
\end{equation}
When $m$ is large enough, \eqref{limitcomportement} implies that 
$$
c_1 ( L_{i,m} |_{X_y}) =c_1 ( L_{j,m} |_{X_y}) \in H^{1,1} (X_y) \cap H^2(X_y, \Z).
$$
As $X_y$ is simply connected, $\Pic0 (X_y)=0$. Therefore 
\begin{equation}\label{newadded2}
L_{i,m}|_{X_y} =L_{j,m} |_{X_y} \qquad\text{for }m \gg 1. 
\end{equation}
Under the isomorphism of \eqref{newadded2}, by applying $\partial\overline{\partial}$-lemma, \eqref{limitcomportement} imply the existence of constants $c_m \in\R$ and smooth functions $\tau_m \in C^{\infty} (X_y)$
such that
$$h_{i,m}=h_{j,m}e^{c_m +\tau_m} \text{ on }X_y\qquad\text{and}\qquad \lim_{m\rightarrow +\infty}\|\tau_m \|_{C^{\infty} (X_y)} = 0 .$$
Combining with the construction of $\varphi_{i,m}$ and $\varphi_{j,m}$, we know that 
$$\|\varphi_{i,m} - \varphi_{j,m} \|_{C^{0} (X_y)} \leq \|\tau_m \|_{C^{0} (X_y)} \rightarrow 0 .$$
Therefore 
\begin{equation}\label{newadded}
\varphi_i |_{X_y} =\varphi_j |_{X_y} 
\end{equation}
As \eqref{newadded} is proved for every generic point $y\in U_i \cap U_j$, we have
$$\varphi_i =\varphi_j \qquad\text{on }f^{-1} (U_i \cap U_j) .$$ 
The lemma is proved.
\end{proof}

\medskip

It remains to prove the claim 2. Note that $(L_{i,m}, h_{i,m})$ is defined only on $f^{-1}(U_i)$, we can not directly apply Proposition \ref{keyextension}
to $(L_{i,m}, h_{i,m})$.The idea of the proof is as follows. 
Thanks to the construction of $F_m$ and $L_{i,m}$, by using $\partial\overline{\partial}$-lemma, we can prove that, after multiplying by a constant (which depends on $f(x) \in Y$), 
the difference between $h_{F_m} |_{X_{f(x)}}$ and 
$ h_{i,m} |_{X_{f(x)}}$ is uniformly controlled for $m\gg 1$ \footnote{The bigness of $m\gg 1$ depends on $f(x)$.}.
Therefore $(F_m |_{X_{f(x)}} , h_{F_m})$ is not far from $(L_{i,m} |_{X_{f(x)}} , h_{i,m})$.
Note that, using again \eqref{importlimitor}, $F_m |_V$ is not far from a holomorphic line bundle over $V$. 
Combining Proposition \ref{keyextension} with these two facts, we can finally prove the claim 2. 

\begin{lemma}\label{lemma2de}
The claim 2 holds, i.e., for every small Stein open set $V$ in $X$, we can find a constant $C_V$ depending only on $V$
such that 
$$\varphi_i (x)\leq C_V \qquad\text{for every } i \text{ and } x\in V \cap f^{-1} (U_i) .$$
\end{lemma}

\begin{proof}

{\em Step 1: Global approximation}.

Fix a small Stein cover $(W_j)_{j=1}^N$ of $X$. 
Without loss of generality, we can assume that $V \Subset W_1$.
Let $(F_m, D_{F_m} , h_{F_m})_{m \geq 1}$ be the hermitian line bundles (not necessary holomorphic) constructed in the step 1 of
the proof of Theorem \ref{theoremdirectimage2}.
Let $e_{F_m ,j}$ be a basis of a isometric trivialisation of $F_m$ over $W_j$ i.e., $\| e_{F_m ,j}\|_{h_{F_m}}= 1$.
Under this trivialisation, we suppose that the $(0,1)$-part of $D_{F_m}$ on $W_j$ is
$D_{F_m} '' =\overline{\partial} + \beta^{0,1} _{m,j}$, where $\beta^{0,1} _{m, j}$ is a smooth $(0,1)$-form on $W_j$.   
By Lemma \ref{approxlemma}, we can assume that
\begin{equation}\label{importlimit1}
\|\frac{1}{s_m} \beta^{0,1} _{m,j} \|_{C^{\infty} (W_j)} \leq C_1 \|\alpha+\omega\|_{C^{\infty} (X)}
\end{equation}
for a uniform constant $C_1$ independent of $m$ and $j$.

\medskip

{\em Step 2: Local estimation near $V$.}

Thanks to \eqref{importlimitor}, we know that $F_m$ is not far from a holomorphic line bundle. In this step, we would like to give a more precise 
description of this on $W_1$.

Since $W_1$ is a small Stein open set, thanks to \eqref{importlimitor}, we can find $\{\sigma_m ^{0,1} \}_{m \geq 1}$ on $W_1$ such that
$\overline{\partial} \sigma_m ^{0,1}  = - \Theta_{h_{F_m}} ^{(0,2)} (F_m)$ and $\lim\limits_{m\rightarrow +\infty} \|\sigma_m ^{0,1}\|_{C^{\infty} (W_1)} =0$.
Then we have
\begin{equation}\label{addfin}
(D_{F,m} '' +\sigma_m ^{0,1} )^2 =0 \text{ on } W_1, 
\end{equation}
and
$$
\| \frac{\sqrt{-1}}{2\pi} \Theta_{h_{F_m} , D_{F,m} '' +\sigma_m ^{0,1} } (F_m) - s_m  (\alpha+\omega)\|_{C^{\infty} (X)}\rightarrow 0 ,
$$
where $\Theta_{h_{F_m} , D_{F,m} '' +\sigma_m ^{0,1} } (F_m) $ is the curvature for the Chern connection on $F_m$ with respect to complex structure
$D_{F,m} '' +\sigma_m ^{0,1} $ and the metric $h_{F_m}$.

Note that $\frac{\sqrt{-1}}{2\pi} \Theta_{h_{F_m} , D_{F,m} '' + \sigma_m ^{0,1} } (F_m)$ is a closed $(1,1)$-form on $W_1$.
By $\partial\overline{\partial}$-lemma, we can find smooth functions $\{ \psi_m \}_{m \geq 1}$ on $W_1$ such that
\begin{enumerate}
\item[\rm (i)] $\frac{\sqrt{-1}}{2\pi}\Theta_{h_{F_m} e^{- \psi_m}, D_{F,m} '' +\sigma_m ^{0,1}} (F_m) = s_m ( \alpha +\omega)$ on $W_1$ for every $m\in\N$. 
\footnote{Here $\Theta_{h_{F_m} e^{- \psi_m}, D_{F,m} '' +\sigma_m ^{0,1}} (F_m) $ 
is the curvature for the Chern connection on $F_m$ with respect to complex structure
$D_{F,m} '' +\sigma_m ^{0,1} $ and the metric $h_{F_m} \cdot e^{- \psi_m}$.}
\item[\rm (ii)] $\lim\limits_{m\rightarrow +\infty} (\|\sigma_m ^{0,1}\|_{C^{\infty} (W_1)} +\|\psi_m\|_{C^{\infty} (W_1)}  ) =0$.
\end{enumerate}

Thanks to \eqref{addfin}, $\beta^{0,1} _{m,1} +\sigma_m ^{0,1}$ is $\overline{\partial}$-closed. 
Applying standard $L^2$-estimate, by restricting on some a little bit smaller open subset of $W_1$ (we still denote it by $W_1$ for simplicity),
there exists a smooth function $\eta_m$ on $W_1$ such that
\begin{equation}\label{holomequation}
\overline{\partial} \eta_m =\beta^{0,1} _{m,1} +\sigma_m ^{0,1} \qquad\text{on }W_1 
\end{equation}
and
$$
\frac{1}{s_m}\|\eta_m\|_{C^{\infty} (W_1)} \leq \frac{C_2}{s_m} \|\beta^{0,1} _{m,1} +\sigma_m ^{0,1}\|_{C^{\infty} (W_1)} 
$$
for a constant $C_2$ independent of $m$.
Combining this with \eqref{importlimit1} and $(ii)$, we get
\begin{equation}\label{addded1}
\overline{\lim}_{m\rightarrow +\infty}\frac{1}{s_m}\|\eta_m\|_{C^{\infty} (W_1)} \leq C_1\cdot C_2 .
\end{equation}
Moreover, by \eqref{holomequation}, $e^{-\eta_m}\cdot e_{F_m,1}$ is a holomorphic basis of $(W_1, F_m, D_{F_m} '' +\sigma_m ^{0,1})$.

\medskip

{\em Step 3: Final conclusion.}

Let $x\in V \cap f^{-1} (U_i)$ and set $y:= f (x)$. 

{\bf Claim.}  For $m$ large enough, there exists a $\widehat{g}  \in H^0 (X_y \cap W_1 , F_m, D''_{F_m} +\sigma_m ^{0,1})$.
such that
\begin{equation}\label{importantequ1}
\int_{X_y \cap W_1} |\widehat{g}|_{h_{F_m}} ^{\frac{2}{s_m}} \omega_X ^{\dim X} /\omega_Y ^{\dim Y} \leq 2  
\end{equation}
and
\begin{equation}\label{importantequ2}
\varphi_{i,m} (x) \leq  \frac{1}{s_m} \ln |\widehat{g}|_{h_{F_m}} (x) + 2 . 
\end{equation}
We postphone the proof of the claim later and first finish the proof of our lemma.

\medskip

As $e^{-\eta_m}\cdot e_{F_m,1}$ is a holomorphic basis of $(W_1, F_m, D_{F_m} '' +\sigma_m ^{0,1})$, we have
$$\widehat{g} = f \cdot e^{-\eta_m}\cdot e_{F_m,1}$$
for some holomorphic function $f$ on $W_1 \cap X_y$.
Thanks to \eqref{addded1}, we can find a uniform constant $C_3 >0$ independent of $m$ such that 
\begin{equation}\label{unicontr}
C_3 ^{-1}\leq |e^{-\eta_m}\cdot e_{F_m,1}|_{h_{F_m}} ^{\frac{2}{s_m}} \leq C_3 \qquad\text{on } W_1 .
\end{equation}
Together with \eqref{importantequ1}, we have
$$\int_{X_y \cap W_1} |f|^{\frac{2}{s_m}} \omega_X ^{\dim X} /\omega_Y ^{\dim Y} \leq 2C_3 .$$
By applying the Ohsawa-Takegoshi extension theorem \cite[Prop 0.2]{BP10}, we know that $|f|^{\frac{2}{s_m}}$ is uniformly controled.
Together with \eqref{unicontr}, $ \frac{1}{s_m} \ln |\widehat{g}|_{h_{F_m}} (x)$ is controled by a uniform constant $C_4$.
Combining this with \eqref{importantequ2}, the lemma is proved.
\end{proof}

It remains to prove the claim in Lemma \ref{lemma2de}.

\begin{proof}[Proof of the claim in Lemma \ref{lemma2de}]

By \eqref{importlimitor} and $\Pic0 (X_y)=0$,  when $m$ is large enough, we can find a smooth
$(0,1)$-forms $\tau_m ^{0,1}$ on $X_y$ such that 
\begin{equation}\label{isoimport}
\lim_{m\rightarrow +\infty}\|\tau_m ^{0,1}\|_{C^{\infty} (X_y)} =0 \qquad\text{and}\qquad  (F_m, D_{F_m} '' + \tau_m ^{0,1} )  |_{X_y} \simeq L_{i,m} |_{X_y}. 
\end{equation}
Let $\Theta_{h_{F_m}, \tau_m ^{0,1}} (F_m |_{X_y}) $ be the curvature calculated for the Chern connection with respect to 
$h_{F_m}$ and the complex structure $D_{F_m} '' + \tau_m ^{0,1}$ for the line bundle $F_m |_{X_y}$.
Thanks \eqref{importlimitor} and \eqref{isoimport} imply that
\begin{equation}\label{curdieff}
\lim_{m\rightarrow +\infty}\| \Theta_{h_{F_m},  \tau_m ^{0,1}} (F_m  |_{X_y} )- \Theta_{h_{i,m}} (L_{i,m}  |_{X_y} )\|_{C^{\infty} (X_y)} =0 .
\end{equation}
By using $\partial\overline{\partial}$-lemma over $X_y$, under the holomorphic isomorphism of \eqref{isoimport}, \eqref{curdieff} implies the existence of
a constant $c_{m, y}$ and a smooth function $\widetilde{\psi}_m$ on $X_y$ such that 
\begin{equation}\label{metriciso}
h_{F_m} \cdot e^{-\widetilde{\psi}_m} = h_{i,m} \cdot e^{-c_{m, y}} \qquad\text{on }X_y ,
\end{equation}
and 
\begin{equation}\label{weightestim}
\lim_{m\rightarrow +\infty}\|\widetilde{\psi}_m\|_{C^{\infty} (X_y)} =0   .
\end{equation}
Here $c_{m,y}$ is a constant on $X_y$ which depends only on $m$ and $y$.

\medskip

By the definition of $\varphi_{i,m}$, there exists a $g \in H^0 (X_y ,  L_{i,m})$ such that
\begin{equation}\label{newaddequation}
\varphi_{i, m} (x)=\frac{1}{s_m} \ln |g|_{h_{i,m}} (x) \qquad\text{and}\qquad \int_{X_y } |g|_{h_{i,m}} ^{\frac{2}{s_m}} \omega_X ^{\dim X} /\omega_Y ^{\dim Y} = 1 . 
\end{equation}
Using the holomorphic isomorphism \eqref{isoimport} and the metric estimations \eqref{weightestim} and \eqref{metriciso}, we can thus find
a $\widetilde{g} \in H^0 (X_y, F_m , D'' _{F_m} +\tau_m ^{0,1})$\footnote{It means that $\widetilde{g}$ is a holomorphic section of $F_m$ on $X_y$
with respect to the complex structure $D'' _{F_m} +\tau_m ^{0,1}$.} such that 
\begin{equation}\label{importantequ}
\int_{X_y} |\widetilde{g}|_{h_{F_m}} ^{\frac{2}{s_m}} \omega_X ^{\dim X} /\omega_Y ^{\dim Y} =1 \qquad\text{and}\qquad 
\varphi_{i,m} (x) \leq \frac{1}{s_m} \ln |\widetilde{g}|_{h_{F_m}} (x) + 1
\end{equation}
where $m$ is large enough. Here we use Remark \ref{bergmankernelconstr} (3) and the fact that $c_{m,y}$ is constant on $X_y$ (although it might be very large). 

\medskip

Now we prove the claim. 
Thanks to \eqref{isoimport} and the fact that $\tau_m ^{0,1} -\sigma_m ^{0,1}$ is $\overline{\partial}$-exact on the Stein open set $X_y \cap W_1$, 
there exists some smooth functions $\zeta_m$ on $X_y \cap W_1$, such that 
$$\overline{\partial} \zeta_m = \tau_m ^{0,1} -\sigma_m ^{0,1} \qquad\text{on }X_y \cap W_1$$
and
\begin{equation}\label{addded3}
\lim_{m\rightarrow +\infty}\frac{1}{s_m}\|\zeta_m\|_{C^{\infty} (X_y \cap W_1)} 
\leq \lim_{m\rightarrow +\infty}\frac{C_y}{s_m}\|\tau_m ^{0,1} -\sigma_m ^{0,1}\|_{C^{\infty} (X_y \cap W_1)} =0 .
\end{equation}
for a constant $C_y$ independent of $m$, but depending on $y$.

\medskip

Set $\widehat{g} := e^{\zeta_m} \cdot \widetilde{g} $. Then $\widehat{g}\in H^0 (X_y \cap W_1 , F_m, D''_{F_m} +\sigma_m ^{0,1})$.
Thanks to \eqref{addded3} and \eqref{importantequ}, when $m$ is large enough, we have
\begin{equation}
\int_{X_y \cap W_1} |\widehat{g}|_{h_{F_m}} ^{\frac{2}{s_m}} \omega_X ^{\dim X} /\omega_Y ^{\dim Y} \leq 2  
\end{equation}
and
\begin{equation}
\varphi_{i,m} (x) \leq  \frac{1}{s_m} \ln |\widehat{g}|_{h_{F_m}} (x) + 2 . 
\end{equation}
The claim is proved.
\end{proof}

\section{Proof of the main theorem}

We start with an easy, but important lemma relating null locus and lc centres.

\begin{lemma} \label{lemmalccentre}
Let $X$ be a compact K\"ahler manifold, and let $\alpha$ be a nef and big class such that the null locus $\Null{\alpha}$
has no divisorial components. Let $Z \subset X$ be an irreducible component of $\Null{\alpha}$.
Then there exists a positive real number $c$ such that $Z$ is a maximal lc centre for $(X, c \alpha)$.
\end{lemma}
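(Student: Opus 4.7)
The plan is to reduce the question to a computation of discrepancies along a suitably chosen log-resolution. Since $\alpha$ is big, Boucksom's theory \cite{Bou04} provides a K\"ahler current $T\in\alpha$ with analytic singularities whose singular locus is exactly the non-K\"ahler locus $E_{nK}(\alpha)$; by \cite[Thm.1.1]{CT13} this set equals $\Null{\alpha}$. The hypothesis that $\Null{\alpha}$ has no divisorial components translates into the statement that $T$ has no divisorial Lelong numbers, so $\alpha$ is in fact a modified K\"ahler class. The next step is to take $\mu:\tilde X\to X$ to be a log-resolution of the coherent ideal defining the analytic singularities of $T$, realised as a composition of blow-ups along smooth centres contained in $\Null{\alpha}$. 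Then $\mu$ is a log-resolution of $\alpha$ in the sense of Definition~\ref{definitionlogresolution}, its exceptional divisor $\sum_j E_j$ is SNC, and $\mu(\supp\sum_j E_j)=\Null{\alpha}$; in particular $Z$ appears as an irreducible component of $\mu(\supp\sum_j E_j)$, and the same $\mu$ serves as a log-resolution of every $c\alpha$ with $c>0$ (replacing $\tilde\alpha$ by $c\tilde\alpha$).

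Remark~\ref{remarkcoeffs} applied to this $\mu$ gives $\mu^*\alpha=\tilde\alpha+\sum_j r_j E_j$ with all $r_j>0$. Writing also $K_{\tilde X}=\mu^*K_X+\sum_j a_j E_j$ with $a_j\geq 0$ (since $X$ is smooth and $\mu$ is a composition of smooth blow-ups), one computes for every $c>0$
$$K_{\tilde X}+c\tilde\alpha=\mu^*(K_X+c\alpha)+\sum_j(a_j-c\,r_j)\,E_j,$$
so the discrepancy of $E_j$ for $(X,c\alpha)$ is $d_j^{(c)}=a_j-c\,r_j$. This is a strictly decreasing affine function of $c$, non-negative at $c=0$, and equal to $-1$ precisely at $c=c_j:=(a_j+1)/r_j>0$.

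Finally, set $J_Z:=\{j:\mu(E_j)=Z\}$, which is non-empty since $Z$ is an irreducible component of $\mu(\supp\sum_j E_j)$, and define
$$c:=\min_{j\in J_Z}\frac{a_j+1}{r_j}>0.$$
By construction $d_j^{(c)}\geq-1$ for every $j\in J_Z$, with equality for at least one index $j_0$ satisfying $\mu(E_{j_0})=Z$, which exhibits $Z$ as a maximal lc centre of $(X,c\alpha)$ in the sense of Definition~\ref{definitionmaxlc}. The heart of the argument lies in the first paragraph: one needs a single log-resolution whose exceptional divisor surjects onto each irreducible component of $\Null{\alpha}$ and whose coefficients $r_j$ are strictly positive along every divisor over $Z$. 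This is precisely what Boucksom's modified K\"ahler theory combined with the Collins--Tosatti identification $E_{nK}(\alpha)=\Null{\alpha}$ supplies, and this is also where the assumption on the absence of divisorial components of $\Null{\alpha}$ enters.
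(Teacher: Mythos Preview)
Your proof is correct and follows essentially the same route as the paper's: identify $E_{nK}(\alpha)=\Null{\alpha}$ via Collins--Tosatti, use Boucksom's K\"ahler current with analytic singularities to see that $\alpha$ is modified K\"ahler, pass to a log-resolution whose exceptional locus maps onto $\Null{\alpha}$, invoke Remark~\ref{remarkcoeffs} for $r_j>0$, and then choose $c=\min_{j\in J_Z}(a_j+1)/r_j$. The only difference is that the paper inserts an auxiliary rescaling $\mu^*\alpha=\frac{\tilde\alpha+m\mu^*\alpha}{m+1}+\sum_j\frac{r_j}{m+1}E_j$ to arrange $r_j<1$ before choosing $c$; this step is not needed for the conclusion and your direct computation of $c$ is cleaner.
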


\begin{remark*}
The coefficient $c$ depends on the choice of $Z$, so in general the other irreducible components of $\Null{\alpha}$
will not be lc centres for $(X, c \alpha)$.
\end{remark*}

\begin{proof}
By a theorem of Collins of Tosatti \cite[Thm.1.1]{CT13} the non-K\"ahler locus $E_{nK}(\alpha)$ coincides
with the null-locus of $\Null{\alpha}$. Moreover by \cite[Thm.3.17]{Bou04} there exists a K\"ahler current $T$
with analytic singularities in the class $\alpha$ such that the Lelong set coincides with $E_{nK}(\alpha)$.
Since the non-K\"ahler locus has no divisorial components the class $\alpha$ is a modified K\"ahler class \cite[Defn.2.2]{Bou04}.
By \cite[Prop.2.3]{Bou04} the class $\alpha$ has a log-resolution $\holom{\mu}{\tilde X}{X}$ such that $\mu_* \tilde \alpha = \alpha$. 
In fact the proof proceeds by desingularising a K\"ahler current with
analytic singularities in the class $\alpha$, so, using the current $T$ defined above, we see that the $\mu$-exceptional
locus maps exactly onto $\Null{\alpha}$. Up to blowing up further the exceptional locus is a SNC divisor.
By Remark \ref{remarkcoeffs} we have 
$$
\mu^* \alpha = \tilde \alpha + \sum_{j=1}^k r_j D_j.
$$
with $r_j>0$ for all $j \in \{1, \ldots, k\}$. Since $\alpha$ is nef and big, the class $\tilde \alpha + m \mu^* \alpha$
is K\"ahler for all $m >0$. Thus up to replacing the decomposition above by 
$$
\mu^* \alpha = \frac{\tilde \alpha+m \mu^* \alpha}{m+1} + \sum_{j=1}^k \frac{r_j}{m+1} D_j
$$ 
for $m \gg 0$ we can suppose that $r_j<1$ for all $j \in \{1, \ldots, k\}$.
Since $X$ is smooth we have
$K_{\tilde X} = \mu^* K_X +  \sum_{j=1}^k a_j E_j$
with $a_j$ a positive integer. Since $r_j<1$ we have $a_j-r_j>-1$ for all $E_j$ mapping onto $Z$.
Thus we can choose a $c \in \R^+$ such that $a_j- c r_j \geq -1$ for all $E_j$ mapping onto $Z$
and equality holds for at least one divisor.
\end{proof}

As a first step toward Theorem \ref{theoremmain} we can now prove the following:

\begin{theorem} \label{theoremuniruled}
Let $X$ be a compact K\"ahler manifold of dimension $n$. 
Suppose that Conjecture \ref{conjectureBDPP} holds 
for all manifolds of dimension at most $n-1$. 
Suppose that $K_X$ is pseudoeffective but not nef, and let $\omega$ be a K\"ahler class on $X$ such that $\alpha:=K_X+\omega$
is nef and big but not K\"ahler.

Let $Z \subset X$ be an irreducible component of maximal dimension
of the null-locus $\Null{\alpha}$, and let $\holom{\pi}{Z'}{Z}$ be the composition of the normalisation and a resolution
of singularities. Let $k$ be the numerical dimension of $\pi^* \alpha|_Z$ (cf. Definition \ref{definitionnumericaldimension}). Then we have
$$
K_{Z'} \cdot \pi^* \alpha|_Z^{k} \cdot \pi^* \omega|_Z^{\dim Z-k-1}  < 0.
$$
In particular $Z'$ is uniruled.
\end{theorem}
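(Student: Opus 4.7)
The plan follows the outline in Section 1.B. The core idea is to realize $Z$ as a maximal lc centre of the pair $(X, c\alpha)$ for some constant $c>0$, apply the weak subadjunction formula of Theorem \ref{theoremweaksubadjunction}, and then exploit both the relation $K_X = \alpha - \omega$ and the vanishing $\pi^*\alpha|_Z^{k+1} = 0$ coming from the definition of numerical dimension.

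First I would produce the lc centre structure. By \cite[Thm.1.1]{CT13} the null-locus $\Null{\alpha}$ is a proper analytic subset of $X$. After reducing to the case where $\Null{\alpha}$ has no divisorial component (via a perturbation of $\omega$, or else handling a divisorial $Z$ by the ordinary adjunction $K_Z=(K_X+Z)|_Z$), Lemma \ref{lemmalccentre} supplies a positive real number $c$ such that $Z$ is a maximal lc centre of the modified K\"ahler pair $(X, c\alpha)$. Applying Theorem \ref{theoremweaksubadjunction} with test classes on $\tilde Z$ consisting of $k$ copies of $\nu^*\alpha|_Z$ and $\dim Z-k-1$ copies of $\nu^*\omega|_Z$ (both nef on $\tilde Z$ since $\nu$ is finite), and translating via Definition \ref{definitionintersection} to $Z'$, yields
$$K_{Z'} \cdot \pi^*\alpha|_Z^{k} \cdot \pi^*\omega|_Z^{d-k-1} \;\le\; \pi^*(K_X+c\alpha)|_Z \cdot \pi^*\alpha|_Z^{k} \cdot \pi^*\omega|_Z^{d-k-1},$$
where $d := \dim Z$.

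On the right-hand side I would substitute $K_X = \alpha - \omega$ and expand. The $c\alpha$-term carries the factor $\pi^*\alpha|_Z^{k+1}$, which vanishes since $\nd(\pi^*\alpha|_Z)=k$; the $(\alpha-\omega)$-term collapses to $-\pi^*\alpha|_Z^{k}\cdot\pi^*\omega|_Z^{d-k}$. Thus the whole inequality reduces to establishing the strict positivity
$$\pi^*\alpha|_Z^{k}\cdot\pi^*\omega|_Z^{d-k} \;>\; 0.$$
Once this is in place, the conclusion is immediate: if $K_{Z'}$ were pseudoeffective its intersection with the nef product $\pi^*\alpha|_Z^{k}\cdot\pi^*\omega|_Z^{d-k-1}$ would be non-negative, so the strict negative value above forces $K_{Z'}$ to be non-pseudoeffective, and the hypothesis of Conjecture \ref{conjectureBDPP} in dimension $\dim Z'\le n-1$ gives uniruledness of $Z'$.

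The step I expect to be the main obstacle is the final positivity $\pi^*\alpha|_Z^{k}\cdot\pi^*\omega|_Z^{d-k}>0$. The class $\pi^*\omega|_Z$ is nef, and it is big since $\int_{Z'}(\pi^*\omega|_Z)^d = \int_Z\omega|_Z^d>0$; however it is only the pull-back of a K\"ahler class, so it is not itself K\"ahler on $Z'$ and Remark \ref{remarknumericaldimension} cannot be applied verbatim. A naive perturbation $\pi^*\omega|_Z + \varepsilon\,\omega_{Z'}$ by an auxiliary K\"ahler class $\omega_{Z'}$ only delivers the non-strict inequality in the limit $\varepsilon\to 0$. I would instead exploit that, by Demailly--P\u aun, a nef and big class contains a K\"ahler current, then use the resulting decomposition to peel off a K\"ahler direction while keeping the residual factor pseudoeffective, and apply Remark \ref{remarknumericaldimension} for $\pi^*\alpha|_Z$ against the genuine K\"ahler class $\omega_{Z'}$ to secure strict positivity.
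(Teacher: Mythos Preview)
Your strategy for the non-divisorial case is exactly the paper's Step 2: invoke Lemma~\ref{lemmalccentre} to make $Z$ a maximal lc centre of $(X,c\alpha)$, feed the nef classes $\pi^*\alpha|_Z$ and $\pi^*\omega|_Z$ into Theorem~\ref{theoremweaksubadjunction}, and then kill the $c\alpha$-term via $\pi^*\alpha|_Z^{k+1}=0$. Your treatment of the strict positivity $\pi^*\alpha|_Z^{k}\cdot\pi^*\omega|_Z^{d-k}>0$ is in fact more careful than the paper's one-line appeal to Remark~\ref{remarknumericaldimension}: you are right that $\pi^*\omega|_Z$ is only nef and big on $Z'$, and your fix (write $\pi^*\omega|_Z=\varepsilon\omega_{Z'}+P$ with $P$ pseudoeffective, peel off one factor at a time using $\mathrm{nef}^{d-1}\cdot\mathrm{pseff}\geq 0$, and bound below by $\varepsilon^{d-k}\pi^*\alpha|_Z^{k}\cdot\omega_{Z'}^{d-k}>0$) is correct.

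The genuine gap is the divisorial case. Neither of your parenthetical suggestions works. Perturbing $\omega$ changes $\alpha$ and hence the null-locus, but the theorem fixes $Z$ as a component of $\Null{\alpha}$ for the given $\omega$; moreover if $Z$ itself is a divisor there is no perturbation that keeps $Z$ in the null-locus while removing divisorial components. Ordinary adjunction gives (after accounting for the conductor on the resolution) $K_{Z'}\cdot(\text{nef})\leq \pi^*(K_X+Z)|_Z\cdot(\text{nef})$, but now you must control the extra term $\pi^*Z|_Z\cdot\pi^*\alpha|_Z^{k}\cdot\pi^*\omega|_Z^{d-k-1}$, and there is no a priori reason this is $\leq 0$. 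The paper's Step~1 handles this by bringing in a genuinely new ingredient: since $K_X$ is pseudoeffective one has the divisorial Zariski decomposition $c_1(K_X)=\sum e_iZ_i+P(K_X)$ with $P(K_X)$ modified nef, and the negativity $\pi^*K_X|_Z\cdot\pi^*\alpha|_Z^{k}\cdot\pi^*\omega|_Z^{d-k-1}<0$ then forces (arguing as in \cite[Lemma~4.1]{HP13a}) that $Z$ occurs among the $Z_i$ and that $\pi^*Z|_Z\cdot\pi^*\alpha|_Z^{k}\cdot\pi^*\omega|_Z^{d-k-1}<0$. This is precisely where the hypothesis that $Z$ has \emph{maximal} dimension in $\Null{\alpha}$ is used (cf.\ the remark following the paper's proof), and it cannot be bypassed by perturbation or by naive adjunction alone.
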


\begin{proof}[Proof of Theorem \ref{theoremuniruled}]
Since $\alpha=K_X+\omega$ and $\pi^* \alpha|_Z^{k+1}=0$ we have
$$
\pi^* K_X|_Z \cdot \pi^* \alpha|_Z^{k} = - \pi^* \omega|_Z \cdot \pi^* \alpha|_Z^{k}.
$$
By hypothesis $k < \dim Z$ so $\dim Z-k-1$ is non-negative.
Since $\pi^* \alpha|_Z^{k}$ is a non-zero nef class and $\omega$ is K\"ahler this implies by Remark \ref{remarknumericaldimension}
that
\begin{equation} \label{equationKXnegative}
\pi^* K_X|_Z \cdot \pi^* \alpha|_Z^{k} \cdot \pi^* \omega|_Z^{\dim Z-k-1} 
= - \pi^* \omega|_Z^{\dim Z-k} \cdot \pi^* \alpha|_Z^{k} < 0.
\end{equation}
Our goal will be to prove that
$$
K_{Z'} \cdot \pi^* \alpha|_Z^{k} \cdot \pi^* \omega|_Z^{\dim Z-k-1}  < 0.
$$
This inequality implies the statement: since $K_{Z'}$ is not pseudoeffective
and Conjecture \ref{conjectureBDPP} holds in dimension at most $n-1 \geq \dim Z'$ we obtain that $Z'$ is uniruled.

We will make a case distinction:

{\em Step 1. The null-locus of $\alpha$ contains an irreducible divisor.}
Since $Z$ has maximal dimension, it is a divisor.
Since $K_X$ is pseudoeffective we can consider the divisorial Zariski decomposition \cite[Defn.3.7]{Bou04}
$$
c_1(K_X) = \sum e_i Z_i + P(K_X),
$$
where $e_i \geq 0$, the $Z_i \subset X$ are prime divisors and $P(K_X)$ is a modified nef class \cite[Defn.2.2]{Bou04}. 
Arguing as in \cite[Lemma 4.1]{HP16} we see that the inequality \eqref{equationKXnegative} implies (up to renumbering) that
$Z_1=Z$ and 
\begin{equation} \label{equationZnegative}
\pi^* (c_1(\sO_Z(Z))) \cdot  \pi^* \alpha|_Z^{k} \cdot \pi^* \omega|_Z^{n-k-2} < 0. 
\end{equation}
Thus the normal bundle $N_{Z/X} \simeq \sO_Z(Z)$ is negative with respect to these nef classes. 
Moreover there exist effective $\Q$-divisors on $D_1$ and $D_2$ on $Z'$ such that
$$
K_{Z'} = \pi^* (K_X+Z) + D_1 - D_2
$$
and $\pi(D_1)$ has codimension at least two in $Z$ (cf. \cite[Prop.2.3]{Rei94}). Thus we have
$$
K_{Z'} \cdot \pi^* \alpha|_Z^{k} \cdot \pi^* \omega|_Z^{n-k-2} 
\leq \pi^* (K_X+Z) \cdot \pi^* \alpha|_Z^{k} \cdot \pi^* \omega|_Z^{n-k-2}. 
$$
Combining  \eqref{equationKXnegative} and  \eqref{equationZnegative} we obtain that the right hand side is negative.

{\em Step 2. The null-locus of $\alpha$ has no divisorial components.}
In this case we know by Lemma \ref{lemmalccentre} that there exists a $c>0$ such that $Z$ is a maximal lc centre
for $(X, c \alpha)$. The classes $\pi^* \alpha|_Z$ and $\pi^* \omega|_Z$ are nef, so by Theorem \ref{theoremweaksubadjunction} 
we have
$$
K_{Z'} \cdot \pi^* \alpha|_Z^{k} \cdot \pi^* \omega|_Z^{\dim Z-k-1} 
\leq 
\pi^* (K_X+ c \alpha)|_Z \cdot \pi^* \alpha|_Z^{k} \cdot \pi^* \omega|_Z^{\dim Z-k-1}.
$$
Since $k$ is the numerical dimension of $\pi^* \alpha|_Z$ we have $c \ \pi^* \alpha|_Z^{k+1} \cdot \pi^* \omega|_Z^{\dim Z-k-1}=0$.
Thus \eqref{equationKXnegative} yields the claim.
\end{proof}

\begin{remark}
We used the hypothesis that $Z$ has maximal dimension only in Step 1, so our proof actually yields a more precise statement:
$\Null{\alpha}$ contains a uniruled divisor or all the components of $\Null{\alpha}$ are uniruled.
\end{remark}

We come now to the technical problem mentioned in the introduction:

\begin{problem} \label{problembound}
Let $X$ be a compact K\"ahler manifold, and let $\alpha \in N^1(X)$ be a nef cohomology class. Does there exist 
a real number $b>0$ such that for every (rational) curve $C \subset X$ we have either $\alpha \cdot C = 0$ or
$\alpha \cdot C \geq b$ ?
\end{problem}

\begin{remark} \label{remarkbound}
If $\alpha$ is the class of a nef $\Q$-divisor, the answer is obviously yes: some positive multiple $m \alpha$ is integral, 
so we can choose $b:= \frac{1}{m}$. If $\alpha$ is a K\"ahler class the answer is also yes: by Bishop's theorem there
are only finitely many deformation families of curves $C$ such that $\alpha \cdot C \leq 1$, so $\alpha \cdot C$ takes 
only finitely many values in $]0, 1[$. However, even for the class of an $\R$-divisor on a projective manifold $X$ it seems
possible that the values $\alpha \cdot C$ accumulate at $0$ \cite[Rem.1.3.12]{Laz04a}. In the proof of Theorem \ref{theoremmain}
we will use that $\alpha$ is an adjoint class to obtain the existence of the lower bound $b$.
\end{remark}

The problem \ref{problembound} is invariant under certain birational morphisms:

\begin{lemma} \label{lemmainvariance}
Let \holom{\pi}{X}{X'} be a holomorphic map between normal projective varieties $X$ and $X'$.
Let $\alpha'$ be a nef $\R$-divisor class on $X'$ and set $\alpha:=\pi^* \alpha'$.

a) Suppose that there exists a real number $b>0$ such that for every (rational) curve $C' \subset X'$ 
we have $\alpha' \cdot C' = 0$ or $\alpha' \cdot C' \geq b$.  
Then for every (rational) curve $C \subset X$ we have $\alpha \cdot C = 0$ or $\alpha \cdot C \geq b$.

b) Suppose that there exists a real number $b>0$ such that for every (rational) curve $C \subset X$ 
we have $\alpha \cdot C = 0$ or $\alpha \cdot C \geq b$. Suppose also that $X$ has klt singularities and $\pi$
is the contraction of a $K_X$-negative extremal ray.
Then for every (rational) curve $C' \subset X'$ we have $\alpha' \cdot C' = 0$ or $\alpha' \cdot C' \geq b$.
\end{lemma}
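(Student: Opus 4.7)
Part (a) reduces immediately to the projection formula: for an irreducible curve $C \subset X$ with image $\pi(C)$, either $\pi$ contracts $C$ (so that $\alpha \cdot C = \pi^{*}\alpha' \cdot C = 0$), or $\pi(C) = C'$ is an irreducible curve with $\pi_{*}[C] = d \cdot [C']$, where $d := \deg(\pi|_C) \in \Z_{\geq 1}$. In the second case the projection formula gives $\alpha \cdot C = d\,(\alpha' \cdot C')$, so the hypothesis on $X'$ forces either $\alpha \cdot C = 0$ or $\alpha \cdot C \geq d b \geq b$. Since the image of a rational curve under $\pi$ is a point or a rational curve, the rational-curve variant of the hypothesis on $X'$ transfers unchanged.

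My plan for Part (b) is to reduce it to the projection formula in the reverse direction, by producing, for every curve $C' \subset X'$, a \emph{birational} lift $\tilde{C} \subset X$ --- an irreducible curve with $\pi_{*}[\tilde{C}] = [C']$, equivalently $\deg(\pi|_{\tilde{C}}) = 1$. Given such a $\tilde{C}$, the projection formula yields $\alpha \cdot \tilde{C} = \alpha' \cdot C'$, and the hypothesis on $X$ transfers the bound directly: if $\alpha' \cdot C' > 0$ then $\alpha \cdot \tilde{C} > 0$, hence $\alpha \cdot \tilde{C} \geq b$. To construct the lift I will split on the position of $C'$. If $C' \not\subset \pi(\Exc(\pi))$, then $\pi$ is an isomorphism over the generic point of $C'$ and the strict transform of $C'$ in $X$ is a birational lift (rational if $C'$ is). If instead $C' \subset \pi(\Exc(\pi))$, I will exploit that for a $K_X$-negative extremal contraction on a klt variety the general fibers of $\pi$ are rationally connected Fano-type varieties, so the restricted fibration $\fibre{\pi}{C'} \to C'$ still has rationally connected general fiber; applying Graber--Harris--Starr over the normalization ${C'}^{\nu} \to C'$ then supplies a section, which produces an irreducible curve in $X$ mapping birationally onto $C'$. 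When $C'$ is itself rational, I can moreover choose this lift to be a rational curve, again using the rational connectedness of the fibers.

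The main obstacle is precisely the case $C' \subset \pi(\Exc(\pi))$ of Part (b): here the klt assumption and the $K_X$-negativity of the contracted ray must genuinely be used, through the structure theorem that identifies the general fibers of such contractions as rationally connected klt Fano-type varieties. The ``generic'' case (strict transform) and all of Part (a) are, by contrast, routine consequences of the projection formula for $1$-cycles and of the equality $\pi_{*}[C] = \deg(\pi|_C) \cdot [\pi(C)]$ for non-contracted curves.
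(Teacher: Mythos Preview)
Your proof is correct and follows essentially the same strategy as the paper. For part (a) both arguments are identical projection-formula computations. For part (b) the paper does not split into cases: it invokes \cite[Cor.~1.7(2)]{HM07} directly, which asserts that for a $K_X$-negative extremal contraction from a klt variety the map $\fibre{\pi}{C'}\to C'$ admits a section for \emph{every} curve $C'\subset X'$, and then concludes exactly as you do via the projection formula. Your two-case argument (strict transform when $C'\not\subset\pi(\Exc\pi)$; rationally connected fibers plus Graber--Harris--Starr over $C'^{\nu}$ otherwise) is precisely the mechanism behind that corollary of Hacon--McKernan, so you are unpacking the black box rather than citing it. The case split is harmless but unnecessary, and your remark that rationality of the lift when $C'$ is rational ``again uses the rational connectedness of the fibers'' is slightly misleading: once GHS produces a section over $C'^{\nu}\cong\PP^1$, that section is automatically a rational curve mapping birationally to $C'$, with no further input needed.
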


\begin{proof} {\em Proof of a)}
Let $C \subset X$ be a (rational) curve such that $\alpha \cdot C \neq 0$. the image $C':=\pi(C) \subset X'$ is a (rational) curve
and the induced map $C \rightarrow C'$ has degree $d \geq 1$. Thus the projection formula yields
$$
\alpha \cdot C = \pi^* \alpha' \cdot C = \alpha' \cdot \pi_*(C) = d \alpha' \cdot C' \geq d b \geq b.
$$

{\em Proof of b)} Let $C' \subset X'$ be an arbitrary (rational) curve such that $\alpha' \cdot C' \neq 0$.
By \cite[Cor.1.7(2)]{HM07} the natural map $\fibre{\pi}{C'} \rightarrow C'$ has a section, so there exists a (rational) curve
$C \subset X$ such that the map $\pi|_{C}: C \rightarrow C'$ has degree one.  
Thus the projection formula yields
$$
\alpha' \cdot C' = \alpha' \cdot \pi_*(C) = \pi^* \alpha \cdot C \geq b.
$$
\end{proof}

\begin{remark} \label{remarkkaehlercase}
It is easy to see that statement a) also holds when $X$ and $X'$ are compact K\"ahler manifolds and 
$\alpha'$ is a nef cohomology class on $X'$.
\end{remark}

\begin{corollary} \label{corollaryinvariance}
Let $X$ be a normal projective $\Q$-factorial variety with klt singularities, and let $\alpha$ be a nef $\R$-divisor class on $X$.
Suppose that there exists a real number $b>0$ such that for every (rational) curve $C \subset X$ we have $\alpha \cdot C = 0$ 
or $\alpha \cdot C \geq b$.  Let $\merom{\mu}{X}{X'}$ be the divisorial contraction or flip of a $K_X$-negative 
extremal ray $\Gamma$ such that $\alpha \cdot \Gamma=0$. Set $\alpha':=\mu_*(\alpha)$. Then $\alpha'$ is
a nef  $\R$-divisor class on $X'$ and 
for every (rational) curve $C \subset X$ we have $\alpha \cdot C = 0$ or $\alpha \cdot C \geq b$.  
\end{corollary}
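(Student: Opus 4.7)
The plan is to split into the divisorial contraction and flip cases; in each case $\alpha$ descends across the extremal contraction to a nef class on the target, and Lemma \ref{lemmainvariance} transports the lower bound to $X'$.

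In the divisorial case $\mu\colon X\to X'$ is the morphism contracting $\Gamma$, so $X'$ is normal $\Q$-factorial with klt singularities and $\rho(X/X')=1$. Since $\alpha\cdot\Gamma=0$, the standard contraction theorem produces a nef $\R$-Cartier class $\alpha'$ on $X'$ with $\mu^*\alpha'=\alpha$, which coincides with $\mu_*\alpha$. Lemma \ref{lemmainvariance}b) applied to $\mu$ then immediately yields the desired lower bound on curves in $X'$.

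In the flipping case, I would consider the standard diagram
$$
\xymatrix{X \ar[dr]_{\nu} \ar@{-->}[rr]^{\mu} & & X' \ar[dl]^{\nu^+} \\ & Y &}
$$
where $\nu$ is the small contraction of $\Gamma$ and $\nu^+$ is the flip, both isomorphisms in codimension one. Assuming the descent $\alpha=\nu^*\alpha_Y$ for some $\R$-Cartier class $\alpha_Y$ on $Y$, the class $\alpha_Y$ is automatically nef: by \cite[Cor.1.7(2)]{HM07} every curve $D\subset Y$ lifts to some $C\subset X$ with $\nu|_C$ of degree one, whence $\alpha_Y\cdot D=\alpha\cdot C\ge 0$. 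Put $\alpha':=(\nu^+)^*\alpha_Y$; this equals $\mu_*\alpha$ since $\nu$ and $\nu^+$ agree in codimension one, and it is nef as a pullback of a nef class. Lemma \ref{lemmainvariance}b) applied to the extremal contraction $\nu$ gives the lower bound on curves in $Y$, and Lemma \ref{lemmainvariance}a) applied to $\nu^+\colon X'\to Y$ transports this bound to every curve in $X'$.

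The one delicate step is the descent $\alpha=\nu^*\alpha_Y$, which is not a direct consequence of the Cartier contraction theorem since $Y$ is generally not $\Q$-factorial. To handle it, I would write $\alpha=\sum a_i\bar D_i$ with $D_i$ Cartier and fix a Cartier divisor $H$ on $X$ with $H\cdot\Gamma\neq 0$. The numbers $c_i:=(D_i\cdot\Gamma)/(H\cdot\Gamma)$ are rational, so each $D_i':=D_i-c_iH$ is $\Q$-Cartier with $D_i'\cdot\Gamma=0$; and $\alpha\cdot\Gamma=0$ forces $\sum a_ic_i=0$, so $\alpha=\sum a_i\bar D_i'$. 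Applying the Cartier contraction theorem to a suitable multiple of each $D_i'$ gives $\Q$-Cartier descent summand by summand, which assembles into the required $\R$-Cartier descent of $\alpha$. All remaining steps are routine projection-formula manipulations, and the main conceptual obstacle really is this $\R$-Cartier descent across a small contraction whose target fails to be $\Q$-factorial.
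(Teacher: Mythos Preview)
Your proof is correct and follows essentially the same route as the paper: split into the divisorial and flip cases, descend $\alpha$ through the extremal contraction, and apply Lemma \ref{lemmainvariance}(b) (and then (a) in the flip case). The only difference is that where the paper simply cites \cite[Cor.~3.17]{KM98} for the $\R$-Cartier descent $\alpha=\nu^*\alpha_Y$, you spell out the standard reduction to the $\Q$-Cartier case; this is fine but not strictly necessary.
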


\begin{proof}
If $\mu$ is divisorial the condition $\alpha \cdot \Gamma =0$ implies that $\alpha = \mu^* \alpha'$ \cite[Cor.3.17]{KM98}. 
Thus Lemma \ref{lemmainvariance}, b) applies. 
If $\mu$ is a flip, let $\holom{f}{X}{Y}$ be the contraction of the extremal ray and $\holom{f'}{X'}{Y}$ the flipping map.
Since $\alpha \cdot \Gamma=0$ there exists an $\R$-divisor class $\alpha_Y$ on $Y$ 
such that $\alpha = f^* \alpha_Y$ \cite[Cor.3.17]{KM98}.
Moreover we have $\alpha'=(f')^* \alpha_Y$ since they coincide in the complement of the flipped locus.
Thus we conclude by applying Lemma \ref{lemmainvariance},b) to $f$ and
Lemma \ref{lemmainvariance},a) to $f'$.
\end{proof}

\begin{proposition} \label{propositionboundedpseff}
Let $F$ be a projective manifold, and let $\alpha$ be a nef $\R$-divisor class on $F$. Suppose that there exists
a real number $b>0$ such that for every rational curve $C \subset F$ such that $\alpha \cdot C \neq 0$ we have
\begin{equation} \label{conditionbound}
\alpha \cdot C>b.
\end{equation}
Then one of the following holds
\begin{itemize}
\item $F$ is dominated by rational curves $C \subset F$ such that $\alpha \cdot C=0$; or
\item the class $K_F+ \frac{2 \dim F}{b} \alpha$ is pseudoeffective.
\end{itemize}
\end{proposition}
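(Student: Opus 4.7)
The plan is to proceed by contradiction: set $c := \frac{2 \dim F}{b}$ and assume $K_F + c\alpha$ is not pseudoeffective; the goal is then to produce a covering family of $\alpha$-trivial rational curves on $F$.

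First I would locate a suitable extremal ray on $F$. Since $K_F + c\alpha$ fails to be pseudoeffective while $c\alpha$ is nef, there is some curve on which $K_F + c\alpha$ is negative, and on any such curve $K_F$ is also negative. The cone theorem then yields a $K_F$-negative extremal ray $R$ that is simultaneously $K_F + c\alpha$-negative. By Kawamata's bound on the length of extremal rays \cite[Thm.1]{Kaw91}, $R$ is generated by a rational curve $C_0$ with $-K_F \cdot C_0 \leq 2 \dim F$. Combined with $(K_F + c\alpha) \cdot C_0 < 0$ this forces $\alpha \cdot C_0 < \frac{2\dim F}{c} = b$, so the hypothesis \eqref{conditionbound} rules out strict positivity and gives $\alpha \cdot C_0 = 0$; the ray $R$ is $\alpha$-trivial.

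Next I would iterate, running a $K_F$-MMP in which each contracted or flipped ray is of the $\alpha$-trivial type just produced. At stage $i$ the strict transform $\alpha_i$ of $\alpha$ on $F_i$ remains a nef $\R$-divisor class, the class $K_{F_i} + c\alpha_i$ remains non-pseudoeffective (pseudoeffectivity would pull back through the birational map $F \dashrightarrow F_i$), and by Corollary \ref{corollaryinvariance} the bound that every nonzero value $\alpha_i \cdot C$ on a rational curve exceeds $b$ is preserved. Hence the argument of the previous paragraph applies to $F_i$ and produces the next $\alpha_i$-trivial extremal ray. Since $F$ is smooth projective, termination of this MMP follows from the usual BCHM machinery \cite{BCHM10}, and we end with a Mori fibre space $\pi : F_N \to Y$.

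To finish, the positive-dimensional fibres of $\pi$ are covered by rational curves of the contracted ray, hence by $\alpha_N$-trivial ones, so $F_N$ is dominated by such curves. Choosing a common resolution $p : W \to F$, $q : W \to F_N$ of the induced birational map, the negativity lemma applied to each $\alpha$-trivial step of the MMP shows that $p^* \alpha$ and $q^* \alpha_N$ differ by a divisor supported on the exceptional locus; a general fibre rational curve of $\pi$ avoids this locus, lifts to $W$, and projects under $p$ to a rational curve through a general point of $F$ with zero $\alpha$-intersection (projection formula). This produces the desired domination of $F$ and contradicts our standing assumption. The main obstacle in this plan is the bookkeeping across the MMP: one must verify both that the numerical bound $b$ (via Corollary \ref{corollaryinvariance}) and the failure of pseudoeffectivity of $K + c\alpha$ persist through every divisorial contraction and every flip, so that Kawamata's length-of-ray argument may legitimately be re-run at each stage; once that is secured, the final lifting step is essentially routine.
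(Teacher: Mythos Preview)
Your overall strategy matches the paper's: use Kawamata's length bound to force each extremal ray to be $\alpha$-trivial, run an MMP, and finish via a Mori fibre space. The issue is your termination claim. You write that ``termination of this MMP follows from the usual BCHM machinery,'' but \cite{BCHM10} does not prove termination of an arbitrary $K_F$-MMP; it proves termination of an MMP \emph{with scaling of an ample divisor}, or of a $(K_F+\Delta)$-MMP with $\Delta$ a big klt boundary. Your MMP is neither: at each step you are selecting a specific ($\alpha$-trivial) ray, and there is no reason this sequence of rays should agree with the sequence produced by scaling. So as written you cannot invoke BCHM to reach a Mori fibre space, and without termination the argument does not close.

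The paper repairs exactly this point by reversing the logic. Since $K_F+c\alpha$ is not pseudoeffective, one chooses a small ample $H$ so that $K_F+c\alpha+H$ is still not pseudoeffective, and picks effective $\R$-divisors $\Delta\sim_{\R} c\alpha+H$ and $\Delta_H\sim_{\R} H$ with $(F,\Delta)$ and $(F,\Delta_H)$ klt. Now $\Delta$ is big, so \cite[Cor.~1.3.3]{BCHM10} guarantees that the $(K_F+\Delta)$-MMP terminates in a Mori fibre space. One then checks \emph{a posteriori} that every ray $\Gamma_i$ contracted in this MMP is $\alpha_i$-trivial: since $\alpha_i$ is nef the ray is also $(K_{F_i}+\Delta_{H,i})$-negative, Kawamata's bound applied to the klt pair $(F_i,\Delta_{H,i})$ gives a rational curve $C_i\in\Gamma_i$ with $-(K_{F_i}+\Delta_{H,i})\cdot C_i\leq 2\dim F$, and combining with $(K_{F_i}+\Delta_i)\cdot C_i<0$ forces $\alpha_i\cdot C_i<b$, hence $\alpha_i\cdot C_i=0$. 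Corollary~\ref{corollaryinvariance} then propagates nefness of $\alpha$ and the bound \eqref{conditionbound} to the next step, and one also gets that $(F_{i+1},\Delta_{H,i+1})$ stays klt since $\Gamma_i$ is $(K_{F_i}+\Delta_{H,i})$-negative. The lifting of the covering rational curves back to $F$ then goes exactly as you describe. So the fix is not to choose $\alpha$-trivial rays and hope for termination, but to run an MMP for which termination is guaranteed and then argue that its rays are forced to be $\alpha$-trivial; the auxiliary boundary $\Delta_H$ is what makes Kawamata's bound applicable at every stage.
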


\begin{proof}
Note that, up to replacing $\alpha$ by $\frac{2 \dim F}{b} \alpha$, we can suppose that 
\begin{equation} \label{conditionbound2}
\alpha \cdot C>2 \dim F
\end{equation}
for every rational curve $C \subset F$ that is not $\alpha$-trivial.
Suppose that $K_F+\alpha$ is not pseudoeffective, then our goal is to show that $F$ is covered by $\alpha$-trivial rational curves.
Since $K_F+\alpha$ is not pseudoeffective, there exists an ample $\R$-divisor $H$ such that $K_F+\alpha+H$ is not pseudoeffective.
Since $H$ and $\alpha+H$ are ample we can 
choose effective $\R$-divisors $\Delta_H \sim_\R H$ and $\Delta \sim_\R \alpha+H$ such that the pairs $(F, \Delta_H)$
and $(F, \Delta)$ are klt.
 By \cite[Cor.1.3.3]{BCHM10} we can run a $K_F+\Delta$-MMP 
$$
(F, \Delta)=: (F_0, \Delta_0) \stackrel{\mu_0}{\dashrightarrow} (F_1, \Delta_1)  \stackrel{\mu_1}{\dashrightarrow}
\ldots  \stackrel{\mu_k}{\dashrightarrow} (F_k, \Delta_k),  
$$
that is for every $i \in \{0, \ldots, k-1\}$ the map
$\merom{\mu_i}{F_i}{F_{i+1}}$ is either a divisorial Mori contraction of a
$K_{F_i}+\Delta_i$-negative extremal ray $\Gamma_i$ in $\NE{X_i}$  or the flip of a small contraction 
of such an extremal ray. 
Note that for every $i \in \{0, \ldots, k \}$ the variety $F_i$  is normal $\Q$-factorial and the pair $(F_i, \Delta_i)$ is klt.
Moreover $F_k$ admits a Mori contraction of fibre type \holom{\psi}{F_k}{Y} contracting an extremal ray $\Gamma_k$
such that $(K_{F_k}+\Delta_k) \cdot \Gamma_k < 0$.

Set $\Delta_{H,0}:=\Delta_H, \alpha_0 :=\alpha$ and for all $i \in \{0, \ldots, k-1\}$ we define inductively
$$
\Delta_{H, i+1}:= (\mu_i)_* (\Delta_{H,i}), \ \alpha_{i+1} := (\mu_i)_* (\alpha_i).
$$
Note that for all $i \in \{0, \ldots, k\}$ we have
\begin{equation} \label{twopairs}
K_{F_i} + \Delta_i \equiv K_{F_i} + \Delta_{H,i} + \alpha_i. 
\end{equation}
We claim that for all $i \in \{0, \ldots, k\}$ the $\R$-divisor class $\alpha_i$ is nef and $\alpha_i \cdot \Gamma_i=0$.
Moreover the pairs $(X_i, \Delta_{H,i})$ are klt. Assuming this for the time being, let us see how to conclude:
since \holom{\psi}{F_k}{Y} is a Mori fibre space and the extremal ray $\Gamma_k$ is $\alpha_k$-trivial, we see that
$F_k$ is dominated by $\alpha_k$-trivial rational curves $(C_t)_{t \in T}$. 
A general member of this family of rational curves is not contained in the exceptional locus of $F_0 \dashrightarrow F_k$,
so the strict transforms define a dominant family of rational curves $(C_t')_{t \in T}$ of $F_0$.
Since all the birational contractions in the MMP $F_0 \dashrightarrow F_k$ are $\alpha_\bullet$-trivial, 
we easily see (cf. the proof of Corollary \ref{corollaryinvariance}) that
$$
\alpha \cdot C_t' = \alpha_k \cdot C_t =0.
$$

{\em Proof of the claim.} Since $\alpha_0$ is nef, we have
$$
0 > (K_{F_0} + \Delta_0) \cdot \Gamma_0 = (K_{F_0} + \Delta_{H,0} + \alpha_0) \cdot \Gamma_0 \geq (K_{F_0} + \Delta_{H,0})  
\cdot \Gamma_0.
$$
Thus the extremal ray $\Gamma_0$ is $K_{F_0} + \Delta_{H,0}$-negative, in particular the pair 
$(F_1, \Delta_1)$ is klt \cite[Cor.3.42, 3.43]{KM98}.
Moreover there exists by \cite[Thm.1]{Kaw91} a rational curve $[C_0] \in \Gamma_0$ such that
$(K_{F_0}+\Delta_{H,0}) \cdot C_0 \geq - 2 \dim F$. Thus if $\alpha_0 \cdot C_0 \neq 0$, the inequality \eqref{conditionbound2} implies that
$$
(K_{F_0} + \Delta_0) \cdot C_0 =  (K_{F_0} + \Delta_{H,0})  \cdot C_0 + \alpha_0 \cdot C_0
> 0.
$$
In particular the extremal ray $\Gamma_0$ is not $K_{F_0} + \Delta_0$-negative, a contradiction to our assumption. Thus
we have $\alpha_0 \cdot C_0 = 0$. By Corollary \ref{corollaryinvariance} this implies that $\alpha_1$ is nef and satisfies
the inequality \eqref{conditionbound2}. The claim now follows by induction on $i$.
\end{proof}

\begin{remark} \label{remarkMRC}
For the proof of Theorem \ref{theoremmain} we will use the MRC fibration of a uniruled manifold.
Since the original papers \cite{KMM92, Cam92} are formulated for projective manifolds, let us recall that for
a compact K\"ahler manifold $M$ that is uniruled the MRC fibration is defined as an almost holomorphic
map $\merom{f}{M}{N}$ such that the general fibre $F$ is rationally connected and the dimension of $F$ is maximal
among all the fibrations of this type. The existence of the MRC fibration follows, as in the projective case, from the existence 
of a quotient map for covering families \cite{Cam04b}.
The base $N$ is not uniruled : arguing by contradiction we consider a dominating family 
$(C_t)_{t \in T}$ of rational curves on $N$. Let $M_t$ be a desingularisation of $\fibre{f}{C_t}$ for a general $C_t$, then $M_t$ is a compact K\"ahler manifold with a fibration onto a curve $M_t \rightarrow C_t$ such that the general fibre is rationally connected.
In particular $H^0(M_t, \Omega_{M_t}^2)=0$ so $M_t$ is projective by Kodaira's criterion. Thus we can apply the Graber-Harris-Starr
theorem \cite{GHS03} to see that $M_t$ is rationally connected, a contradiction.
\end{remark}

\begin{proof}[Proof of Theorem \ref{theoremmain}]
Let $\omega$ be a K\"ahler class such that $\alpha:= K_X+\omega$ is nef and big, but not K\"ahler. By Theorem \ref{theoremuniruled}
there exists a subvariety $Z \subset X$ contained in the null-locus $\Null{\alpha}$ that is uniruled.
More precisely let $\pi: Z' \rightarrow Z$ be a desingularisation, and denote by $k$ the numerical dimension of $\alpha':=\pi^* \alpha|_Z$.
Then we know by Theorem \ref{theoremuniruled} that
$$
K_{Z'} \cdot \alpha'^{k} \cdot \pi^* \omega|_Z^{\dim Z-k-1} < 0. 
$$
Since $\alpha'^{k+1}=0$ this actually implies that
\begin{equation} \label{alwaysnegative}
(K_{Z'}+\lambda \alpha')  \cdot \alpha'^{k} \cdot \pi^* \omega|_Z^{\dim Z-k-1} < 0 \qquad \forall \ \lambda>0.
\end{equation}
Our goal is to prove that this implies that $Z$ contains a $K_X$-negative rational curve. Arguing by contradiction we  
suppose that $K_X \cdot C \geq 0$ for every rational curve $C \subset Z$. Since $\omega$ is a K\"ahler class this 
implies by Remark \ref{remarkbound} that there exists a $b>0$ such that for every rational curve $C \subset Z$ we have
\begin{equation} \label{positiveonrational}
\alpha \cdot C = (K_X+\omega) \cdot C \geq \omega \cdot C \geq b.
\end{equation}
By Lemma \ref{lemmainvariance}a) and Remark \ref{remarkkaehlercase} this implies that for
every rational curve $C' \subset Z'$ we have $\alpha' \cdot C' = 0$ or $\alpha' \cdot C' \geq b$.

Since $Z'$ is uniruled we can consider the MRC-fibration $\merom{f}{Z'}{Y}$ (cf. Remark \ref{remarkMRC}).
The general fibre $F$ is rationally connected, in particular we can consider $\alpha'|_F$ as a nef $\R$-divisor class. Moreover the inequality above shows that $\alpha'|_F$ 
satisfies the condition \eqref{conditionbound} in Proposition \ref{propositionboundedpseff}.
If $F$ is dominated by $\alpha'|_F$-trivial rational curves, then $Z'$ is dominated by $\alpha'$-trivial rational curves.
A general member of this dominating family is not contracted by $\pi$, so $Z$ is dominated by $\alpha$-trivial rational curves.
This possibility is excluded by \eqref{positiveonrational}, so 
Proposition \ref{propositionboundedpseff} shows that there exists 
a $\lambda>0$ such that $K_F+\lambda \alpha'|_F$ is pseudoeffective. 

We will now prove that $K_{Z'}+\lambda \alpha$ is pseudoeffective, which clearly contradicts \eqref{alwaysnegative}.
If $\holom{\nu}{Z''}{Z}$ is a resolution of the indeterminacies of $f$ such that $K_{Z''}+\nu^* (\lambda \alpha)$ is pseudoeffective,
then $K_{Z'} + \lambda \alpha = (\nu)_* (K_{Z''}+\nu^* (\lambda \alpha))$ is pseudoeffective. Thus we can assume without loss
of generality that the MRC-fibration $f$ is a holomorphic map.
Let $\omega'$ be a K\"ahler class on $Z'$, then
for every $\varepsilon>0$ the class $\lambda \alpha'+\varepsilon \omega$ is K\"ahler and 
$K_F+(\lambda \alpha+\varepsilon \omega)|_F$ is pseudoeffective. Thus we can apply Theorem \ref{theoremdirectimage2}
to $\holom{f}{Z'}{Y}$ to see that
$$
K_{Z'/Y}+\lambda \alpha+\varepsilon \omega
$$
is pseudoeffective. Note now that $Y$ has dimension at most $\dim X-2$ is not uniruled (Remark \ref{remarkMRC})
Since we assume that Conjecture \ref{conjectureBDPP}
holds in dimension up to $\dim X-1$, we obtain that $K_Y$ is pseudoeffective. Thus we see that
$K_{Z'}+\lambda \alpha+\varepsilon \omega$ is pseudoeffective for all $\varepsilon>0$. 
The statement follows by taking the limit $\varepsilon \to 0$. 
\end{proof}

\def\cprime{$'$}


\begin{thebibliography}{BCHM10}

\bibitem[Anc87]{Anc87}
Vincenzo Ancona.
\newblock Vanishing and nonvanishing theorems for numerically effective line
  bundles on complex spaces.
\newblock {\em Ann. Mat. Pura Appl. (4)}, 149:153--164, 1987.

\bibitem[Ara10]{Ara10}
Carolina Araujo.
\newblock The cone of pseudo-effective divisors of log varieties after
  {B}atyrev.
\newblock {\em Math. Z.}, 264(1):179--193, 2010.

\bibitem[BCHM10]{BCHM10}
Caucher Birkar, Paolo Cascini, Christopher~D. Hacon, and James McKernan.
\newblock Existence of minimal models for varieties of log general type.
\newblock {\em J. Amer. Math. Soc.}, 23(2):405--468, 2010.

\bibitem[BDPP13]{BDPP13}
S{\'e}bastien Boucksom, Jean-Pierre Demailly, Mihai P{\u a}un, and Thomas
  Peternell.
\newblock The pseudo-effective cone of a compact {K\"a}hler manifold and
  varieties of negative {K}odaira dimension.
\newblock {\em Journal of Algebraic Geometry}, 22:201--248, 2013.

\bibitem[BHN15]{a24}
Mauro~C. Beltrametti, Andreas H\"oring, and Carla Novelli.
\newblock Fano varieties with small non-klt locus.
\newblock {\em Int. Math. Res. Not. IMRN}, (11):3094--3120, 2015.

\bibitem[Bou04]{Bou04}
S{\'e}bastien Boucksom.
\newblock Divisorial {Z}ariski decompositions on compact complex manifolds.
\newblock {\em Ann. Sci. \'Ecole Norm. Sup. (4)}, 37(1):45--76, 2004.

\bibitem[BP08]{BP08}
Bo~Berndtsson and Mihai P{\u{a}}un.
\newblock Bergman kernels and the pseudoeffectivity of relative canonical
  bundles.
\newblock {\em Duke Math. J.}, 145(2):341--378, 2008.

\bibitem[BP10]{BP10}
Bo~Berndtsson and Mihai P{\u a}un.
\newblock {Bergman kernels and subadjunction}.
\newblock {\em ArXiv e-prints}, February 2010.

\bibitem[Bru06]{Bru06}
Marco Brunella.
\newblock A positivity property for foliations on compact {K}\"ahler manifolds.
\newblock {\em Internat. J. Math.}, 17(1):35--43, 2006.

\bibitem[Cam92]{Cam92}
Fr{\'e}d{\'e}ric Campana.
\newblock Connexit\'e rationnelle des vari\'et\'es de {F}ano.
\newblock {\em Ann. Sci. \'Ecole Norm. Sup. (4)}, 25(5):539--545, 1992.

\bibitem[Cam04]{Cam04b}
Fr{\'e}d{\'e}ric Campana.
\newblock Orbifolds, special varieties and classification theory: an appendix.
\newblock {\em Ann. Inst. Fourier (Grenoble)}, 54(3):631--665, 2004.

\bibitem[{Cao}14]{Cao14}
Junyan {Cao}.
\newblock {Ohsawa-Takegoshi extension theorem for compact K{\"a}hler manifolds
  and applications}.
\newblock {\em ArXiv e-prints}, April 2014.

\bibitem[CP90]{CP90}
Fr{\'e}d{\'e}ric Campana and Thomas Peternell.
\newblock Algebraicity of the ample cone of projective varieties.
\newblock {\em J. Reine Angew. Math.}, 407:160--166, 1990.

\bibitem[CT15]{CT13}
Tristan~C. Collins and Valentino Tosatti.
\newblock {K}{\"a}hler currents and null loci.
\newblock {\em Invent.Math.}, 202(3):1167--1198, 2015.

\bibitem[Dem]{Dem}
Jean-Pierre Demailly.
\newblock {\em Complex analytic and differential geometry}.
\newblock http://www-fourier.ujf-grenoble.fr/{$\sim$}demailly/documents.html.

\bibitem[Dem12]{Dem12}
Jean-Pierre Demailly.
\newblock {\em Analytic methods in algebraic geometry}, volume~1 of {\em
  Surveys of Modern Mathematics}.
\newblock International Press, Somerville, MA; Higher Education Press, Beijing,
  2012.

\bibitem[DP04]{DP04}
Jean-Pierre Demailly and Mihai P{\u a}un.
\newblock Numerical characterization of the {K}\"ahler cone of a compact
  {K}\"ahler manifold.
\newblock {\em Ann. of Math. (2)}, 159(3):1247--1274, 2004.

\bibitem[FG12]{FG12}
Osamu Fujino and Yoshinori Gongyo.
\newblock On canonical bundle formulas and subadjunctions.
\newblock {\em Michigan Math. J.}, 61(2):255--264, 2012.

\bibitem[FM00]{FM00}
Osamu Fujino and Shigefumi Mori.
\newblock A canonical bundle formula.
\newblock {\em J. Differential Geom.}, 56(1):167--188, 2000.

\bibitem[GHS03]{GHS03}
Tom Graber, Joe Harris, and Jason Starr.
\newblock Families of rationally connected varieties.
\newblock {\em J. Amer. Math. Soc.}, 16(1):57--67 (electronic), 2003.

\bibitem[{Gue}16]{Gue16}
H.~{Guenancia}.
\newblock {Families of conic K$\backslash$``ahler-Einstein metrics}.
\newblock {\em ArXiv e-prints}, May 2016.

\bibitem[GZ15]{GZ15}
Qi'an Guan and Xiangyu Zhou.
\newblock A solution of an $l^2$ extension problem with an optimal estimate and
  applications.
\newblock {\em Annals of Mathematics}, 181:1139--1208, 2015.

\bibitem[Har77]{Har77}
Robin Hartshorne.
\newblock {\em Algebraic geometry}.
\newblock Springer-Verlag, New York, 1977.
\newblock Graduate Texts in Mathematics, No. 52.

\bibitem[Har80]{Ha80}
Robin Hartshorne.
\newblock Stable reflexive sheaves.
\newblock {\em Math. Ann.}, 254(2):121--176, 1980.

\bibitem[HM05]{HM05}
Christopher Hacon and James McKernan.
\newblock On the existence of flips.
\newblock {\em arXiv preprint}, 0507597, 2005.

\bibitem[HM07]{HM07}
Christopher~D. Hacon and James Mckernan.
\newblock On {S}hokurov's rational connectedness conjecture.
\newblock {\em Duke Math. J.}, 138(1):119--136, 2007.

\bibitem[HP16]{HP16}
Andreas H{\"o}ring and Thomas Peternell.
\newblock Minimal models for {K}\"ahler threefolds.
\newblock {\em Invent. Math.}, 203(1):217--264, 2016.

\bibitem[Kaw91]{Kaw91}
Yujiro Kawamata.
\newblock On the length of an extremal rational curve.
\newblock {\em Invent. Math.}, 105(3):609--611, 1991.

\bibitem[Kaw98]{Kaw98}
Yujiro Kawamata.
\newblock Subadjunction of log canonical divisors. {II}.
\newblock {\em Amer. J. Math.}, 120(5):893--899, 1998.

\bibitem[KK83]{Kau83}
Ludger Kaup and Burchard Kaup.
\newblock {\em Holomorphic functions of several variables}, volume~3 of {\em de
  Gruyter Studies in Mathematics}.
\newblock Walter de Gruyter \& Co., Berlin, 1983.

\bibitem[KM98]{KM98}
J{\'a}nos Koll{\'a}r and Shigefumi Mori.
\newblock {\em Birational geometry of algebraic varieties}, volume 134 of {\em
  Cambridge Tracts in Mathematics}.
\newblock Cambridge University Press, Cambridge, 1998.
\newblock With the collaboration of C. H. Clemens and A. Corti.

\bibitem[KMM92]{KMM92}
Janos Koll\'ar, Yoichi Miyaoka, and Shigefumi Mori.
\newblock Rational connectedness and boundedness of {F}ano manifolds.
\newblock {\em J. Diff. Geom. 36}, pages 765--769, 1992.

\bibitem[Lae02]{Lae02}
Laurent Laeng.
\newblock {\em {Estimations spectrales asymptotiques en g{\'e}om{\'e}trie
  hermitienne}}.
\newblock Theses, {Universit{\'e} Joseph-Fourier - Grenoble I}, October 2002.
\newblock https://tel.archives-ouvertes.fr/tel-00002098.

\bibitem[Laz04]{Laz04a}
Robert Lazarsfeld.
\newblock {\em Positivity in algebraic geometry. {I}}, volume~48 of {\em
  Ergebnisse der Mathematik und ihrer Grenzgebiete.}
\newblock Springer-Verlag, Berlin, 2004.
\newblock Classical setting: line bundles and linear series.

\bibitem[MM86]{MM86}
Yoichi Miyaoka and Shigefumi Mori.
\newblock A numerical criterion for uniruledness.
\newblock {\em Ann. of Math. (2)}, 124(1):65--69, 1986.

\bibitem[Mor79]{Mor79}
Shigefumi Mori.
\newblock Projective manifolds with ample tangent bundles.
\newblock {\em Ann. of Math. (2)}, 110(3):593--606, 1979.

\bibitem[Mor82]{Mor82}
Shigefumi Mori.
\newblock Threefolds whose canonical bundles are not numerically effective.
\newblock {\em Ann. of Math. (2)}, 116(1):133--176, 1982.

\bibitem[Nak87]{Nak87}
Noboru Nakayama.
\newblock The lower semicontinuity of the plurigenera of complex varieties.
\newblock In {\em Algebraic geometry, {S}endai, 1985}, volume~10 of {\em Adv.
  Stud. Pure Math.}, pages 551--590. North-Holland, Amsterdam, 1987.

\bibitem[P{\u a}u12a]{Pau12}
Mihai P{\u a}un.
\newblock Relative adjoint transcendental classes and {A}lbanese maps of
  compact {K}{\"a}hler manifolds with nef {R}icci curvature.
\newblock {\em arXiv preprint}, 1209.2195, 2012.

\bibitem[P{\u{a}}u12b]{Pau12b}
Mihai P{\u{a}}un.
\newblock Relative critical exponents, non-vanishing and metrics with minimal
  singularities.
\newblock {\em Invent. Math.}, 187(1):195--258, 2012.

\bibitem[Rei94]{Rei94}
Miles Reid.
\newblock Nonnormal del {P}ezzo surfaces.
\newblock {\em Publ. Res. Inst. Math. Sci.}, 30(5):695--727, 1994.

\bibitem[Sch12]{Sch12}
Georg Schumacher.
\newblock Positivity of relative canonical bundles and applications.
\newblock {\em Invent. Math.}, 190(1):1--56, 2012.

\bibitem[Tak06]{Tak06}
Shigeharu Takayama.
\newblock Pluricanonical systems on algebraic varieties of general type.
\newblock {\em Invent. Math.}, 165(3):551--587, 2006.

\bibitem[Tak08]{Tak08}
Shigeharu Takayama.
\newblock On the uniruledness of stable base loci.
\newblock {\em J. Differential Geom.}, 78(3):521--541, 2008.

\bibitem[Yi14]{Yi14}
Li~Yi.
\newblock An {O}hsawa-{T}akegoshi theorem on compact {K}\"ahler manifolds.
\newblock {\em Sci. China Math.}, 57(1):9--30, 2014.

\end{thebibliography}
\end{document}